\documentclass[11pt,reqno]{amsart}
\usepackage{amsmath,amscd,amssymb,amsfonts,amsthm,courier,relsize,bm}
\usepackage{hyperref,enumerate,mathrsfs,mathtools,slashed}
\usepackage[backend=bibtex,style=alphabetic,maxbibnames=10]{biblatex} 
\renewbibmacro{in:}{} 
\usepackage{xcolor}  
\hypersetup{
    colorlinks,
    linkcolor={red!50!black},
    citecolor={blue!70!black},
    urlcolor={blue!80!black}
}

\usepackage{tikz-cd}

\newtheorem{theorem}{Theorem}[section]
\newtheorem{corollary}[theorem]{Corollary}
\newtheorem{proposition}[theorem]{Proposition}

\newtheorem{lemma}[theorem]{Lemma}
\theoremstyle{definition}    
\newtheorem{definition}[theorem]{Definition}
\theoremstyle{remark}

\newtheorem{remark}[theorem]{Remark}

\newtheorem{example}[theorem]{Example}

\newcommand{\ignore}[1]{}
\newcommand{\ol}[1]{\overline{#1}}

\newcommand{\ti}[1]{\widetilde{#1}}

\newcommand{\mf}[1]{\mathfrak{#1}}

\newcommand{\tn}[1]{\textnormal{#1}}

\def\d{\ensuremath{\mathrm{d}}}

\def\A{\ensuremath{\mathcal{A}}}

\def\D{\ensuremath{\mathcal{D}}}

\def\I{\ensuremath{\mathcal{I}}}
\def\J{\ensuremath{\mathcal{J}}}

\def\M{\ensuremath{\mathcal{M}}}
\def\N{\ensuremath{\mathcal{N}}}

\def\R{\ensuremath{\mathcal{R}}}
\def\S{\ensuremath{\mathcal{S}}}

\def\U{\ensuremath{\mathcal{U}}}

\def\bR{\ensuremath{\mathbb{R}}}

\def\bZ{\ensuremath{\mathbb{Z}}}

\def\Hom{\ensuremath{\textnormal{Hom}}}

\def\supp{\ensuremath{\textnormal{supp}}}

\def\ev{\ensuremath{\textnormal{ev}}}

\def\pr{\ensuremath{\textnormal{pr}}}
\def\dim{\ensuremath{\textnormal{dim}}}
\def\Sym{\ensuremath{\textnormal{Sym}}}

\def\gr{\ensuremath{\textnormal{gr}}}

\def\gr{\ensuremath{\tn{gr}}}

\def\Spec{\ensuremath{\tn{Spec}}}

\title{Weightings and Lie groupoids}
\author{Yiannis Loizides}
\begin{document}
\sloppy
\maketitle

\begin{abstract}
This note originated with a mini-course taught at KU Leuven by the author in June 2026. We review the definition of weightings (or quasi-homogeneous structures), the weighted normal bundle, and the weighted deformation space. We emphasize the algebraic approach and functorial properties, using sheaves of ideals and the character spectrum for sheaves of algebras. We review the definition of a multiplicative weighting on a Lie groupoid $G\rightrightarrows M$, and give a self-contained proof of the three-way equivalence between (i) multiplicative weightings of $G$ along $M$, (ii) Lie filtrations of the Lie algebroid $A$ of $G$, (iii) dg weightings of the NQ manifold $A[1]$ along $M$. The corresponding weighted deformation space is a weighted variant of the tangent/adiabatic groupoid and is used in the study of hypoelliptic operators.
\end{abstract}

\tableofcontents

\section{Introduction}
Weightings of manifolds were introduced by Meinrenken and the author in \cite{LMweightings}. Quasi-homogeneous structures on manifolds, introduced by Melrose in unpublished lecture notes \cite{melrose-corners-notes} and further studied by Behr \cite{behr-quasi-homog}, is an equivalent notion. Weightings globalize, to the setting of manifolds, the technique of assigning weights to local coordinates, typically in a manner adapted to some geometric application. A weighting of a manifold $M$ along a submanifold $N$ is a coherent way of assigning weights in $\bZ_{>0}$ to local coordinates that vanish on the submanifold $N$: the `standard weighting' corresponds to setting all of these weights equal to $1$; other weightings result from assigning weights larger than $1$ to some coordinates. A weighting determines a filtration of the normal bundle $\nu(M,N)$ to $N$ by subbundles, but is not completely determined by this data; further non-linear information is involved. Applications of weightings and closely related weighted Euler-like vector fields include to normal form theorems \cite{meinrenken2021euler}, singular symplectic quotients \cite{zimhony2024commutative}, linearization of vector fields \cite{qiu-weighted-linearization}, configuration spaces \cite{gootjes-dreesbach}, functorial resolution of singularities by weighted blow-ups (cf. \cite{brais2025streamlining} and references therein), and resolutions of Poisson structures \cite{lapointe2026weighted}. 

One of the original motivations for introducing weightings in \cite{LMweightings}, and pursued further in \cite{LMsingularlie}, was to better understand and generalize constructions appearing in the study of filtered manifolds. A \emph{filtered manifold} is a manifold $M$ equipped with a filtration $0_M\subset A_1\subset A_2\subset \cdots \subset A_r=TM$ of its tangent bundle by subbundles, such that $[\Gamma(A_i),\Gamma(A_j)]\subset \Gamma(A_{i+j})$; we will refer to $A_\bullet$ as a \emph{Lie filtration} of $TM$. An example of a Lie filtration with $r=2$ appears in contact geometry, with $A_1=H$ the contact hyperplane distribution and $A_2=TM$. More generally, filtered manifolds appear in sub-Riemannian geometry (cf. \cite{bellaiche1996tangent}), with $A_1\subset TM$ a distribution such that for all $k>0$, iterated Lie brackets of sections of $A_1$ of depth $\le k$ span a subbundle $A_k\subset TM$, and moreover there is some $r<\infty$ such that $A_r=TM$ (i.e. $A_1$ is an equiregular bracket-generating distribution). Filtrations also appear in the study of hypoelliptic differential operators, where the first-order differential operators associated to vector fields in $\Gamma(A_k)$ are typically assigned a `filtered order' $k$ (instead of $1$), making filtrations a natural setting for studying certain hypoelliptic (but not elliptic) differential operators given as a sum of terms of varying orders.

A Lie filtration $A_\bullet$ of $TM$ determines a bundle of simply connected nilpotent Lie groups over $M$ called the \emph{osculating bundle} of the filtration. In the context of the study of hypoelliptic operators, the appropriate (co-)symbol of the operator is a smooth family of left-invariant differential operators on the fibers of the osculating bundle, and the representation theory of the fibers plays a role in elucidating properties of the operators; an important example is the celebrated Rockland condition, which relates the Fredholm property of an operator to a non-commutative analogue of ellipticity for its symbol. Another important construction in this context is the filtered analogue of Connes' tangent groupoid, a manifold that smoothly interpolates between $M\times M$ (the home of the Schwartz kernel of an operator) and the osculating bundle (the home of its symbol). In the unfiltered setting, Debord-Skandalis \cite{debord2014adiabatic} used the tangent groupoid to give a conceptual approach to pseudo-differential calculus; likewise the filtered analogue of the tangent groupoid plays an important role in the study of non-standard pseudo-differential calculi adapted to the study of hypoelliptic operators in various contexts, cf. the work of van Erp-Yuncken \cite{van2019groupoid}. A sample of other works in this direction include \cite{HigsonEulerLike, choi2019privileged, choi2019tangent, androulidakis2022pseudodifferential, mohsen2026microlocal}. In the special case that the filtration comes from a contact structure, there was an earlier significant literature on the corresponding pseudo-differential calculus, known as the Heisenberg calculus, cf. \cite{van2019groupoid} for further discussion and references.

In an earlier paper \cite{LMsingularlie}, Meinrenken and the author showed that a filtration of $M$ canonically determines a weighting of $M\times M$ along the diagonal $M$. There are weighted variants of several useful constructions in differential and algebraic geometry: weighted normal bundle, weighted deformation to the normal cone, weighted projective spaces and blow-ups. In this context, the weighted normal bundle $\nu_w(M\times M,M)$ recovers the osculating bundle, and the weighted deformation to the normal cone $\D_w(M\times M,M)$ recovers the filtered analogue of Connes' tangent groupoid mentioned above. The setting of weightings of manifolds provides a conceptual framework in which these and other constructions find a natural home. For instance, manifolds equipped with weightings and weighted morphisms form a category, and the weighted normal bundle and weighted deformation to the normal cone constructions are functorial.

Lie filtrations can be defined more generally for a Lie algebroid $A$ over a smooth manifold $M$. This is a natural direction of generalization for applications to the study of hypoelliptic operators since Lie algebroids can be used for instance to study operators on non-compact manifolds, compactified to manifolds with fibered boundary or corners, cf. \cite{melrose1993atiyah}. Lie filtrations of Lie algebroids are also discussed in \cite{van2019groupoid}. In this case the pair groupoid $\tn{Pair}(M)=M\times M$ above is replaced by a Lie groupoid $G\rightrightarrows M$ with Lie algebroid $A$. It then becomes natural to study weightings of a Lie groupoid $G$ along its submanifold of units $M$. 

Let $G\rightrightarrows M$ be a Lie groupoid with Lie algebroid $A$. A \emph{multiplicative weighting} of $G$ along $M$ is a weighting of $G$ along $M$ such that the groupoid multiplication $m\colon G\times_M G\rightarrow G$ is a weighted morphism with respect to a naturally induced weighting on the space of composable arrows $G\times_M G$. There is a one-one correspondence between multiplicative weightings of $G$ along $M$ and Lie filtrations of $A$. This was studied for $G=\tn{Pair}(M)$ in \cite{LMsingularlie} and generalized to arbitrary Lie groupoids by Dan Hudson in his PhD thesis \cite[Theorem 5.28]{hudson2024multiplicative} (see also discussion of related results by Meinrenken in \cite{meinrenken2026introduction}); in fact Hudson obtained a more general result in the setting of a Lie groupoid with a multiplicative weighting along a wide Lie subgroupoid. We give a different, self-contained proof of this result of Hudson, in the special case of a weighting of $G$ along $M$. This is the case with a direct relationship to the deformation spaces studied by van Erp-Yuncken \cite{van2019groupoid}, the filtered analogue of Connes' tangent groupoid important in the study of hypoelliptic operators. We also add a further one-one correspondence with dg weightings of the NQ manifold $A[1]$:
\begin{theorem}
\label{t:three-way-equiv}
Let $G\rightrightarrows M$ be a Lie groupoid with Lie algebroid $A$. There is a canonical one-one correspondence between the following:
\begin{enumerate}[1.]
\item multiplicative weightings $\I_\bullet$ of $G$ along $M$,
\item Lie filtrations $A_\bullet$ of $A$, and
\item dg weightings $\I^{\Omega_A}_\bullet$ of the NQ manifold $A[1]$ along $M$.
\end{enumerate}
\end{theorem}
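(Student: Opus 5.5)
I will establish the two correspondences $1\leftrightarrow 2$ and $2\leftrightarrow 3$ separately, with the Lie filtration $A_\bullet$ serving as the hub.

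\textbf{Step 1: from a weighting to a filtration.} For any weighting $\I_\bullet$ of $G$ along $M$, the weighting determines a filtration of the normal bundle $\nu(G,M)\cong A$ by subbundles. I take $A_{-i}$, or in the paper's convention $A_i$, to be the annihilator of the linear parts of functions in $\I_{i+1}$; equivalently, $A_i$ consists of those $\xi$ with $\xi(f)=0$ for $f\in\I_{i+1}$, viewing $\xi$ as a derivation along $M$ via right-invariant vector fields. This gives maps $1\to 2$ and $3\to 2$. For the dg weighting, the filtration is read off from the weights that $\I^{\Omega_A}_\bullet$ assigns to the degree-one generators $\Gamma(A^*)\subset\Omega_A^1$. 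In both cases I must check involutivity:
\[
[\Gamma(A_i),\Gamma(A_j)]\subset\Gamma(A_{i+j}).
\]
For the groupoid side I will use that the multiplication $m$ is weighted. Concretely, I write the bracket of right-invariant vector fields as a second-order term in the Taylor expansion of $m$: the commutator of flows $\exp(t\xi)\exp(s\eta)\exp(-t\xi)\exp(-s\eta)$ has $ts$-coefficient $[\xi,\eta]$. Weightedness of $m$, together with the induced weighting on $G\times_M G$, then forces this coefficient to have filtration degree $\le i+j$. For the dg side, involutivity is immediate from the Cartan formula for the differential $d_A$ on degree-one elements together with $d_A$ preserving the filtration $\I^{\Omega_A}_\bullet$.

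\textbf{Step 2: from a filtration to a weighting.} Given $A_\bullet$, the dg weighting is constructed explicitly. I let $\I^{\Omega_A}_k$ be the ideal of $\Omega_A$ generated by products $f\,\alpha_1\cdots\alpha_p$ whose total weight, that is, the sum over the $\alpha$'s of the minimal $i$ with $\alpha|_{A_{i-1}}=0$, is $\ge k$, together with the relevant function parts of degree zero. Preservation under $d_A$ reduces, by the Leibniz rule, to generators. On those it follows from the Koszul formula and the bracket condition. Uniqueness holds because a dg weighting is determined by its values on generators in degrees $0$ and $1$. This gives $2\leftrightarrow 3$ as mutually inverse maps.

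For $2\to 1$, I will use the local description of a weighting by weighted coordinates. I choose a local splitting $A=\bigoplus \mathrm{gr}_i A$ and use an exponential map $A\supset U\to G$, built from a connection and the flow of right-invariant vector fields, to define coordinates on $G$ near $M$. I declare the coordinates dual to $\mathrm{gr}_iA$ to have weight $i$. The key claim is that the resulting weighting is independent of choices. Equivalently, I can define $\I_k$ intrinsically as the set of functions $f$ such that
\[
(\xi_1\cdots\xi_p f)|_M=0 \quad\text{whenever } \textstyle\sum \mathrm{wt}(\xi_j)<k,
\]
where the $\xi_j$ are right-invariant vector fields with $\xi_j\in\Gamma(A_{w_j})$. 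This intrinsic definition, which mirrors the construction in \cite{LMsingularlie} for the pair groupoid, makes canonicity automatic. The remaining work is to check that it is a weighting, locally generated by weighted coordinates; I will do this using the exponential chart and the fact that products of right-invariant vector fields span the differential operators. I then check multiplicativity by applying the same criterion on $G\times_M G$, using that $m^*$ intertwines right-invariant fields on the second factor.

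\textbf{Main obstacle.} The hardest part is the final check in $2\to 1$. One has to show that the filtration-by-orders definition genuinely produces a weighting, with the correct normal form in coordinates, and that it is the unique multiplicative weighting inducing $A_\bullet$. For uniqueness, I plan to use Step 1's computation in reverse. Any multiplicative weighting makes $m$ weighted, so each right-invariant $\xi\in\Gamma(A_i)$ lowers filtration degree by at most $i$. This yields an inclusion into the intrinsic filtration. Equality then follows by comparing associated graded rank at $M$: both weightings are weightings with the same filtered normal bundle $\nu(G,M)=A$ with $A_\bullet$, and an inclusion of weightings with the same weighted normal bundle is an equality.
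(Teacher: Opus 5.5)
Your architecture matches the paper's. The Lie filtration is the hub. The correspondence $2\leftrightarrow 3$ goes via the formula for $\d_A$ on an adapted coframe, which is fine and is exactly the paper's argument. The direction $2\to 1$ goes via vanishing along $M$ of invariant differential operators of low $A_\bullet$-order, and uniqueness via an inclusion $\I_k\subset\I'_k$ into your intrinsic ideals $\I'_\bullet$ plus equality of weight sequences (Proposition \ref{p:criterion-for-wt-chart}). However, the two substantive steps are missing.

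\textbf{First gap: the order bound for a multiplicative weighting.} This is your sentence ``any multiplicative weighting makes $m$ weighted, so each right-invariant $\xi\in\Gamma(A_i)$ lowers filtration degree by at most $i$.'' It is the key lemma of the direction $1\to 2$ (part a) of Theorem \ref{t:mult-wt-Lie-filt}). It is not a formal consequence of $m$ being weighted, and ``Step 1's computation in reverse'' cannot produce it. The commutator only yields the first-order statement $[\xi,\eta]f|_M=0$ for $f\in\I_{i+j+1}$, whereas you need $\xi^R\I_l\subset\I_{l-i}$ for every $l$. Moreover, as written with exponentials, Step 1 presupposes the lemma. To feed $\exp(t\xi)\exp(s\eta)\exp(-t\xi)\exp(-s\eta)$ into the weightedness of $m$, you must know that the flows are weighted maps with $t,s$ of weights $i,j$. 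That means $(\xi^R)^af|_M=0$ whenever $f\in\I_l$ and $ai<l$, a higher-order condition that $\xi\in\Gamma(A_i)$ does not give.

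The paper supplies a real argument. It differentiates $m^*f=\sum_{l+l'\ge j}c_{ll'}\,\pi_1^*f_l\,\pi_2^*f'_{l'}$ along a curve through the units in one factor. It replaces the invariant field along $M$ by a local field $\hat X$, built in weighted coordinates, with $\hat X\I_l\subset\I_{l-i}$. Finally it uses that $u\circ s$ is weighted.

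A variant also works. In weighted coordinates adapted to the weighted submersion $s$, take a family $\Gamma(\epsilon,y)\in s^{-1}(y)$ through the units with velocity $\xi(y)$, weighted for $\epsilon$ of weight $i$. Then $(\epsilon,g)\mapsto f(m(\Gamma(\epsilon,t(g)),g))$, with $t$ the target map, lies in the order-$l$ ideal of $\bR\times G$. Its $\epsilon$-derivative at $\epsilon=0$ is $\xi^Rf$, which therefore lies in $\I_{l-i}$. Once the lemma is in hand, involutivity is immediate, since $[\xi^R,\eta^R]$ has $\I_\bullet$-order at most $i+j$; the flow commutator is not needed.

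\textbf{Second gap: where the Lie filtration hypothesis enters $2\to 1$.} This is the step you flag yourself, but your sketch never says where $[\A_i,\A_j]\subset\A_{i+j}$ is used, and it is indispensable. If $r=3$ and $[\A_1,\A_1]\not\subset\A_2$, then every $f\in\I'_3$ has $\d f|_M$ annihilating $A_2+[\A_1,\A_1]|_M$. So the filtration of $\nu(G,M)$ induced by $\I'_\bullet$ is not $A_\bullet$, and $\I'_\bullet$ need not be a weighting at all.

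The needed input is a filtered PBW statement. Every element of $\U\A$ of $A_\bullet$-order at most $k$ is a $C^\infty_M$-combination of standard (or symmetrized) monomials of weight at most $k$ in an adapted local frame $(X_a)$. The commutator error terms have order at most $k$ precisely because $A_\bullet$ is a Lie filtration. The fact that products of invariant vector fields span all differential operators is not enough.

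With this lemma, your exponential chart gives a pleasant alternative to the paper's inductive correction of coordinates. The chart must be $(x,v)\mapsto$ the time-one flow of $\sum_a v^aX_a^R$ from the unit $x$, with constant coefficients $v^a$ in the adapted frame. The geodesic exponential of an arbitrary connection has correction terms that need not respect $A_\bullet$. With the right chart, $\partial_v^\mu|_{v=0}$ acts at $x$ as the symmetrized monomial in the $X_a^R$, of $A_\bullet$-order at most $w\cdot\mu$. Taylor's theorem then gives $\I'_k\subset(v^\mu\mid w\cdot\mu\ge k)$, while PBW gives $v^a\in\I'_{w_a}$, hence equality.

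\textbf{A smaller point on multiplicativity.} Invariant fields on one factor cannot control $\I^{(2)}_\bullet$. You need right-invariant fields on one factor and left-invariant fields on the other, both tangent to $G^{(2)}$ and intertwined by $m$. Hence you also need that the right- and left-invariant descriptions of $\I'_\bullet$ agree (Remark \ref{r:left-right}).
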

In brief the equivalence $2\leftrightarrow 3$ is the duality between structures on $A$ compatible with the Lie bracket and anchor, and structures on the Chevalley-Eilenberg-de Rham complex $(\Omega_A,\d_A)$ compatible with the differential. The construction $1\rightarrow 2$ obtains $A_k$ as the $k$-th filtered piece $\nu(G,M)_{(k)}$ for the filtration of the normal bundle $\nu(G,M)=A$ associated to the weighting. The construction $2\rightarrow 1$ obtains a weighting $\I_\bullet$ by first using the Lie filtration $A_\bullet$ to introduce a filtered notion of order (called `$A_\bullet$-order') for left-invariant differential operators $P^L$ on $G$, and then setting $\I_k$ to be smooth functions $f$ such that $P^L f|_M=0$ for all $P^L$ with $A_\bullet$-order less than $k$. Along the way to Theorem \ref{t:three-way-equiv} we prove that the weighted deformation space functor $\D_w$, from the category of weighted manifolds to the category of $\bR^\times$-manifolds$/\bR$, is fully faithful (Theorem \ref{t:full-faithful}); this supports the point of view that weightings isolate the minimal structure relevant to the study of weighted deformation spaces.

The contents of the article are as follows. Section 2 discusses the character spectrum of an algebra over $\bR$, drawing inspiration from \cite{HigsonEulerLike}. We observe that the spectrum construction defines a functor from algebras to differential spaces (in the sense of Sikorski), provide a criterion for the spectrum to be a manifold (Proposition \ref{p:char-spec}), and generalize to sheaves of algebras. Section 3 recalls the definition of weightings of manifolds and describes their basic properties from \cite{LMweightings}. Sections 4 and 5 introduce the weighted normal bundle and weighted deformation to the normal cone, using the character spectrum to give a coordinate-free approach as in \cite{HigsonEulerLike, LMweightings}, although our presentation here puts a little more emphasis on the functorial properties of the spectrum construction for sheaves of algebras. Section 6 discusses weightings of fiber products of manifolds, as preparation for Section 7 where the definition of multiplicative weightings of Lie groupoids is recalled, and the weighted normal bundle/weighted deformation spaces of a Lie groupoid with a multiplicative weighting along its units are shown to themselves be Lie groupoids. Section 8 proves the correspondence $2\leftrightarrow 3$ of Theorem \ref{t:three-way-equiv}; Sections 9 and 10 establish $2\rightarrow 1$ and $1\rightarrow 2$ respectively.

\bigskip
\noindent \textbf{Acknowledgements.} I thank Eckhard Meinrenken, Dan Hudson, Nigel Higson, and Ahmad Sadegh for helpful conversations. Much of the material in this note originated with a mini-course taught at KU Leuven by the author in June 2026. I thank the organizers Marco Zambon, Miquel Cueca, and Zan Grad for the opportunity, and the participants for feedback and discussions. The mini-course also featured related material on weighted Euler-like vector fields and normal form theorems, but I decided to omit these here, as there are already many references for the latter material in the literature, cf. \cite{bursztyn2019splitting, HigsonEulerLike, bischoff2020deformation, meinrenken2021euler}. I thank Boris Zupancic for pointing out the references \cite{brais2025streamlining, lapointe2026weighted}. ChatGPT was used to proof-read the final draft and caught some minor errors and omissions; the author is responsible for any remaining errors.

\section{Spectrum in differential geometry}
In this section we review the definition of the character spectrum of an algebra, and more generally sheaves of algebras. The character spectrum has a natural differential space structure (in the sense of Sikorski). We describe a criterion for the spectrum to be a manifold, which is a variation of an analogous criterion that appeared in \cite[Lemma 2.4]{HigsonEulerLike}.

Let $A$ be a commutative algebra over $\bR$. 
\begin{definition}
A \emph{character} (or $\bR$-\emph{point}) $p$ of $A$ is a homomorphism $p\colon A\rightarrow \bR$.
\end{definition}
Consider the set
\[ \Spec(A):=\Hom_{\tn{alg}}(A,\bR) \]
of all characters of $A$. Note that each $a\in A$ determines a function
\[ a\colon \tn{Spec}(A)\rightarrow \bR, \qquad a(p)=p(a).\]
We equip $\tn{Spec}(A)$ with the topology generated by the sets $a^{-1}(J)$ where $a \in A$ and $J\subset \bR$ is open, i.e. the coarsest topology making the functions $a \in A$ continuous. If $p_1,p_2$ are distinct characters then by definition there is some $a \in A$ such that $p_1(a)\ne p_2(a)$, and in particular it follows that $\tn{Spec}(A)$ is Hausdorff.

We endow $S=\tn{Spec}(A)$ with a sheaf of functions $C^\infty_S$ defined as follows. Let $V\subset S$ be open. Then $C^\infty_S(V)$ is defined to be the collection of all functions $f\colon V\rightarrow \bR$ such that for each $p \in V$ there exists an open neighborhood $U$ of $p$, an integer $k\ge 1$, algebra elements $a_1,...,a_k\in A$, and a smooth $\bR$-valued function $g$ defined on a neighborhood of the image of $(a_1|_U,...,a_k|_U)\colon U\rightarrow \bR^k$, such that $f|_U=g(a_1|_U,...,a_k|_U)$. Then $C^\infty_S$ is a subsheaf of the sheaf of continuous $\bR$-valued functions on $S$ closed under post-composition with $C^\infty$-functions: for any $k\ge 1$, $U\subset S$ open, $f_1,...,f_k\in C^\infty_S(U)$, and any $g\in C^\infty(\bR^k)$, the function $g(f_1,...,f_k)\in C^\infty_S(U)$. Conversely a topological space $S$ equipped with a sub-sheaf $C^\infty_S$ of the sheaf of continuous $\bR$-valued functions such that 
\begin{itemize}
\item the topology on $S$ is the weakest such that the elements of $C^\infty_S$ are continuous, and 
\item $C^\infty_S$ is closed under post-composition with $C^\infty$-functions,
\end{itemize}
is known as a \emph{differential space} (in the sense of Sikorski), cf. \cite[Definition 2.48]{lerman2024differential}. A morphism of differential spaces is a continuous map $f\colon S_1\rightarrow S_2$ such that $f^*C^\infty_{S_2}\subset C^\infty_{S_1}$. The obvious functor $M\mapsto (M,C^\infty_M)$ from the category of manifolds to the category of differential spaces is fully faithful.

\begin{definition}
The \emph{character spectrum} $(S=\Spec(A),C^\infty_S)$ of $A$ is the differential space whose underlying set 
\[ S=\tn{Spec}(A)=\Hom_{\tn{alg}}(A,\bR),\] 
is the set of all characters of $A$, equipped with the topology and the sheaf $C^\infty_S$ defined above. We will often call $\Spec(A)$ simply the \emph{spectrum} of $A$ for short, as we will not consider any other notion of spectrum. Local sections of $C^\infty_S$ are referred to as \emph{smooth functions} and $C^\infty_S$ is referred to as the \emph{sheaf of smooth functions}.
\end{definition}
\begin{example}
\label{ex:milnor-exercise}
For any smooth manifold $M$, the spectrum $\tn{Spec}(C^\infty(M))=\Hom_{\tn{alg}}(C^\infty(M),\bR)\simeq M$, the isomorphism sends $p\in M$ to the algebra homomorphism $f\in C^\infty(M)\mapsto f(p)\in \bR$. This fact is sometimes referred to as `Milnor's exercise', cf. \cite[Corollary 35.9]{kolar2013natural}. As an example application, if $U$ is an open subset of $\bR^n$, then an algebra homomorphism $u\colon C^\infty(U)\rightarrow \bR$ is uniquely determined by its values on the $n$ coordinate functions $(u(x^1),...,u(x^n))\in \bR^n$, and moreover the elements of $\bR^n$ that occur in this way are precisely those that lie in the open subset $U$.
\end{example}

\begin{proposition} 
$\Spec(-)=\Hom_{\tn{alg}}(-,\bR)$ extends to a contravariant functor from the category of $\bR$-algebras to the category of differential spaces.
\end{proposition}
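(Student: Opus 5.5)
Given an algebra homomorphism $\phi\colon A\rightarrow B$, the plan is to define the induced map by precomposition,
\[ \phi^*=\Spec(\phi)\colon \Spec(B)\rightarrow \Spec(A),\qquad q\mapsto q\circ \phi .\]
This is well defined because a composition of algebra homomorphisms is an algebra homomorphism, so $q\circ\phi$ is again a character of $A$. Functoriality is then formal: $(\psi\circ\phi)^*=\phi^*\circ\psi^*$ and $\mathrm{id}^*=\mathrm{id}$ hold by associativity of composition. What remains is to show that $\phi^*$ is a morphism of differential spaces, i.e. that it is continuous and pulls smooth functions back to smooth functions.

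The key identity, used in both steps, is that for $a\in A$ the function $a\colon\Spec(A)\rightarrow\bR$ satisfies
\[ a\circ\phi^*=\phi(a) \]
as functions on $\Spec(B)$. Indeed, $(a\circ\phi^*)(q)=q(\phi(a))$.

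\emph{Continuity.} The topology on $\Spec(A)$ is generated by the sets $a^{-1}(J)$, so it suffices to check preimages of these. We have $(\phi^*)^{-1}(a^{-1}(J))=\phi(a)^{-1}(J)$, which is open in $\Spec(B)$ by the definition of its topology.

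\emph{Smoothness.} Let $V\subset\Spec(A)$ be open and $f\in C^\infty_{\Spec(A)}(V)$. We must show that $f\circ\phi^*\in C^\infty_{\Spec(B)}((\phi^*)^{-1}V)$. Fix $q$ in $(\phi^*)^{-1}V$ and set $p=\phi^*(q)$. By definition of the sheaf, there are an open neighbourhood $U\ni p$, elements $a_1,\dots,a_k\in A$, and a function $g$ smooth on a neighbourhood $W$ of the image of $(a_1,\dots,a_k)|_U$, such that $f|_U=g(a_1,\dots,a_k)$. Put $U'=(\phi^*)^{-1}(U)$; this is an open neighbourhood of $q$ by continuity. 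The image of $(\phi(a_1),\dots,\phi(a_k))|_{U'}$ equals the image of $(a_1,\dots,a_k)$ on $\phi^*(U')\subset U$, so it lies in $W$. Using the key identity,
\[ (f\circ\phi^*)|_{U'}=g(\phi(a_1),\dots,\phi(a_k))|_{U'}, \]
which is exactly a local expression of the form required in the definition of $C^\infty_{\Spec(B)}$.

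No step is a real obstacle. The only point needing care is the smoothness step: one must check that the domain of $g$ still contains the image of the pulled-back tuple, and the inclusion $\phi^*(U')\subset U$ settles this.
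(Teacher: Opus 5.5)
Your proposal is correct and follows essentially the same route as the paper: define the induced map by precomposition, check continuity on the generating open sets via $(\phi^*)^{-1}(a^{-1}(J))=\phi(a)^{-1}(J)$, and verify smoothness by pulling back local expressions $g(a_1,\dots,a_k)$ to $g(\phi(a_1),\dots,\phi(a_k))$. You also record the functoriality identities and check that the pulled-back tuple still lands in the domain of $g$, which the paper leaves implicit.
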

\begin{proof}
A map of algebras $f\colon A\rightarrow B$ induces a map of sets
\[ \ti{f}\colon \Spec(B)=S_B\rightarrow \Spec(A)=S_A.\] 
If $a \in A$ and $U=a^{-1}(J)\subset \Spec(A)$ is an open subset then $\ti{f}^{-1}(U)=(a\circ \ti{f})^{-1}(U)=(f(a))^{-1}(U)\subset \Spec(B)$ is an open subset, hence $\ti{f}$ is continuous. If $g(a_1|_U,...,a_k|_U)\in C^\infty_{S_A}(U)$ then
\[ \ti{f}^*g(a_1|_U,...,a_k|_U)=g(f(a_1)|_{\ti{f}^{-1}(U)},...,f(a_k)|_{\ti{f}^{-1}(U)})\in C^\infty_{S_B}(\ti{f}^{-1}(U)) \]
which verifies that pullback $\ti{f}^*$ maps elements of $C^\infty_{S_A}$ to elements of $C^\infty_{S_B}$, and hence $\ti{f}$ is a morphism of differential spaces.
\end{proof}

We will want a criterion describing when the topological space $S=\tn{Spec}(A)$ can be equipped with the structure of a smooth manifold such that $C^\infty_S$ is its sheaf of smooth functions. 

\begin{definition}
\label{d:local-coord}
Let $U\subset S=\Spec(A)$ be an open subset and $a^1,...,a^n\in C^\infty_S(U)$. We will say that $(a^1,...,a^n)$ are \emph{local coordinates on $U$} if
\begin{itemize}
\item $a=(a^1,...,a^n)\colon U\rightarrow \bR^n$ is a bijection to an open subset $a(U)\subset \bR^n$.
\item $C^\infty_S(U)$ is \emph{smoothly generated} by $a^1,...,a^n$ in the sense that $C^\infty_S(U)=\{\varphi(a^1,...,a^n)\mid \varphi \in C^\infty(a(U))\}$.
\end{itemize}
\end{definition}
\begin{proposition}
\label{p:homeo-smooth-gen}
Let $U\subset S=\Spec(A)$ admit local coordinates $a=(a^1,...,a^n)$. Then $a\colon U\rightarrow a(U)$ is a homeomorphism. Moreover for any open $V\subset U$, $C^\infty_S(V)$ is smoothly generated by $a^1|_V,...,a^n|_V$.
\end{proposition}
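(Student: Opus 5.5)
Write $W=a(U)\subset\bR^n$, which is open by assumption, and $a^{-1}\colon W\to U$ for the inverse of the bijection. The proof splits into three parts: continuity of $a$, continuity of $a^{-1}$, and smooth generation on smaller opens.

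\emph{Continuity of $a$.} Each $a^i$ lies in $C^\infty_S(U)$. Local sections of $C^\infty_S$ are continuous, because locally they have the form $g(b_1,\dots,b_k)$ with $b_j\in A$ continuous and $g$ smooth. Hence $a$ is continuous.

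\emph{Continuity of $a^{-1}$.} The topology on $U$ is the subspace topology. It is generated by the sets $b^{-1}(J)\cap U$ with $b\in A$ and $J\subset\bR$ open, since the topology on $S$ is the coarsest making all $b\in A$ continuous. So it suffices to show that each $b|_U$, composed with $a^{-1}$, is continuous on $W$. Now $b|_U\in C^\infty_S(U)$, since $b$ itself is trivially smooth (take $k=1$ and $g=\mathrm{id}$). By smooth generation, $b|_U=\varphi(a^1,\dots,a^n)$ for some $\varphi\in C^\infty(W)$. Therefore $b|_U\circ a^{-1}=\varphi$, which is continuous. It follows that $(a^{-1})^{-1}\bigl(b^{-1}(J)\cap U\bigr)=\varphi^{-1}(J)$ is open in $W$, and so $a$ is a homeomorphism.

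\emph{Smooth generation on $V\subset U$.} Let $V\subset U$ be open; then $a(V)$ is open in $W$, since $a$ is a homeomorphism. One inclusion is clear. If $\varphi\in C^\infty(a(V))$, then $\varphi(a^1|_V,\dots,a^n|_V)\in C^\infty_S(V)$, because $C^\infty_S$ is a sheaf closed under post-composition with smooth functions. (Here one composes locally with a smooth function on $\bR^n$ agreeing with $\varphi$ near a point; this is possible since $a$ is continuous and $a(V)$ is open.)

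For the converse, let $f\in C^\infty_S(V)$ and set $\psi=f\circ a^{-1}\colon a(V)\to\bR$. It suffices to show that $\psi$ is smooth near each point $x=a(p)$. By definition of $C^\infty_S$, there are an open $U'\ni p$ with $U'\subset V$, elements $b_1,\dots,b_k\in A$, and a smooth $g$ with $f|_{U'}=g(b_1,\dots,b_k)$. By the previous paragraph, each $b_j|_U=\varphi_j\circ a$ with $\varphi_j\in C^\infty(W)$. Hence on $a(U')$, which is an open neighbourhood of $x$, we have $\psi=g(\varphi_1,\dots,\varphi_k)$. This is smooth: $g$ is smooth on a neighbourhood of the image of $(b_1,\dots,b_k)|_{U'}$, which is the image of $(\varphi_1,\dots,\varphi_k)$ on $a(U')$. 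Thus $\psi\in C^\infty(a(V))$ and $f=\psi(a^1|_V,\dots,a^n|_V)$.

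The main subtlety is this last step, which deduces smoothness on $V$ from generation only on $U$. The key observation is that generation on $U$ already expresses every element of $A$, restricted to $U$, as a smooth function of the coordinates. Since $C^\infty_S$ is locally built from elements of $A$, the statement then localizes to arbitrary open $V\subset U$.
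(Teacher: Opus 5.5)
Your proof is correct and follows the paper's argument. Continuity of $a$ comes from its components lying in $C^\infty_S(U)$; openness of $a$ (equivalently, continuity of $a^{-1}$) comes from writing each $b|_U=\varphi\circ a$ via smooth generation; and smooth generation on $V$ comes from expressing the local generators $b_j|_U$ as smooth functions of $a$. The only differences are cosmetic: you define $\psi=f\circ a^{-1}$ on $a(V)$ at once rather than gluing local functions $F_i$, and you also check the easy inclusion $\varphi\circ a|_V\in C^\infty_S(V)$, which the paper leaves implicit.
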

\begin{proof}
$a$ is continuous because its components $a^i \in C^\infty_S(U)$, and $C^\infty_S$ is a subsheaf of the sheaf of continuous functions on the spectrum. We must show that $a$ is an open map. Subsets $b^{-1}(J)\cap U=(b|_U)^{-1}(J)$ for $b \in A$ and opens $J\subset \bR$, generate the topology of $U$. Since $a^1,...,a^n$ smoothly generate $C^\infty_S(U)$, there exists a smooth function $\varphi$ such that $b|_U=\varphi(a^1,...,a^n)=\varphi\circ a$. Then
\[ a((b|_U)^{-1}(J))=a\circ a^{-1}\circ \varphi^{-1}(J)=\varphi^{-1}(J) \]
which is open since $\varphi$ is continuous.

To prove the second claim, let $f \in C^\infty_S(V)$. By the first part of the proof $a(V)$ is an open subset of $\bR^n$. By the definition of $C^\infty_S$, there is an open cover $V=\cup V_i$, and for each $V_i$, elements $b_{ij} \in A$ for $j=1,...,j_i$, and smooth functions $\varphi_i$ defined on an open subset of $\bR^{j_i}$, such that $f|_{V_i}=\varphi_i(b_{i1}|_{V_i},...,b_{ij_i}|_{V_i})$. By the smooth generation hypothesis, there exist $\psi_{ij}\in C^\infty(a(U))$, $j=1,...,j_i$, such that $b_{ij}|_U=\psi_{ij}(a^1,...,a^n)$. Thus 
\[ f|_{V_i}=\varphi_i(\psi_{i1}(a^1,...,a^n)|_{V_i},...,\psi_{ij_i}(a^1,...,a^n)|_{V_i})=F_i\circ a|_{V_i},\]
where $F_i=\varphi_i(\psi_{i1},...,\psi_{ij_i})$. By composing with $(a|_{V_i\cap V_j})^{-1}$ we deduce that
\begin{equation} 
\label{e:FiFj}
F_i\circ a|_{V_i\cap V_j}=f|_{V_i\cap V_j}=F_j\circ a|_{V_i\cap V_j} \quad \Rightarrow \quad F_i|_{a(V_i\cap V_j)}=F_j|_{a(V_i\cap V_j)}.
\end{equation}
By the first part of the proof, $a(V_i)$, $a(V_i\cap V_j)$ are open subsets of $a(V)$, hence \eqref{e:FiFj} shows that the functions $F_i$ glue together to a smooth function $F$ on $a(V)$, and then $f=F\circ a|_V$. This proves that $a^1|_V,...,a^n|_V$ smoothly generate $C^\infty_S(V)$.
\end{proof}

The next proposition is the desired criterion for $(S=\Spec(A),C^\infty_S)$ to be manifold. The neat observation is that it is not necessary to check smoothness of transition functions, as this is automatic. The criterion is a variant of \cite[Lemma 2.4]{HigsonEulerLike}.

\begin{proposition}
\label{p:char-spec}
If $S=\Spec(A)$ can be covered by countably many open subsets that admit local coordinates, then $S$ admits the structure of a smooth manifold with sheaf of smooth functions $C^\infty_S$.
\end{proposition}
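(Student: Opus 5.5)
The plan is to build an atlas directly from the given cover and check the manifold axioms one at a time. Let $S=\bigcup_{i\in\mathbb{N}} U_i$, where each $U_i$ carries local coordinates $a_i=(a_i^1,\dots,a_i^{n_i})$. By Proposition \ref{p:homeo-smooth-gen}, each $a_i\colon U_i\to a_i(U_i)\subset\bR^{n_i}$ is a homeomorphism onto an open set. So the pairs $(U_i,a_i)$ form a topological atlas on $S$. We already know $S$ is Hausdorff, and it is second countable because it is a countable union of open sets each homeomorphic to an open subset of Euclidean space. Hence $S$ is a topological manifold, with possibly varying dimension on different components.

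\textbf{Smooth transition functions.} Fix $i,j$ and set $V=U_i\cap U_j$. By Proposition \ref{p:homeo-smooth-gen}, applied to $U_j$ and the open subset $V$, the algebra $C^\infty_S(V)$ is smoothly generated by $a_j|_V$. Each component $a_i^k|_V$ lies in $C^\infty_S(V)$ by the sheaf property, so $a_i^k|_V=\psi^k\circ a_j|_V$ for some $\psi^k\in C^\infty(a_j(V))$. Since $a_j|_V$ is a bijection onto $a_j(V)$, this says $a_i\circ a_j^{-1}|_{a_j(V)}=(\psi^1,\dots,\psi^{n_i})$, which is smooth. By symmetry the inverse transition is smooth too, so the atlas is smooth.

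If equal dimension is required, the invariance of dimension (or the fact that a diffeomorphism has an invertible Jacobian) shows $n_i=n_j$ whenever $V\neq\emptyset$. The dimension is therefore locally constant, and we can either allow this or restrict to components.

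\textbf{Identifying the sheaf.} It remains to show that the sheaf of smooth functions for this atlas equals $C^\infty_S$. Both are sheaves of functions on $S$, so it suffices to compare them on open sets $W\subset U_i$. By Proposition \ref{p:homeo-smooth-gen}, $C^\infty_S(W)=\{\varphi\circ a_i|_W\mid \varphi\in C^\infty(a_i(W))\}$. This is exactly the set of functions on $W$ that are smooth in the chart $a_i$.

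For a general open $W$, cover it by the sets $W\cap U_i$ and use locality on both sides. Locality holds for $C^\infty_S$ because membership is defined locally, and for manifold-smooth functions by definition.

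The main point needing care is the transition-function step, since smoothness there is not assumed but extracted from smooth generation. This works only because Proposition \ref{p:homeo-smooth-gen} gives smooth generation on the overlap $V$ rather than just on $U_j$. The rest is routine bookkeeping.
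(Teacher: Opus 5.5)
Your proposal is correct and follows the same route as the paper: the first part of Proposition \ref{p:homeo-smooth-gen} makes the coordinate patches into charts, and its second part (smooth generation on the overlaps) makes the transition maps smooth. The additional checks you supply are all correct; the paper leaves them implicit. These are Hausdorffness, second countability, local constancy of dimension, and the identification of the manifold's smooth functions with $C^\infty_S$.
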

\begin{proof}
The hypothesis, the first condition in Definition \ref{d:local-coord}, and the first part of Proposition \ref{p:homeo-smooth-gen}, together say that $S$ is covered with open subsets, each of which can be identified with an open subset of $\bR^n$; these are the candidate manifold charts. The second condition in Definition \ref{d:local-coord} and the second part of Proposition \ref{p:homeo-smooth-gen} together imply that the transition functions are smooth. Indeed 
If $a=(a^1,...,a^n)|_U\rightarrow \bR^n$ and $\tilde{a}=(\tilde{a}^1,...,\tilde{a}^n)|_{\tilde{U}}\colon \tilde{U}\rightarrow \bR^n$ are two such charts, then by the second part of Proposition \ref{p:homeo-smooth-gen}, there are smooth functions $\phi_i$ such that $a_i|_{U\cap \tilde{U}}=\phi_i\circ (\tilde{a}|_{U\cap \tilde{U}})$. Then $a|_{U\cap \tilde{U}}=\phi\circ (\tilde{a}|_{U\cap \tilde{U}})$ where $\phi=(\phi_1,...,\phi_n)$ is smooth. Pre-compose with $(\tilde{a}|_{U\cap \tilde{U}})^{-1}$ to conclude that $a|_{U\cap \tilde{U}}\circ (\tilde{a}|_{U\cap \tilde{U}})^{-1}=\phi\colon \tilde{a}(U\cap \tilde{U})\rightarrow a(U\cap \tilde{U})$ is smooth.
\end{proof}

Suppose one is given a sheaf $\A$ of $C^\infty_M$-algebras over an auxiliary manifold $M$. Let $\S=\Spec(\A)$ denote the composite functor, from the category of open subsets of $M$ to the category of differential spaces. Since $\A,\Spec$ are both contravariant, $\S$ is covariant. Since $\Spec(C^\infty(U))=U$, functoriality of $\Spec$ yields compatible morphisms
\[ \pi_U\colon \S(U)\rightarrow U \]
for every open $U\subset M$, in other words, a natural transformation $\pi\colon \S\rightarrow \tn{Id}$.

\begin{lemma}
Let $M$ be a manifold, $\A$ a sheaf of $C^\infty_M$-algebras, $\S=\Spec(\A)$, and let $\pi\colon \S\rightarrow \tn{Id}$ be the corresponding natural transformation. Let $j\colon U\hookrightarrow M$ be an open inclusion. The morphism $\S(j)\colon \S(U)\hookrightarrow \S(M)$ is an open embedding that identifies $\S(U)$ with the open subset $\pi_M^{-1}(U)$, and $\pi_U=\pi_M|_{\S(U)}$.
\end{lemma}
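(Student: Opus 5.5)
The plan is to show that the restriction homomorphism $r\colon \A(M)\to\A(U)$ induces an injective map $\S(j)\colon \S(U)\to \S(M)$, $p\mapsto p\circ r$, whose image is $\pi_M^{-1}(U)$, and that this map identifies both the topologies and the smooth-function sheaves. The main tool is bump functions, used together with the $C^\infty_M$-algebra structure and the sheaf property of $\A$.

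\textbf{Step 1: the compatibility $\pi_U=\pi_M\circ\S(j)$.} This is naturality. The structure maps $C^\infty(M)\to\A(M)$ and $C^\infty(U)\to\A(U)$ commute with restriction, and $\Spec(C^\infty(U))=U$ sits inside $\Spec(C^\infty(M))=M$ via $j$, by Example \ref{ex:milnor-exercise}. Consequently the image of $\S(j)$ lies in $\pi_M^{-1}(U)$. Also, for $p\in\S(U)$ with $\pi_U(p)=x$ and any $f\in C^\infty(U)$, we have $p(f)=f(x)$.

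\textbf{Step 2: extension by zero.} Let $\chi\in C^\infty(M)$ have support contained in $U$. For $a\in\A(U)$ the section $\chi a$ vanishes on $U\setminus\supp\chi$. Together with the zero section on $M\setminus\supp\chi$, it glues by the sheaf axiom to a unique $\widetilde{\chi a}\in\A(M)$ with $r(\widetilde{\chi a})=\chi a$.

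\textbf{Step 3: injectivity.} Let $p\in\S(U)$ with $x=\pi_U(p)$, and choose $\chi$ as in Step 2 with $\chi=1$ near $x$. Then
\[ p(a)=\chi(x)p(a)=p(\chi a)=(p\circ r)(\widetilde{\chi a}). \]
So $p$ is determined by $p\circ r$. Two characters with the same image have the same base point by Step 1, and hence coincide.

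\textbf{Step 4: surjectivity onto $\pi_M^{-1}(U)$.} Given a character $q$ of $\A(M)$ with $\pi_M(q)=x\in U$, define $p(a)=q(\widetilde{\chi a})$.
\begin{itemize}
\item \emph{Independence of $\chi$.} If $\chi,\chi'$ are two such bump functions, choose $\psi$ with $\psi=1$ near $x$ and $\psi(\chi-\chi')=0$. Then $\psi\cdot(\widetilde{\chi a}-\widetilde{\chi' a})=0$ in $\A(M)$, which is checked locally using the sheaf property. Applying $q$ and using $q(\psi)=\psi(x)=1$ shows the two values agree.
\item \emph{Multiplicativity.} This follows by choosing $\chi'$ with $\chi'=1$ on the support of $\chi$, so that $\widetilde{\chi ab}\cdot\widetilde{\chi' \cdot 1}$-type identities hold.
\item \emph{Preimage.} We have $p\circ r=q$, since $q(b)=q(\chi)q(b)=q(\chi b)$ and $\chi b=\widetilde{\chi\, r(b)}$.
\end{itemize}
Finally, $\pi_M^{-1}(U)$ is open because $\pi_M$ is continuous.

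\textbf{Step 5: topology and smooth functions.} Each $b\in\A(M)$ pulls back to $r(b)\in\A(U)$, so $\S(j)$ is smooth and continuous. Conversely, fix $a\in\A(U)$ and a bump $\chi$ as above. On the open set $\pi_U^{-1}(\{\chi=1\}^\circ)$ the function $a$ coincides with the pullback of $\widetilde{\chi a}$, by the computation in Step 3. Hence the generating functions of $\S(U)$ are locally pullbacks of generating functions of $\S(M)$ restricted to $\pi_M^{-1}(U)$. This shows that $\S(j)$ is a homeomorphism onto $\pi_M^{-1}(U)$, and that $C^\infty_{\S(U)}$ agrees with the restriction of $C^\infty_{\S(M)}$, since both sheaves are defined by local expressions in generators.

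The main obstacle is the well-definedness in Step 4, together with the local agreement in Step 5. Both rest on the fact that sections of $\A$ are determined near a point by multiplication with bump functions, and this requires careful use of the sheaf gluing axiom.
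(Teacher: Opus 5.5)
\paragraph{Comparison with the paper's proof.} Your proof is correct. Step 3 is the paper's injectivity argument: multiply by a bump function $\chi$ with $\chi(x)=1$ and $\supp\chi\subset U$, extend $\chi a$ by zero using the sheaf axiom, and evaluate.

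You differ from the paper in how the image and the embedding property are handled. The paper identifies the image only ``by naturality'': from $j\circ\pi_U=\pi_M\circ\S(j)$ it passes directly to $\pi_M^{-1}\circ j=\S(j)\circ\pi_U^{-1}$. Naturality alone only gives the inclusion $\S(j)(\S(U))\subset\pi_M^{-1}(U)$. The reverse inclusion says that every character of $\A(M)$ lying over a point of $U$ factors through the restriction $\A(M)\to\A(U)$. This really does use gluing and bump functions. For the presheaf with $\A(M)=C^\infty(M)$ and $\A(V)=0$ for $V\neq M$, naturality still holds, but $\S(V)=\emptyset\neq V=\pi_M^{-1}(V)$ for nonempty $V\neq M$. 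Your Step 4, which constructs the preimage $p(a)=q(\widetilde{\chi a})$, supplies exactly this argument.

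Similarly, the paper does not check that $\S(j)$ is a homeomorphism onto its image or that it matches the sheaves of smooth functions. Both are required for an open embedding of differential spaces. Your Step 5 does this through the local identity $a=\S(j)^*\widetilde{\chi a}$ on $\pi_U^{-1}(\{\chi=1\}^\circ)$. So the paper's version buys brevity, while yours is longer but proves the full statement.

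\paragraph{One point to tighten.} The multiplicativity bullet in Step 4 is too vague as written. The clean version: if $\chi'=1$ on $\supp\chi$, then $\widetilde{\chi a}\,\widetilde{\chi' b}=\widetilde{\chi ab}$, which you check on the cover $\{U,\,M\setminus\supp\chi\}$. Independence of the bump function then gives $p(a)p(b)=p(ab)$. Unitality of $p$ follows from $p\circ r=q$.
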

\begin{proof}
By naturality $j\circ \pi_U=\pi_M\circ \S(j)$ and hence $\pi_M^{-1}\circ j=\S(j)\circ \pi_U^{-1}$. It follows that $\S(j)(\S(U))=\S(j)(\pi_U^{-1}(U))=\pi_M^{-1}(j(U))=\pi_M^{-1}(U)$ so the image of $\S(j)$ is $\pi_M^{-1}(U)$, an open set. We will argue that $\S(j)$ is injective. Let $p\ne p'\in \S(U)$. Then there exists $a_U \in \A(U)$ such that $a_U(p)\ne a_U(p')$. Suppose by contradiction that $\S(j)(p)=\S(j)(p')$. By naturality $\pi_U(p)=\pi_M(\S(j)(p))=\pi_M(\S(j)(p'))=\pi_U(p')=:m$. Let $\chi\in C^\infty(M)$ be a bump function such that $\chi(m)=1$ and $\supp(\chi)\subset U$. Let $V$ be an open subset of $M$ such that $\ol{V}\cap \supp(\chi)=\emptyset$ and $U\cup V=M$. We claim that there is a unique section $a \in \A(M)$ such that $a|_U=j^*a=\chi a_U$ (where the product here is defined using the $C^\infty(U)$-algebra structure of $\A(U)$) and $a|_V=0$; indeed $\chi a_U|_{U\cap V}=0$ since $\ol{V}\cap \supp(\chi)=\emptyset$, so the existence and uniqueness of $a$ follows from the sheaf property of $\A$. Then 
\[ a_U(p)=\chi(m)a_U(p)=(j^*a)(p)=a(\S(j)(p))=a(\S(j)(p'))=a_U(p') \] 
contradicting $a_U(p)\ne a_U(p')$. This proves that $\S(j)$ is injective. The relation $\pi_U=\pi_M|_{\S(U)}$ then follows by naturality of $\pi$.
\end{proof}

\begin{corollary}
\label{c:spec-manifold}
Let $M$ be a manifold, $\A$ a sheaf of $C^\infty_M$-algebras, $\S=\Spec(\A)$, and let $\pi\colon \S\rightarrow \tn{Id}$ be the corresponding natural transformation. Let $S=\S(M)$. Let $\{U_i\}$ be a countable open cover of $M$ such that $\S(U_i)$ admits local coordinates. Then $S$ is a manifold with sheaf of smooth functions $C^\infty_S$, obtained by gluing together the smooth manifolds $\{\S(U_i)\}$ along the open subsets $\{\S(U_i\cap U_j)\}$, and equipped with a smooth map $\pi_M\colon S\rightarrow M$.
\end{corollary}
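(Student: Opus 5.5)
The plan is to combine the preceding lemma with Proposition \ref{p:char-spec}, or more directly with Proposition \ref{p:homeo-smooth-gen}, applied chart by chart. First, by the lemma, each $\S(j_i)\colon \S(U_i)\hookrightarrow S$ is an open embedding with image $\pi_M^{-1}(U_i)$. Since $\{U_i\}$ covers $M$, the sets $\pi_M^{-1}(U_i)$ cover $S$. Applying the lemma to $U_i\cap U_j\subset U_i$ and to $U_i\cap U_j\subset M$ identifies $\S(U_i\cap U_j)$ with $\pi_M^{-1}(U_i)\cap\pi_M^{-1}(U_j)$. This gives the gluing description at the level of topological spaces, and $S$ is Hausdorff as a spectrum.

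Second, I will transport the local coordinates. Let $(a^1,\dots,a^n)$ be local coordinates on $\S(U_i)$, viewed as smooth functions on the differential space $\S(U_i)=\Spec(\A(U_i))$. I need to know that, under the open embedding $\S(j_i)$, the differential space $\S(U_i)$ agrees with the open subspace $\pi_M^{-1}(U_i)$ of $S$ carrying the restricted sheaf $C^\infty_S$. That is, I must show $C^\infty_S|_{\pi_M^{-1}(U_i)}$ and $C^\infty_{\S(U_i)}$ coincide. This is the main obstacle, and I would handle it as follows.

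One inclusion is immediate from functoriality: $\S(j_i)$ is a morphism of differential spaces, so pullbacks of elements of $\A(M)$ are elements of $\A(U_i)$. For the reverse inclusion, take $b\in\A(U_i)$ and a point $p$ with $\pi(p)=m$. Choose a bump function $\chi$ that equals $1$ near $m$ and is supported in $U_i$. The sheaf-gluing trick from the proof of the lemma extends $\chi b$ to an element $\tilde b\in\A(M)$ that agrees with $b$ on $\pi^{-1}(\{\chi=1\})$. Indeed, characters over points where $\chi=1$ satisfy $p(\chi b)=\chi(\pi(p))\,p(b)$, because $p$ is a $C^\infty(U_i)$-algebra character compatible with $\pi$. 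Hence locally the generators of $C^\infty_{\S(U_i)}$ are restrictions of generators of $C^\infty_S$. It also follows that the two topologies agree.

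Consequently each $\pi_M^{-1}(U_i)\subset S=\Spec(\A(M))$ admits local coordinates in the sense of Definition \ref{d:local-coord}. Since the cover is countable, Proposition \ref{p:char-spec} makes $S$ a smooth manifold with sheaf of smooth functions $C^\infty_S$. The charts $\S(U_i)$ are open submanifolds glued along $\S(U_i\cap U_j)$. Finally, $\pi_M$ is a morphism of differential spaces by functoriality applied to $C^\infty(M)\to\A(M)$. Between manifolds, morphisms of differential spaces are exactly smooth maps, so $\pi_M$ is smooth.
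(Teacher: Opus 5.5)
Your proposal is correct and follows the same route the paper takes implicitly (the corollary is stated without proof): identify each $\S(U_i)$ with the open subset $\pi_M^{-1}(U_i)\subset S$ via the preceding lemma, then apply Proposition \ref{p:char-spec}. Your additional step reuses the lemma's bump-function gluing trick to show that $\S(j_i)$ is an isomorphism of differential spaces onto $\pi_M^{-1}(U_i)$, so that local coordinates on $\S(U_i)$ are genuinely local coordinates on that open subset of $S$; this fills in a detail that the lemma asserts (``open embedding'') but whose proof only establishes injectivity and the image.
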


We end this section with a collection of examples including higher order tangent bundles of a manifold.
\begin{example}
\label{e:higher-tgt}
Let $(R,\mf{m})$ be a commutative local $\bR=R/\mf{m}$-algebra which is $d$-dimensional ($d<\infty$) as an $\bR$-vector space, and let $\varepsilon\colon R\rightarrow R/\mf{m}=\bR$ denote the quotient homomorphism. Let $S^R=\Spec^R(A)=\Hom_{\tn{alg}}(A,R)$ equipped with the weakest topology such that for all $a\in A$ the function $a^R\colon \Spec^R(A)\rightarrow R\simeq \bR^d$ is continuous, where $a^R(p)=p(a)$. Imitating our earlier discussion of the sheaf $C^\infty_S$ on $S=\Spec(A)$ but replacing the smooth $\bR$-valued function $g$ defined on an open subset of $\bR^k$ (resp. $a_1,...,a_k$) with a smooth $\bR$-valued function $g$ defined on an open subset of $R^k\simeq \bR^{kd}$ (resp. $a_1^R,...,a_k^R$), we obtain a sheaf $C^\infty_{S^R}$ of $\bR$-valued functions on $S^R$. The sheaf $C^\infty_S$ is realized as the sub-sheaf of $C^\infty_{S^R}$ obtained by considering those smooth functions $g\colon R^k\rightarrow \bR$ that factor through $\varepsilon^{\times k}\colon R^k\rightarrow \bR^k$; in particular this makes $C^\infty_{S^R}$ into a sheaf of $C^\infty_S$-algebras. Specialize to the case $A=C^\infty(M)$ for a smooth manifold $M$, hence $S=\Spec(C^\infty(M))=M$, and $\A=C^\infty_{S^R}$ is a sheaf of $C^\infty_M$-algebras. If $U\subset M$ is the domain of a coordinate chart $\phi=(x^1,...,x^n)$, and $r_1,...,r_d$ is a basis for the dual vector space to $R$, then $\{x_j^i=r_j\circ (x^i)^R\mid 1\le i\le n,1\le j\le d\}$ are local coordinates on $\S(U)=\Spec(\A(U))$. By Corollary \ref{c:spec-manifold}, $M^R=\Spec^R(C^\infty(M))$ is a smooth manifold equipped with a smooth map $\pi\colon M^R\rightarrow M$. The algebra $R$ is known as a \emph{Weil algebra}, an example being the `dual numbers' $R=\bR[\epsilon]/(\epsilon^2)$ for which $M^R=TM$, cf. \cite[Section 35]{kolar2013natural} for the relation to higher order tangent bundles.
\end{example}

\section{Weightings of manifolds}
In this section we recall the notion of a `weighting' of a manifold $M$ introduced in \cite{meinrenken2021euler, LMweightings} and, under the name `quasi-homogeneous structure', in \cite{melrose-corners-notes, behr-quasi-homog}. The motivation for the definition comes from studying the geometry of $M$ near a submanifold $N$ in situations where one wants to coherently assign weights to the local coordinates in appropriate submanifold charts; to make this precise and obtain a global description, we use sheaves of ideals. For an equivalent formulation using jet bundles see \cite{LMweightings}.

\begin{definition}
Let $n\in \bZ_{\ge 0}$. A (\emph{length} $n$) \emph{weight sequence} $w=(w_1,...,w_n)$ is an $n$-tuple of non-negative integers $w_1\le w_2\le \cdots \le w_n$. The components $w_i$ are \emph{weights}. The corresponding \emph{weighted scalar multiplication} on $\bR^n$ is the $(\bR,\cdot)$-monoid action given by $\lambda\colon x=(x^1,...,x^n)\mapsto \lambda^wx=(\lambda^{w_1}x^1,...,\lambda^{w_n}x^n)$.
\end{definition}
An open subset $U\subset \bR^n$ and a weight sequence $w=(w_1,...,w_n)$ together determine $\{I_k\}$, a decreasing sequence of ideals
\[ C^\infty(U)=I_0\supset I_1\supset I_2\supset \cdots \]
where
\begin{equation} 
\label{e:wt-loc-form}
I_k=(x^s\mid s\cdot w\ge k), \quad x^s=(x^1)^{s_1}\cdots (x^n)^{s_n}, \quad s\cdot w=\sum_{j=1}^n s_jw_j,
\end{equation}
and $(x^s\mid s\cdot w\ge k)$ denotes the ideal generated by the monomials $x^s$ for $s \in \bZ^n_{\ge 0}$ satisfying $s\cdot w\ge k$. The ideals $I_k$ can be characterized in terms of the weighted scalar multiplication.
\begin{lemma}
\label{l:Ik-dyn}
Let $w=(w_1,...,w_n)$ be a weight sequence. Let $U\subset \bR^n$ be an open subset invariant under weighted scalar multiplication. Then $I_k\subset C^\infty(U)$ consists of those smooth functions $f$ such that $\lim_{\lambda\rightarrow 0}\lambda^{-k}f(\lambda^wx)$ exists for all $x\in U$.
\end{lemma}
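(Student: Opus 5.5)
The plan is to prove the two inclusions separately. The inclusion of $I_k$ into the set of functions with the limit property is a direct computation. For the reverse inclusion, I will localize, do a Taylor expansion in the positive-weight coordinates, and then use the existence of the limit to kill all low-weight Taylor coefficients.

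\emph{Step 1: $I_k$ has the limit property.} Since $U$ is invariant under the $(\bR,\cdot)$-action, $\lambda^w x\in U$ for all $\lambda\in[0,1]$, including $\lambda=0$. Take a generator term $g\,x^s$ with $s\cdot w\ge k$ and $g\in C^\infty(U)$. Then
\[ \lambda^{-k}g(\lambda^w x)(\lambda^w x)^s=\lambda^{s\cdot w-k}\,g(\lambda^w x)\,x^s. \]
As $\lambda\to 0$ this converges, because $g(\lambda^wx)\to g(0^wx)$ by continuity and $s\cdot w-k\ge 0$. Finite sums of such terms also converge, so every element of $I_k$ has the limit property.

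\emph{Step 2: localize.} Write coordinates as $x=(y,z)$, where $y$ collects the coordinates of weight $0$ and $z$ those of positive weight. Membership in the finitely generated ideal $I_k$ is a local property: if $f$ lies locally in the ideal generated by the monomials, then it lies in $I_k$ by a partition of unity argument. So it suffices to show that every point $p\in U$ has a neighbourhood on which $f$ agrees with an element of the ideal generated by the monomials of weight $\ge k$.
\begin{itemize}
\item If some positive-weight coordinate $z^i$ is nonzero at $p$, then $(z^i)^k$ has weight $\ge k$ and is a unit near $p$. Hence $f=\big(f/(z^i)^k\big)(z^i)^k$ near $p$, and this case is trivial.
\item If $z(p)=0$, choose a neighbourhood of product form $B\times B'$, with $B'$ a ball centred at $0$ in the $z$-variables. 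Taylor expand $f$ in $z$ about $z=0$ to order $N=k$ with integral (Hadamard) remainder:
\[ f(y,z)=\sum_{|s|<k} f_s(y)z^s+\sum_{|s|=k}h_s(y,z)z^s. \]
Each remainder monomial has weight $\ge |s|=k$, since all weights of the $z$-coordinates are $\ge 1$. The polynomial terms with $s\cdot w\ge k$ are also in the ideal. So $f\equiv \sum_{s\cdot w<k}f_s(y)z^s$ modulo the local ideal.
\end{itemize}

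\emph{Step 3: use the limit condition.} Group the surviving terms by weight, setting $P_j(y,z)=\sum_{s\cdot w=j}f_s(y)z^s$ for $j<k$. For $x\in B\times B'$ the weighted action fixes $y$ and shrinks $z$, so $\lambda^w x$ stays in $B\times B'$. Applying Step 1 to the remainder gives
\[ \lambda^{-k}f(\lambda^wx)=\sum_{j<k}\lambda^{j-k}P_j(y,z)+(\text{terms with a finite limit}). \]
If the left side has a limit, then by descending induction on the most singular power one gets $P_j(y,z)=0$ for all $j<k$: if $P_{j_0}(y,z)\neq 0$ for the smallest such $j_0$, the term $\lambda^{j_0-k}P_{j_0}(y,z)$ blows up and nothing can cancel it. For fixed $y$ the $z$ range over the open ball $B'$, so each polynomial $P_j(y,\cdot)$ vanishes identically, and hence every $f_s(y)=0$. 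Therefore $f$ lies locally in the ideal generated by the monomials of weight $\ge k$, and Step 2 concludes.

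The main obstacle is Step 2: ensuring that the Taylor remainder really lands in the ideal and that the localization is legitimate, which needs the product-ball neighbourhoods and the unit trick away from $z=0$. The remaining steps are routine bookkeeping.
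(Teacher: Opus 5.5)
Your proof is correct and follows essentially the same approach as the paper's. Both arguments Taylor expand in the positive-weight variables, observe that the order-$k$ remainder lies in $I_k$ because every positive weight is at least $1$, and use the existence of the limit to kill the Taylor terms of weight $<k$. The only difference is that you localize: product balls near $z=0$, the unit $(z^i)^k$ away from $z=0$, and a partition of unity using finite generation of $I_k$. The paper instead expands once globally around $U_0=U\cap\{z=0\}$ and simply asserts that the remainder lies in $I_{U_0}^k\subset I_k$, so your localization supplies the detail the paper leaves implicit.
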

\begin{proof}
If $f\in I_k$ it is immediate that the limit in the statement exists. Conversely suppose $f\in C^\infty(U)$ has the property that $\lim_{\lambda\rightarrow 0}\lambda^{-k}f(\lambda^wx)$ exists for all $x\in U$. Let $w_1,...,w_p$ be the weights that equal $0$, hence $0<w_{p+1}\le \cdots \le w_n$ are the positive weights. Let $U_0=U\cap \{x^{p+1}=\cdots=x^n=0\}$ be the subspace of $U$ where the coordinates with positive weight vanish. Consider the Taylor expansion of $f$ in the variables $x^{p+1},...,x^n$ around $U_0$ up to order $k$; the remainder term in the Taylor expansion belongs to $I_{U_0}^k$ where $I_{U_0}$ is the vanishing ideal of $U_0$. One has $I_{U_0}^k\subset I_k$, hence the remainder term belongs to $I_k$. On the other hand, the $k$-th order Taylor approximation is a finite sum of terms, each of which is a smooth function of $x^1,...,x^p$ times a monomial $x^s$ in $x^{p+1},...,x^n$. Existence of the limit implies that the only monomials which can occur are those with $s\cdot w\ge k$, and hence the $k$-th order Taylor approximation also belongs to $I_k$.
\end{proof}

\begin{definition}
A \emph{weighting} of $M$ is a sequence of ideal sheaves
\[ C^\infty_M=\I_0\supset \I_1\supset \I_2\supset\cdots \]
such that for every $p \in M$, there exists a coordinate chart $(U,\phi=(x^1,...,x^n))$ around $p$ and a weight sequence $w=(w_1,...,w_n)$, such that in $\phi$-coordinates, $\{\I_k(U)\}$ is of the form \eqref{e:wt-loc-form}. We refer to such a coordinate chart as a \emph{weighted coordinate chart}. The maximal weight $r$ across all weighted coordinate charts is assumed to be finite and is called the \emph{order} of the weighting. The pair consisting of the manifold $M$, together with a weighting of $M$, is called a \emph{weighted manifold}. A morphism of weighted manifolds from $(M,\I_\bullet)$ to $(M',\I'_\bullet)$ is a smooth map $f\colon M\rightarrow M'$ such that $f^*\I_k'\subset \I_k$ for all $k$.
\end{definition}
We mention a number of basic properties of weightings; for details see \cite{LMweightings}.
\bigskip

\noindent $\blacktriangleright$ \emph{The submanifold determined by the weighting.} The ideal $\I_1$ is generated locally by coordinates that have strictly positive weight in some weighted coordinate chart. It follows that $\I_1=\I_N$ is the vanishing ideal of a smooth topologically-closed embedded submanifold $N\subset M$, and those weighted coordinate charts that intersect $N$ non-trivially are automatically submanifold charts for $N$. For this reason one often includes $N$ in the description of a weighting, for instance referring to a \emph{weighting of $M$ along $N$}. We think of $\I_k$ as the sheaf of smooth functions that \emph{vanish to weighted order $k$ along $N$}. If all weights are either $0$ or $1$ (in other words, $r=1$) we call this the \emph{standard weighting} of $M$ along $N$, and $\I_k=\I_N^k$. Increasing some of the non-zero weights from $1$ to $2,3,4,...$ changes the decreasing sequence of ideals, making $\I_k$ \emph{larger} than $\I_N^k$: certain coordinates are considered to have higher weighted vanishing order on $N$, and so survive further along the decreasing sequence of ideals, making those ideals larger than they were in the case of the standard weighting of $M$ along $N$. Since a weighted morphism $f\colon M\rightarrow M'$ satisfies $f^*\I_1'\subset \I_1$, it follows that $f(N)\subset N'$, that is, $f\colon (M,N)\rightarrow (M',N')$ is, in particular, a map of pairs; the condition $f^*\I_k'\subset \I_k$ for $k\ge 2$ imposes further, higher order conditions on the behavior of $f$ near $N$.
\begin{remark}
\label{r:nested-submanifolds}
One simple way to specify a weighting is to give a sequence of nested submanifolds
\[ N=N_0\subset N_1\subset N_2\subset \cdots \subset N_{r-1}\subset N_r=M \]
set $\J_j=\I_{N_{j-1}}$ and $\I_k=\sum_i \sum_{j_1+...+j_i=k}\J_{j_1}\cdots\J_{j_i}$. All weightings are of this form for some sequence of nested submanifolds, but the choice of submanifolds is highly non-unique. 
\end{remark}
\bigskip

\noindent $\blacktriangleright$ \emph{Multiplicative properties of the ideals.} An immediate consequence of the local coordinate description is that
\[ \I_k\cdot \I_l\subset \I_{k+l}.\]
The order $r$ of the weighting is the smallest integer such that $\I_{r+1}\subset \I_N^2$, meaning in particular that in any weighted submanifold chart for $N$, the highest possible weight of a coordinate function is $r$. One can show \cite{LMweightings} that for $k\ge 2$,
\begin{equation} 
\label{e:higher-ideals-determined}
\I_k\cap \I_N^2=\sum_{0<j<k} \I_j\I_{k-j}.
\end{equation}
This, together with the containment $\I_{r+1}\subset \I_N^2$, imply that the weighting is already determined by the ideals $\I_1\supset \I_2\supset\cdots \supset \I_r$.
\bigskip

\noindent $\blacktriangleright$ \emph{First-order data along $N$.} As one might expect, a weighting determines `first-order' directional information along $N$ in the form of a filtration of $T_NM=TM|_N$ by subbundles
\[ T_NM_{(0)}=TN\subset T_NM_{(1)}\subset \cdots \subset T_NM_{(r)}=T_NM,\]
where $T_NM_{(k)}\subset T_NM$ consists of the vectors such that the corresponding directional derivative annihilates $\I_{k+1}$. If $(U,\phi=(x^1,...,x^n))$ is a weighted coordinate chart, the coordinate vector fields $\{\partial/\partial x^i\mid w_i\le k\}$ restrict to a local frame for $T_NM_{(k)}$ over $U\cap N$. Taking quotients by $TN$, we obtain a filtration
\begin{equation}  
\label{e:normal-bun-filtration}
\nu(M,N)_{(0)}=0\subset \nu(M,N)_{(1)}\subset \cdots \subset \nu(M,N)_{(r)}=\nu(M,N)
\end{equation}
of the normal bundle $\nu(M,N)=T_NM/TN$. The associated graded vector bundles are denoted 
\[  \gr(T_NM)=\bigoplus_{k=0}^r \gr^k(T_NM),\quad \gr(\nu(M,N))=\bigoplus_{k=0}^r \gr^k(\nu(M,N)). \]
There is a dual, decreasing filtration of $T^*_NM$, with $T^*_NM_{(k)}$ spanned by differentials along $N$ of elements of $\I_k$; equivalently the sheaf of sections of $T^*_NM_{(k)}$ is the $C^\infty_M/\I_1=C^\infty_M/\I_N=C^\infty_N$-module $\I_k/(\I_k\cap \I_N^2)$. The associated graded $\gr(T^*_NM)$ has $k$-th graded piece $\gr^k(T^*_NM)=T^*_NM_{(k)}/T^*_NM_{(k+1)}$, whose sheaf of sections is $\I_k/(\I_{k+1}+\I_k\cap \I_N^2)$. A weighted morphism $f\colon (M,N)\rightarrow (M',N')$ induces morphisms $\d_Nf_{(k)}\colon T_NM_{(k)}\rightarrow T_{N'}M'_{(k)}$. Since $\d_Nf(TN)\subset TN'$, $\d_Nf_{(k)}$ descends to  a morphism $\nu(f)_{(k)}\colon \nu(M,N)_{(k)}\rightarrow \nu(M',N')_{(k)}$. The corresponding morphisms of associated graded bundles are denoted $\gr(\d_Nf)$, $\gr(\nu(f))$ respectively.
\bigskip

\noindent $\blacktriangleright$ For non-standard weightings, a natural question is: does the first-order data along $N$ described above determine the weighting? The answer is `yes' when the order of the weighting $r=2$ and `no' for $r>2$. We elaborate on this in the next three remarks.

\begin{remark}
The data of a weighting of order $r=2$ is equivalent to giving a non-trivial subbundle $F=\nu(M,N)_{(1)}\subset \nu(M,N)$: given a weighting of order $r=2$ set $F=\tn{ann}(\{\d f\mid f \in \I_2\})/TN$, and conversely given $F$ define $\I_2=\{f\in \I_N\mid \d f|_F=0\}$. This special case was studied in \cite{meinrenken2021euler}.
\end{remark}
\begin{remark}
For $r>2$, the filtration of $T_NM$ described above is \emph{not} sufficient data to uniquely determine the weighting. For example consider $\bR^2$ and the following order $r=3$ weightings of $N=\{(0,0)\}$: (a) $x$ is weight 1 and $y$ is weight 3, (b) $x$ is weight 1 and $\ti{y}=y-x^2$ is weight 3. These have
\[ \I_2=(y,xy,x^2),\: \I_3=(y,xy,x^3),... \quad \text{and} \quad \ti{\I}_2=(\ti{y},x\ti{y},x^2),\: \ti{\I}_3=(\ti{y},x\ti{y},x^3),... \]
and they are not the same weighting because $\ti{y}=y-x^2\in \ti{\I}_3$ but $y-x^2\notin \I_3$. On the other hand they induce the same filtration $0\subset \tn{span}(\partial/\partial x)\subset \tn{span}(\partial/\partial x)\subset T_0\bR^2$.
\end{remark}
\begin{remark}
\label{r:jets}
In order to faithfully capture weightings then, higher order infinitesimal information along $N$ is needed. For instance, building on Remark \ref{r:nested-submanifolds}, to eliminate the non-uniqueness mentioned there, one could work with appropriate jets of submanifolds around $N$. In a similar vein, one can work with subbundles of the $r$-th order tangent bundle along $N$, an approach pursued in \cite{LMweightings}. In brief, a weighting of order $r$ determines (and is determined by) the subbundle $Q\subset T_rM|_N$ consisting of $r$-th order jets of curves $j^r\gamma$ such that for all $1\le k\le r$ and $f\in \I_k$, $j^r(f\circ \gamma)$ vanishes to order $k$.
\end{remark}

The next proposition is a criterion for a coordinate chart to be a weighted coordinate chart: if the coordinates have the correct weights (with multiplicities) to be a weighted coordinate chart, then near $N$ the coordinate chart is indeed a weighted coordinate chart. The result is essentially \cite[Proposition 1.2.8]{brais2025streamlining}; for completeness we outline a proof.
\begin{proposition}
\label{p:criterion-for-wt-chart}
Let $(M^n,N^p,\I_\bullet)$ be a weighted manifold. Let $(U,\phi=(x^1,...,x^n))$ be a weighted coordinate chart, where $x^j$ has weight $w_j$. Let $(U,\psi=(y^1,...,y^n))$ be another coordinate chart. If $y^j\in \I_{w_j}$ for $j=1,...,n$ then there is an open neighborhood $V\subset U$ of $U\cap N$ such that $\psi|_V$ is a weighted coordinate chart.
\end{proposition}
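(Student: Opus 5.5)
Since both $\phi$ and $\psi$ are charts on $U$, write $y=\Psi(x)$ with $\Psi$ a diffeomorphism between open subsets of $\bR^n$. Let $I_k^y=(y^s\mid s\cdot w\ge k)$ be the ideals attached to the $y$-coordinates and weight sequence $w$. We must show $\I_k(V)=I_k^y(V)$ for all $k$ on some neighborhood $V$ of $U\cap N$. The hypothesis $y^j\in\I_{w_j}$, together with $\I_k\I_l\subset\I_{k+l}$, gives $y^s\in\I_{s\cdot w}\subset\I_k$ whenever $s\cdot w\ge k$. Hence $I_k^y\subset\I_k(U)$ for all $k$, with no shrinking needed. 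The work is in the reverse inclusion: each $x^s$ with $s\cdot w\ge k$ should lie in $I_k^y$ near $N$. It suffices to show $x^i\in I^y_{w_i}$ for each $i$, since $I^y_\bullet$ is also multiplicative.

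Order the coordinates by weight and group them into blocks $x_{(m)}$ and $y_{(m)}$ of weight exactly $m$, for $m=0,\dots,r$. Using the local form of $\I_\bullet$ in $x$-coordinates, expand $y^j\in\I_{w_j}$ by Taylor's theorem in the positive-weight $x$-variables around $U\cap N$, as in the proof of Lemma \ref{l:Ik-dyn}. This gives
\[ y_{(m)} = L_m(x_{(0)})\,x_{(m)} + (\text{terms in } \I_{m}\text{ involving only } x_{(l)},\ l<m,\text{ or products}) , \]
that is, $y_{(m)}$ is a linear combination of $x_{(l)}$ with $l\ge m$, plus elements of $\I_m\cap\I_N^2$ and functions of the lower-weight blocks. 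Concretely, the Jacobian $\partial y/\partial x$ along $N$ is block triangular with respect to the weight filtration, since $\d y^j|_N$ annihilates $T_NM_{(w_j-1)}$. Because $\Psi$ is a diffeomorphism, its diagonal blocks $\partial y_{(m)}/\partial x_{(m)}|_N$ are invertible. Here we use that the weight multiplicities of $\phi$ and $\psi$ agree; this is why the hypothesis insists the weights match. The weight-$0$ block likewise restricts to coordinates on $N$.

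Now induct on the weight $m$. For $m=0$ there is nothing to prove. Suppose $x_{(l)}\in I^y_l$ for all $l<m$, so that every $x^s$ with $s$ supported in blocks $<m$ and $s\cdot w\ge m$ lies in $I^y_m$. Write $\I_m(U)$ in $x$-coordinates. From the triangular structure, $y_{(m)}=A(x)\,x_{(m)}+B(x)\,x_{(>m)}+R$, where $R$ lies in the ideal generated by monomials in blocks $<m$ of weight $\ge m$, plus $\I_N$-multiples of $x_{(m)}$. By induction $R\in I^y_m+\I_N\cdot(x_{(m)})$. The obstacle is the terms involving $x_{(>m)}$. To handle them, I would run the induction from the top weight downward as well, or, more cleanly, handle all blocks simultaneously. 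Consider the matrix relation $y_{(\ge m)}=C(x)\,x_{(\ge m)}+R'$ with $R'\in I^y_m$, where $C$ is block upper triangular with invertible diagonal blocks along $N$, and hence invertible on a neighborhood $V$ of $U\cap N$. Inverting $C$ on $V$ gives $x_{(\ge m)}=C^{-1}(y_{(\ge m)}-R')$. Each $y^j$ with $w_j\ge m$ lies in $I^y_m$, so $x_{(m)}\in I^y_m(V)$. The main difficulty I expect is bookkeeping: showing the nonlinear correction $R'$ truly lies in $I^y_m$, not merely in $\I_m$. The induction hypothesis handles this, because those corrections are products of lower-weight $x$-blocks already expressed in $I^y$.

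Finally, $V$ is the neighborhood of $U\cap N$ where all the diagonal blocks of $C$, a finite collection, are invertible. On $V$ we get $\I_k(V)=I^y_k(V)$ for all $k$. The ideal sheaves agree on a neighborhood, and by \eqref{e:higher-ideals-determined} this only needs checking for $k\le r$. So $\psi|_V$ is a weighted coordinate chart.
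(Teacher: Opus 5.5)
Your argument is correct, but it takes a different route from the paper. The common start is the inclusion $I^y_k\subset\I_k$, which you and the paper obtain the same way.

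\textbf{The paper's route.} After shrinking so that $\psi$ is a submanifold chart for $N$ (giving $I^y_1=\I_1$), the paper never inverts anything. It inducts on $k$. Once $\I_{k-1}=I^y_{k-1}$, the inclusion gives a surjection $I^y_{k-1}/I^y_k\twoheadrightarrow\I_{k-1}/\I_k$. Both sides are locally free $C^\infty_N$-modules, framed by the monomials of weighted degree exactly $k-1$ in the $y$'s and $x$'s respectively. They therefore have equal rank, so the surjection is an isomorphism and $\I_k=I^y_k$.

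\textbf{Your route.} You prove directly that $x^i\in I^y_{w_i}$ near $N$, by induction on the weight $m$. Absorbing into $C$ every monomial with a factor of weight $\ge m$ leaves a remainder $R'$ built only from blocks of positive weight $<m$. By the inductive hypothesis and multiplicativity of $I^y_\bullet$, $R'$ lies in $I^y_m$. Inverting $C$ near $N$ then finishes the step. The base case $m=1$ has $R'=0$ and recovers the submanifold-chart property automatically.

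\textbf{Comparison.} The paper's version is shorter, needs a single shrinking, and the matching-multiplicity hypothesis enters as an equality of ranks. Yours is constructive and expresses each $x^i$ explicitly in terms of the $y^j$. There the multiplicity hypothesis enters as squareness of the diagonal blocks of the Jacobian.

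\textbf{Points to tighten.}
\begin{enumerate}
\item Invertibility of $C$ near $N$ rests on $C|_N$ being exactly the Jacobian block $\partial y_{(\ge m)}/\partial x_{(\ge m)}|_N$. This holds because the absorbed nonlinear entries of $C$ vanish on $N$, and because $R'\in\I_N^2$: its monomials have weighted degree $\ge m$ but use only coordinates of positive weight $<m$, so they have length at least $2$.
\item $C$ is block triangular only along $N$, not off it. So take $V$ to be the intersection over $m=1,\dots,r$ of the open sets where $\det C\neq 0$, rather than the set where the diagonal blocks are invertible.
\item The abandoned intermediate decomposition $y_{(m)}=A\,x_{(m)}+B\,x_{(>m)}+R$ and the appeal to Taylor expansion are unnecessary. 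The local form of $\I_{w_j}$ already gives the expansion of $y^j$ as a finite $C^\infty$-combination of monomials.
\end{enumerate}
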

\begin{proof}
Without loss of generality we can assume $U=M$ and $U\cap N=N$. The two coordinate systems determine two weightings on $U$ having the same order $r$ and weight sequence $w$:
\begin{equation} 
\label{e:wt-loc-form2}
\I_k=(x^s\mid s\cdot w\ge k), \qquad \I_k'=(y^s\mid s\cdot w\ge k),
\end{equation}
where $x=(x^1,...,x^n)$, $y=(y^1,...,y^n)$, $s=(s_1,...,s_n)$, $w=(w_1,...,w_n)$. Since all higher ideals are determined (see \eqref{e:higher-ideals-determined}), the claim amounts to proving that $\I_k=\I_k'$ for $k=1,...,r$. The condition $y^j\in \I_{w_j}$, and the multiplicative property $\I_l\I_m\subset \I_{l+m}$, together imply that the generators $y^s$, $s\cdot w\ge k$ of $\I_k'$ belong to $\I_k$, hence $\I_k'\subset \I_k$ for all $k$.

The hypothesis implies that $y^{p+1},...,y^n\in \I_1=\I_N$. After shrinking $U$ to a smaller open neighborhood $V$ of $N$ if necessary, we may assume that the common vanishing locus of $y^{p+1},...,y^n$ is exactly $N$, and hence $\psi=(y^1,...,y^n)$ is a submanifold chart for $N$ and $\I_1'=\I_N=\I_1$. Proceed by induction: let $1<k$ and suppose $\I_l=\I_l'$ for $l<k$. The quotient sheaf $\I_k/\I_{k-1}$ (resp. $\I_k'/\I_{k-1}'=\I_k'/\I_{k-1}$) is a $C^\infty_M/\I_1=C^\infty_M/\I_N=C^\infty_N=C^\infty_M/\I_1'$-module; it is locally free, with local frame given by $x^s+\I_{k-1}$ with $s\cdot w=k$ (resp. $y^s+\I_{k-1}'$ with $s\cdot w=k$). Since the weight sequence $w$ for $\I_\bullet$, $\I_\bullet'$ is the same, $\I_k/\I_{k-1}$, $\I_k'/\I_{k-1}'$ have the same rank. Since $\I_k'\subset \I_k$, there is a surjection
\[ \I_{k-1}'/\I_k'=\I_{k-1}/\I_k'\twoheadrightarrow \I_{k-1}/\I_k. \]
As observed above, the domain and codomain of this surjection are locally free of the same rank, and hence the morphism is an isomorphism. It follows that $\I_k'=\I_k$, completing the inductive step.
\end{proof}

\ignore{
\begin{proof}
Without loss of generality assume $M=U$ is an open subset of $\bR^n$, $(x^1,...,x^n)$ are the standard coordinates, and $N^p$ is the intersection of $U$ with the coordinate subspace $x^{p+1}=\cdots=x^n=0$ (which we assume is non-empty, as otherwise the claim is trivial). The two coordinate systems determine two sequences of ideals of $C^\infty(U)$ defined as in \eqref{e:wt-loc-form}:
\begin{equation} 
\label{e:wt-loc-form2}
I_k=(x^s\mid s\cdot w\ge k), \qquad I_k'=(y^s\mid s\cdot w\ge k),
\end{equation}
where $x=(x^1,...,x^n)$, $y=(y^1,...,y^n)$, $s=(s_1,...,s_n)$, $w=(w_1,...,w_n)$. The claim amounts to proving that $I_k=I_k'$ for all $k$. The condition $y^j\in I_{w_j}$, and the multiplicative property $I_lI_{l'}\subset I_{l+l'}$, together imply that the generators $y^s$, $s\cdot w\ge k$ of $I_k'$ belong to $I_k$, hence $I_k'\subset I_k$. To prove $I_k\subset I_k'$, it remains to show that $x^j \in I_{w_j}'$ for $j=1,...,n$, for then by a symmetrical argument with the roles of $x,y$ reversed we would have that $I_k\subset I_k'$. Since $I_0=C^\infty(U)$, the claim is automatic for $j$ such that $w_j=0$, that is, for $j=1,...,p$.

The statement allows that $U$ be replaced by an open neighborhood $V\subset U$ of $U\cap N=N$, or equivalently: we are allowed to replace $U$ by smaller open neighborhoods of $N$ as needed. The hypothesis implies that $y^{p+1},...,y^n\in I_1=I_N$. After shrinking $U$ if necessary, we may assume that the common vanishing locus of $y^{p+1},...,y^n$ is exactly $N$, and hence $\psi=(y^1,...,y^n)$ is a submanifold chart for $N$ and $I_1'=I_N=I_1$. Let $p+1\le j\le n$, hence $w_j\ge 1$. Since $y^j\in I_{w_j}$ there are $a^j_s\in C^\infty(U)$ such that
\begin{equation} 
\label{e:yj-ideal}
y^j=\sum_{s\cdot w\ge w_j} a^j_sx^s,
\end{equation}
where only finitely many of the $a^j_s$ are non-zero and we may also assume without loss of generality that the multi-indices $s=(0,...,0,s_{p+1},...,s_n)$ (monomials in $x^1,...,x^p$ can be absorbed into a redefinition of $a^j_s$). For $p+1\le l\le n$ let $s(l)$ be the multi-index with all components $0$ except for a $1$ in the $l$-th component, and let $c^j_l=a^j_{s(l)}$. Let
\begin{equation} 
\label{e:def-f}
f^j=\sum_{s\cdot w\ge w_j,|s|\ge 2}a^j_sx^s,
\end{equation} 
then $f^j \in I_N^2\cap I_{w_j}$ and
\begin{equation} 
\label{e:exp-of-yj}
y^j=\sum_{w_l\ge w_j} c^j_l x^l+f^j. 
\end{equation}
Since $(y^1,...,y^n)$ are coordinates and $f^j$ vanishes to second order on $N$, the matrix $c=[c^j_l]$ with $p+1\le j,l\le n$ is invertible along $N$, and hence in a neighborhood of $N$. After shrinking $U$ if necessary we may assume that $c$ is invertible on $U$. The condition $w_l\ge w_j$ under the summation in \eqref{e:exp-of-yj} means that $c$ is block upper triangular, where the sizes of the blocks equal the numbers of coordinates of each positive weight. Since the inverse of a block upper triangular matrix is also block upper triangular, solving for $x^j$ yields
\begin{equation} 
\label{e:exp-of-xj}
x^j=\sum_{w_l\ge w_j} \tilde{c}^j_l y^l-\tilde{c}^j_lf^l=\sum_{w_l\ge w_j} \tilde{c}^j_l y^l-\tilde{c}^j_l\sum_{s\cdot w\ge w_l,|s|\ge 2}a^l_sx^s,
\end{equation}
and $\tilde{c}=[\tilde{c}^j_l]=c^{-1}$ is block upper triangular having the same block sizes as $c$. Since $w_l\ge w_j\Rightarrow y^l\in I_{w_j}'$, the first term on the right hand side of \eqref{e:exp-of-xj} belongs to $I_{w_j}'$. The expression on the right hand side of \eqref{e:exp-of-xj} can be viewed as a polynomial $P([x^u],[y^v])$ in the variables $x^{p+1},...,x^n,y^{p+1},...,y^n$ (with coefficients that are smooth functions on $U$). Consider the operation $T$ on such polynomials that uses equation \eqref{e:exp-of-xj} to replace each instance of $x$, that is
\[ TP([x^u],[y^v])=P([\sum_{w_l\ge w_u} \tilde{c}^u_l y^l-\tilde{c}^u_l\sum_{s\cdot w\ge w_l,|s|\ge 2}a^l_sx^s],[y^v]), \]
resulting in a new polynomial in the variables $x^{p+1},...,x^n,y^{p+1},...,y^n$. Of course, because of equation \eqref{e:exp-of-xj}, applying $T$ to the right hand side of \eqref{e:exp-of-xj} yields a new and valid expression for $x^j$. 

The result of applying $T$ to a monomial $x^ty^{t'}$ yields
\begin{equation} 
\label{e:big-prod}
\prod_u \Big(\sum_{w_l\ge w_u} \tilde{c}^u_l y^l-\tilde{c}^u_l\sum_{s\cdot w\ge w_l,|s|\ge 2}a^l_sx^s\Big)^{t_u} \prod_v (y^v)^{t_v'}.
\end{equation}
Observe that the expression in brackets in \eqref{e:big-prod} is a sum of two contributions: a linear $y$-contribution and a non-linear $x$-contribution. Expanding the product \eqref{e:big-prod} results in a sum of new monomials, with each such new monomial obtained by choosing some number of factors from the $y$-contribution and some number of factors from the $x$-contribution. If $x^ty^{t'}\in I_mI_{m'}'$, then each new monomial obtained by expanding \eqref{e:big-prod} belongs to $I_{m-r}I_{m'+r}'$ for some $r\ge 0$ (different values of $r$ for each new monomial). Indeed, observe that every term in the $y$-contribution has $I_\bullet'$-weight at least as great as the $I_\bullet$-weight of the factor $x^u$ that it replaced (under $T$), and similarly every term in the $x$-contribution has $I_\bullet$-weight at least as great as the $I_\bullet$-weight of the factor $x^u$ that it replaced (under $T$); as the product that produces one of the new monomials involves choosing some number of factors from the $y$-contribution and some number of factors from the $x$-contribution, the resulting new monomial belongs to $I_{m-r}I_{m'+r}'$, where $0\le r=\sum_u n_uw_u$, $0\le n_u\le t_u$, is determined by the factors chosen from the $y$-contribution. If $x^ty^{t'}\in I_N^z$ then the new monomials produced by expanding \eqref{e:big-prod} each belong to $I_N^{z+q}$ for some $q\ge 0$ (different values of $z$ for each new monomial); this is clear because for $u,v\in \{p+1,...,n\}$, both $x^u$, $y^v$ vanish to order $1$ on $N$. Furthermore for the new monomials with $r=0$---meaning that no factors are chosen from the $y$-contribution---the corresponding $q>0$, as a consequence of the condition $|s|\ge 2$ on the monomial $x^s$ in the $x$-contribution. 

To summarize the considerations of the preceding paragraph: the result of applying $T$ to a monomial $x^ty^{t'} \in I_mI_{m'}'\cap I_N^z$ is a sum of new monomials, each of which belongs to $I_{m-r}I_{m'+r}'\cap I_N^{z+q}$ for some $r,q\ge 0$ (different values of $r,q$ for each new monomial), and moreover if $r=0$ then $q>0$. It follows that after applying $T$ at most $2w_j$ times to the right hand side of \eqref{e:exp-of-xj}, the result is a sum of monomials in $x,y$ each of which either belongs to $I_0I_{w_j}'=I_{w_j}'$, or else belongs to $I_N^{w_j}$. But since $I_N^{w_j}=(I_1')^{w_j}\subset I_{w_j}'$, in fact all terms belong to $I_{w_j}'$. Therefore $x^j\in I_{w_j}'$ as required.
\end{proof}
}

\section{Weighted normal bundle}\label{s:wt-normal}
In this section, following \cite{LMweightings}, we describe a variant of the normal bundle to a submanifold that incorporates information about a weighting $(M,N,\I_\bullet)$. The result is a fiber bundle $\nu_w(M,N)$ over $N$, the weighted normal bundle.

Let $(M^n,N,\I_\bullet)$ be a weighted manifold. Let
\[ \gr_{N,w}(C^\infty_M)=\bigoplus_{k\ge 0} \I_k/\I_{k+1} \]
be the associated graded, a sheaf of $C^\infty_M$-algebras, indeed even a sheaf of $C^\infty_M/\I_N=C^\infty_N$-algebras since $\I_N=\I_1$ acts trivially. The $\bZ_{\ge 0}$-grading determines an action $\kappa_{\bullet}^*\colon \bR\times \gr_{N,w}(C^\infty_M)\rightarrow \gr_{N,w}(C^\infty_M)$ of the monoid $(\bR,\cdot)$, where for $t \in \bR$, $\kappa_t^*$ acts on the summand $\gr_{N,w}^k(C^\infty_M)=\I_k/\I_{k+1}$ as scalar multiplication by $t^k$.
\begin{definition}
Let $(M,N,\I_\bullet)$ be a weighted manifold. The \emph{weighted normal bundle} to $N$ in $M$ is the character spectrum
\[ \nu_w(M,N)=\Spec(\gr_{N,w}(C^\infty_M)) \]
of the associated graded algebra. The $C^\infty_N$-algebra structure determines a morphism of differential spaces $\pi\colon \nu_w(M,N)\rightarrow N$, the quotient map $\gr_{N,w}(C^\infty_M)\rightarrow \gr_{N,w}^0(C^\infty_M)=\I_0/\I_1=C^\infty_N$ determines an inclusion $\iota\colon N\hookrightarrow \nu_w(M,N)$, and the $\bZ_{\ge 0}$-grading determines an action (by morphisms of differential spaces) $\kappa_\bullet\colon \bR\times \nu_w(M,N)\rightarrow \nu_w(M,N)$ of the monoid $(\bR,\cdot)$ such that $\kappa_0=\iota\circ \pi$.
\end{definition}

\begin{definition}
\label{d:homog-approx}
Let $(M,N,\I_\bullet)$ be a weighted manifold and let $f \in \I_k$. The \emph{weighted order $k$ homogeneous approximation to $f$} is the coset $f_{[k]}=f+\I_{k+1}\in \I_k/\I_{k+1}\subset \gr_{N,w}(C^\infty_M)$. The corresponding $\bR$-valued function on the spectrum $\nu_w(M,N)$ is denoted by the same symbol $f_{[k]}$, and is homogeneous of degree $k$ with respect to the action $\kappa$.
\end{definition}

\begin{theorem}[cf. Theorem 4.2 of \cite{LMweightings}]
\label{t:wt-norm}
The weighted normal bundle $\nu_w(M,N)$ is a manifold of dimension $n$.
\end{theorem}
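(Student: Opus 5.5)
We will apply Corollary \ref{c:spec-manifold} to the sheaf of $C^\infty_M$-algebras $\A=\gr_{N,w}(C^\infty_M)$. Since $M$ is second countable, it suffices to cover $M$ by countably many weighted coordinate charts $(U,\phi=(x^1,\dots,x^n))$ together with charts disjoint from $N$, and to show that $\S(U)=\Spec(\A(U))$ admits local coordinates in the sense of Definition \ref{d:local-coord}. If $U\cap N=\emptyset$ then $\I_1(U)=C^\infty(U)$, so $\A(U)=0$ and $\S(U)=\emptyset$; this case is trivial. So fix a weighted chart with $U\cap N\neq\emptyset$, where $x^1,\dots,x^p$ have weight $0$ and $x^{p+1},\dots,x^n$ have positive weights $w_j$. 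The candidate coordinates are the homogeneous approximations $x^j_{[w_j]}$ of Definition \ref{d:homog-approx}, viewed as functions on $\S(U)$; for $j\le p$ this is $x^j|_{U\cap N}\in C^\infty(U\cap N)=\I_0/\I_1$.

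The first step is an algebraic description of $\A(U)$. From \eqref{e:wt-loc-form}, $\I_k(U)/\I_{k+1}(U)$ is a free $C^\infty(U\cap N)$-module with basis the cosets of the monomials $x^s$, where $s$ involves only $x^{p+1},\dots,x^n$ and $s\cdot w=k$. Here one uses that any monomial with $s\cdot w>k$ lies in $\I_{k+1}$, and that a $C^\infty(U)$-combination of such monomials lies in $\I_{k+1}$ iff its coefficients vanish on $U\cap N$. The latter follows from a Taylor-expansion argument in the positive-weight variables, as in Lemma \ref{l:Ik-dyn}. Consequently
\[ \A(U)\cong C^\infty(U\cap N)[y^{p+1},\dots,y^n],\qquad y^j=x^j_{[w_j]}, \]
a polynomial algebra graded by $\deg y^j=w_j$. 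I expect this identification to be the main technical point, since one must check carefully that no further relations arise. This means showing that the graded pieces are genuinely free rather than merely generated.

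Next we compute the characters. A character of $C^\infty(U\cap N)[y^{p+1},\dots,y^n]$ is determined by its restriction to $C^\infty(U\cap N)$ and by the values on the $y^j$. By Example \ref{ex:milnor-exercise}, the restriction is evaluation at a point of $U\cap N$, which is identified with its coordinates $(x^1,\dots,x^p)\in \phi(U\cap N)$. The values on the $y^j$ are arbitrary real numbers. Hence the map $(x^1_{[0]},\dots,x^p_{[0]},y^{p+1},\dots,y^n)$ is a bijection from $\S(U)$ onto the open set $\phi(U\cap N)\times\bR^{n-p}\subset\bR^n$.

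Finally we verify smooth generation. Every element of $\A(U)$ is a polynomial in the $y^j$ with coefficients $g(x^1,\dots,x^p)$, so it is a smooth function of the proposed coordinates. A general local section of $C^\infty_{\S}$ is locally of the form $\varphi(a_1,\dots,a_k)$ with $a_i\in\A(U)$. Each such section is therefore locally a smooth function of the coordinates, and these local expressions glue because the coordinate map is injective, exactly as in the proof of Proposition \ref{p:homeo-smooth-gen}. Conversely, any $\varphi(x,y)$ with $\varphi$ smooth on the image lies in $C^\infty_{\S}(\S(U))$ by definition. Corollary \ref{c:spec-manifold} then shows that $\nu_w(M,N)$ is a manifold, and its dimension is $p+(n-p)=n$.
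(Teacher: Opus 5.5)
Your proposal is correct and follows the paper's proof essentially step for step. Both arguments work in a weighted chart, identify characters of $\gr_{N,w}(C^\infty_M(U))$ with a point of $U\cap N$ together with arbitrary values on the generators $x^j_{[w_j]}$, check smooth generation, and invoke Corollary \ref{c:spec-manifold}. Your explicit check that the graded pieces are free over $C^\infty(U\cap N)$, so that the algebra is genuinely a polynomial algebra, supplies a detail the paper uses only implicitly when it asserts that any choice of values on the generators extends to a character.
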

\begin{proof}
Let $(U,\phi=(x^1,...,x^n))$ be a weighted coordinate chart, where $x^1,...,x^p \in \I_0$ are the coordinates that restrict to coordinates on $N$, i.e. the coordinates of weight $0$. The algebra $\gr_{N,w}(C^\infty_M(U))$ is generated, as a $C^\infty_N(U)$-algebra, by the weighted homogeneous approximations $\{x^i_{[w_i]}\mid i>p\}$. Recall that an element $u\in \Spec(\gr_{N,w}(C^\infty_M(U)))$ is a character $\gr_{N,w}(C^\infty_M(U))\rightarrow \bR$. The restriction of that character to the sub-algebra $\gr_{N,w}^0(C^\infty_M(U))=C^\infty_N(U)$ is a character $u_0=\pi(u)\in \Spec(C^\infty_N(U))$, which is the same thing as a point $u_0\in U\cap N$ (see Example \ref{ex:milnor-exercise}). Then $u$ is an extension of $u_0$ to a homomorphism $\gr_{N,w}(C^\infty_M(U))\rightarrow \bR$; that extension is uniquely determined by its value on the generators $\{x^i_{[w_i]}\mid i>p\}$, and conversely any choice of values on these generators determines an extension of $u_0$ to a character of $\gr_{N,w}(C^\infty_M(U))$. It follows that $(x^1_{[w_1]},...,x^n_{[w_n]})\colon \pi^{-1}(U)\rightarrow \bR^n$ is a bijection onto the open subset $(x^1,...,x^p)(U)\times \bR^{n-p}\subset \bR^p\times \bR^{n-p}=\bR^n$. By definition $C^\infty_{\nu_w(M,N)}(\pi^{-1}(U))$ is generated by smooth functions of elements of $\gr_{N,w}(C^\infty_M(U))$, and the latter is generated by $C^\infty_N(U)$ together with polynomials in the generators $\{x^i_{[w_i]}\mid i>p\}$; since polynomials are examples of smooth functions and compositions of smooth functions are smooth functions, $C^\infty_{\nu_w(M,N)}(\pi^{-1}(U))$ is smoothly generated by $\{x^i_{[w_i]}\mid 1\le i\le n\}$, which are therefore local coordinates for $\pi^{-1}(U)$. By Corollary \ref{c:spec-manifold}, $\nu_w(M,N)$ is a manifold.
\end{proof}
The weighted normal bundle is in particular a fiber bundle over $N$, with structure group the group of diffeomorphisms of $\bR^{\tn{codim}(N)}$ that commute with the $(\bR,\cdot)$ action (a finite-dimensional Lie group of diffeomorphisms having polynomial components of appropriate weighted degrees). The $(\bR,\cdot)$ action makes $\nu_w(M,N)$ into an example of a \emph{graded bundle} over $N$ in the sense of Grabowski-Rodkievicz \cite{grabowski2012graded}, and moreover $\nu_w$ extends to a functor from the category of weighted manifolds to the category of graded bundles.

By generalities on graded bundles, $\nu_w(M,N)$ is \emph{non-canonically} isomorphic, as a graded bundle, to its \emph{linearization} $\nu_w(M,N)_{\tn{lin}}:=\nu(\nu_w(M,N),N)$, a vector bundle over $N$ which is $\bZ_{\ge 0}$-graded according to weights of the $\bR^\times$-action on the fibers; in this case the linearization can be canonically identified with the associated graded vector bundle $\gr(\nu(M,N))$. Linearization extends to a conservative functor from the category of graded bundles to the category of graded vector bundles. Analogous to the familiar description $\nu(M,N)=TM|_N/TN$ of the normal bundle, it is possible \cite{LMweightings} to obtain the weighted normal bundle $\nu_w(M,N)$ as a quotient of the jet subbundle $Q$ from Remark \ref{r:jets}.

\section{Weighted deformation space}
In this section, following \cite{LMweightings}, we describe a variant of the deformation space (or deformation to the normal cone) construction that incorporates information about a weighting $(M,N,\I_\bullet)$. The result is a manifold $\D_w(M,N)$, the weighted deformation space, that interpolates between $M$ and $\nu_w(M,N)$ in the sense that $\D_w(M,N)$ is equipped with a canonical submersion $t$ to $\bR$ such that $t^{-1}(1)=M$ and $t^{-1}(0)=\nu_w(M,N)$.

Let $(M^n,N,\I_\bullet)$ be a weighted manifold. For convenience set $\I_k=\I_0$ for $k<0$.
\begin{definition}
The \emph{Rees algebra} associated to the filtration $\I_\bullet$ of $C^\infty_M$ is the sheaf of $C^\infty_M$-algebras
\[ \R_{N,w}(C^\infty_M)=\{\sum t^{-k}f_k\mid f_k\in \I_k\},\]
where $t$ is an auxiliary variable, a subalgebra of Laurent polynomials in $t$ with coefficients in $C^\infty_M$ ($\I_k\I_l\subset \I_{k+l}$ guarantees that this is a subalgebra). The \emph{weighted deformation to the normal cone} or \emph{weighted deformation space} is the differential space
\[ \D_w(M,N)=\Spec(\R_{N,w}(C^\infty_M)).\]
The inclusions $\bR[t],C^\infty_M\hookrightarrow \R_{N,w}(C^\infty_M)$ induce a morphism of differential spaces $(t,\hat{\pi})\colon \D_w(M,N)\rightarrow \bR\times M$. The algebra automorphisms $t\mapsto \lambda^{-1} t$, $\lambda \in \bR^\times$ determine an action (by morphisms of differential spaces) $\hat{\kappa}_\bullet\colon \bR^\times \times \D_w(M,N)\rightarrow \D_w(M,N)$ called the \emph{zoom action}.
\end{definition}
The fibers of the map $t\colon \D_w(M,N)\rightarrow \bR$ are not difficult to describe. For every $\tau \ne 0$ there is a homomorphism 
\[ \ev_\tau\colon \R_{N,w}(C^\infty_M)\rightarrow C^\infty_M, \qquad t^{-k}f_k\mapsto \tau^{-k}f_k, \] 
this induces an inclusion of differential spaces 
\[ j_\tau\colon M\hookrightarrow \D_w(M,N),\] 
whose image is the fiber of $\D_w(M,N)$ over $0\ne \tau \in \bR$. There is a homomorphism 
\[ \ev_0\colon \R_{N,w}(C^\infty_M)\rightarrow \gr_{N,w}(C^\infty_M), \qquad t^{-k}f_k\mapsto (f_k)_{[k]}=f_k+\I_{k+1}, \] 
this induces an inclusion of differential spaces
\[ j_0\colon \nu_w(M,N)\hookrightarrow \D_w(M,N),\] 
whose image is the fiber of $\D_w(M,N)$ over $\tau=0$. Thus, as sets,
\[ \D_w(M,N)=\nu_w(M,N)\sqcup (\bR^\times \times M).\]
The restriction of $\hat{\pi}$ (resp. $\hat{\kappa}$) to $\nu_w(M,N)$ is $\pi$ (resp. $\kappa$) as defined in Section \ref{s:wt-normal}. 
\begin{definition}
\label{d:dnc-wt-ext}
Let $(M,N,\I_\bullet)$ be a weighted manifold and let $f \in \I_k$. The \emph{weighted order $k$ homogeneous extension of $f$} is $\hat{f}_{[k]}=t^{-k}f\in \R_{N,w}(C^\infty_M)$. The corresponding $\bR$-valued function on the spectrum $\D_w(M,N)$ is denoted by the same symbol $\hat{f}_{[k]}$, and is homogeneous of degree $k$ with respect to the action $\hat{\kappa}$; in fact, as $\bR^\times \times M\subset \D_w(M,N)$ is open dense, it is the unique homogeneous degree $k$ smooth function satisfying $\hat{f}_{[k]}|_{t^{-1}(1)}=f$. Moreover $\hat{f}_{[k]}|_{t^{-1}(0)}=f_{[k]}$ is the weighted order $k$ homogeneous approximation to $f$ (Definition \ref{d:homog-approx}).
\end{definition}
\begin{theorem}[cf. Theorem 5.1 of \cite{LMweightings}]
The weighted deformation space $\D_w(M,N)$ is a manifold of dimension $n+1$.
\end{theorem}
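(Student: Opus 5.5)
We mimic the proof of Theorem \ref{t:wt-norm} and apply Corollary \ref{c:spec-manifold} to the sheaf $\A=\R_{N,w}(C^\infty_M)$ of $C^\infty_M$-algebras. Since $M$ is second countable, it has a countable cover by weighted coordinate charts $(U,\phi=(x^1,\dots,x^n))$ with weights $w_i$, where $x^1,\dots,x^p$ are the weight-$0$ coordinates. It therefore suffices to show that for each such chart, $\S(U)=\Spec(\R_{N,w}(C^\infty_M)(U))$ admits local coordinates. The candidate coordinates are $t$ together with the homogeneous extensions
\[ \hat{x}^i_{[w_i]}=t^{-w_i}x^i,\qquad i=1,\dots,n, \]
so that $\hat x^i_{[0]}=x^i$ for $i\le p$.

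\textbf{Step 1: algebraic generation.} Show that $\R_{N,w}(C^\infty_M)(U)$ is generated as an algebra by $C^\infty(U)$, $t$, and $\{t^{-w_i}x^i\mid i>p\}$. Any element $t^{-k}f_k$ with $f_k\in\I_k(U)$ can be written as $f_k=\sum_{s\cdot w\ge k} a_s x^s$. Then
\[ t^{-k}f_k=\sum_s a_s\, t^{s\cdot w-k}\prod_i (t^{-w_i}x^i)^{s_i}, \]
with $s\cdot w-k\ge 0$, and the coefficients $a_s\in C^\infty(U)$ are smooth functions of $x^1,\dots,x^n$. Moreover $x^i=t^{w_i}\hat x^i_{[w_i]}$. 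Hence every element is a smooth function of $(t,\hat x^1_{[w_1]},\dots,\hat x^n_{[w_n]})$, \emph{provided} smooth functions $a(x)$ of $x=(t^{w}\hat x)$ make sense on the image. Since $x\circ\hat\pi=t^w\hat x$, this holds: $a\circ\hat\pi=a(t^{w_1}\hat x^1,\dots)$ is a smooth function of the candidate coordinates on the open set where the argument lies in $\phi(U)$. This gives the smooth generation condition of Definition \ref{d:local-coord}.

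\textbf{Step 2: bijectivity onto an open set.} A character $u$ of $\R_{N,w}(C^\infty_M)(U)$ restricts to characters of $\bR[t]$ and of $C^\infty(U)$, i.e.\ to a value $\tau=u(t)$ and a point $m=\hat\pi(u)\in U$ (Example \ref{ex:milnor-exercise}). The map $u\mapsto(\tau,u(\hat x^i_{[w_i]}))$ is injective by the generation statement of Step 1.

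For the image, claim it is
\[ \Omega=\{(\tau,y)\in\bR\times\bR^n\mid (y^1,\dots,y^p,\tau^{w_{p+1}}y^{p+1},\dots,\tau^{w_n}y^n)\in\phi(U)\}, \]
which is open. For $\tau\ne0$, surjectivity onto the slice follows from $\ev_\tau$, since $j_\tau$ gives the point $\phi^{-1}(y^1,\dots,\tau^{w}y)$. For $\tau=0$, surjectivity follows from the characters factoring through $\ev_0$, using the coordinates of $\nu_w$ from Theorem \ref{t:wt-norm}; the condition reduces to $(y^1,\dots,y^p,0,\dots,0)\in\phi(U)$, i.e.\ a point of $U\cap N$. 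Conversely, any character has these values: from $x^i=t^{w_i}\hat x^i_{[w_i]}$ we get $\phi(m)=(y^1,\dots,\tau^{w}y)$.

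\textbf{Step 3: the obstacle.} The main obstacle is the injectivity and well-definedness in Step 2 at $\tau=0$. One must check that a character with $u(t)=0$ really factors through $\ev_0$. If $f_{k}\in\I_{k+1}$, then $t^{-k}f_k=t\cdot t^{-k-1}f_k$, so $u(t^{-k}f_k)=0$; thus the character kills $\ker\ev_0$. Likewise, for $\tau\ne0$, $t$ is invertible under $u$, so $u$ factors through the localization, which is $C^\infty(U)[t,t^{-1}]$, and hence through $\ev_\tau$.

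With these steps the candidate functions are local coordinates on each $\S(U)$. Corollary \ref{c:spec-manifold} then shows that $\D_w(M,N)$ is a manifold of dimension $n+1$.
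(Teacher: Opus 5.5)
Your proposal is correct and follows essentially the same route as the paper: the same candidate coordinates $(t,\hat{x}^1_{[w_1]},\dots,\hat{x}^n_{[w_n]})$ on $\hat{\pi}^{-1}(U)$, the same open image, smooth generation by expanding $f_k\in\I_k(U)$ in the monomials $x^s$ (this is the paper's appeal to Hadamard's lemma), and then Corollary \ref{c:spec-manifold}. Your injectivity argument, via the algebra generators and the relation $x^i=t^{w_i}\hat{x}^i_{[w_i]}$, and your check that characters with $u(t)=0$ (resp.\ $u(t)\neq 0$) factor through $\ev_0$ (resp.\ $\ev_\tau$), make explicit what the paper's fiberwise description of $\hat{\phi}$ takes for granted.
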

\begin{proof}
Let $(U,\phi=(x^1,...,x^n))$ be a weighted coordinate chart where $x^1,...,x^p \in \I_0$ are the coordinates that restrict to coordinates on $N$, i.e. the coordinates of weight $0$. We claim that $\hat{\phi}=(t,\hat{x}^1_{[w_1]},...,\hat{x}^n_{[w_n]})$ are coordinates on the open subset $\hat{\pi}^{-1}(U)=\Spec(\R_{N,w}(C^\infty_M(U)))$. View $\hat{\phi}$ as a family of maps from the fibers of $t^{-1}(\tau)\subset \D_w(M,N)$ to $\{\tau\}\times \bR^n$: for $\tau\ne 0$, $\hat{\phi}|_{t^{-1}(\tau)}$ is a weighted re-scaling of the coordinate chart $\phi$, while $\hat{\phi}|_{t^{-1}(0)}$ is the coordinate chart for $\nu_w(M,N)$ from the proof of Theorem \ref{t:wt-norm}. From this description it is clear that $\hat{\phi}$ is injective. The image of $\hat{\phi}$ is the open subset
\[ \{(c,c_1,...,c_n)\in \bR^{n+1}\mid (c^{w_1}c_1,...,c^{w_n}c_n)\in \phi(U)\} \subset \bR^{n+1}.\]
By the definition of $\R_{N,w}(C^\infty_M(U))$ and Hadamard's lemma, the elements of $\R_{N,w}(C^\infty_M(U))$ are smooth functions of the variables $\{t\}\cup \{\hat{x}^i_{[w_i]}\mid 1\le i \le n\}$ and, similar to Theorem \ref{t:wt-norm}, $C^\infty_{\D_w(M,N)}(\hat{\pi}^{-1}(U))$ is smoothly generated by $\{t\}\cup \{\hat{x}^i_{[w_i]}\mid 1\le i\le n\}$, which are therefore local coordinates for $\hat{\pi}^{-1}(U)$. By Corollary \ref{c:spec-manifold}, $\D_w(M,N)$ is a manifold.
\end{proof}
\begin{proposition}
\label{p:Rees-alg-dyn}
If $F \in C^\infty_{\D_w(M,N)}$ is homogeneous of degree $k\in \bZ$ with respect to the zoom action, then $F=\hat{f}_{[k]}$ where $f=F|_{t=1}$. The Rees algebra $\R_{N,w}(C^\infty_M)$ is the sub-sheaf of $C^\infty_{\D_w(M,N)}$ consisting of finite sums of smooth functions that are homogeneous with respect to the zoom action.
\end{proposition}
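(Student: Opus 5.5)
The plan is to work locally in a weighted coordinate chart and use the coordinates $\hat\phi=(t,\hat{x}^1_{[w_1]},\dots,\hat{x}^n_{[w_n]})$ on $\hat\pi^{-1}(U)$ from the proof of the preceding theorem. Both claims are local over $M$, since homogeneity and membership in the Rees sheaf can be checked on a cover $\{\hat\pi^{-1}(U)\}$. They also glue by the sheaf property, and the uniqueness in Definition \ref{d:dnc-wt-ext} makes the local identifications agree.

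In these coordinates the zoom action reads
\[ \hat\kappa_\lambda(t,u^1,\dots,u^n)=(\lambda^{-1}t,\lambda^{w_1}u^1,\dots,\lambda^{w_n}u^n), \]
up to the sign convention coming from $t\mapsto\lambda^{-1}t$; I would fix that convention first. So a degree $k$ homogeneous smooth function $F$ satisfies $F(\lambda^{-1}t,\lambda^w u)=\lambda^k F(t,u)$, and its restriction $f=F|_{t=1}$ satisfies $F(t,u)=t^{-k}f(t^w u)$ for $t\neq 0$. It suffices to show $f\in\I_k(U)$. Then $\hat f_{[k]}$ and $F$ are smooth, homogeneous of degree $k$, and agree on $t^{-1}(1)$. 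Hence they agree on the dense open set $\bR^\times\times M$ by homogeneity, so everywhere, which gives $F=\hat f_{[k]}$. (For $k\le 0$ this is trivial, since $\I_k=C^\infty_M$.)

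To show $f\in\I_k$ I will use Lemma \ref{l:Ik-dyn}. Smoothness of $F$ at points of $t=0$ gives
\[ \lim_{\lambda\to 0}\lambda^{-k}f(\lambda^w x)=\lim_{\lambda\to0}F(\lambda,x)=F(0,x), \]
which exists. The main obstacle is that Lemma \ref{l:Ik-dyn} needs an open set invariant under weighted scalar multiplication, and $\phi(U)$ need not be one. I would first shrink $U$ around each point of $N$ to a product $V_0\times B$ with $B$ a ball in the positive-weight coordinates, which is invariant under $\lambda\in[0,1]$. The proof of Lemma \ref{l:Ik-dyn} only uses small $\lambda$ and Taylor expansion about $U_0$, so it applies there. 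Away from $N$, $\I_k$ is locally the whole algebra, and the points of $U\setminus N$ are covered by charts on which $\I_1=C^\infty$. Since $\I_k$ is a sheaf, these local memberships combine to give $f\in\I_k(U)$.

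For the second claim, finite sums $\sum t^{-k}f_k$ are sums of homogeneous functions by Definition \ref{d:dnc-wt-ext}. Conversely, a finite sum of homogeneous smooth functions is a sum of $\hat f_{[k]}=t^{-k}f_k$ with $f_k\in\I_k$ by the first part, and so it lies in $\R_{N,w}(C^\infty_M)$. To regard $\R_{N,w}(C^\infty_M)$ as a sub-sheaf of $C^\infty_{\D_w(M,N)}$, I will note that the map from the Rees algebra to functions on the spectrum is injective. Indeed, an element $\sum t^{-k}f_k$ restricted to $t=\tau\neq0$ gives $\sum\tau^{-k}f_k$, and vanishing for all $\tau$ forces each $f_k=0$ by Vandermonde.
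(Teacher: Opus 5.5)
Your proposal is correct and follows the same route as the paper. You write $F=t^{-k}f$ on $\bR^\times\times M$, use smoothness of $F$ at $t=0$ to get existence of $\lim_{\lambda\to0}\lambda^{-k}f(\lambda^w x)$, apply Lemma \ref{l:Ik-dyn} in a weighted chart to get $f\in\I_k$, and conclude by density. Beyond that, you shrink the chart so the lemma's invariance hypothesis holds (it is only needed for $|\lambda|\le 1$), and you check injectivity of the Rees algebra into functions for the second claim; both fill in points the paper leaves implicit.
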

\begin{proof}
Let $F\in C^\infty_{\D_w(M,N)}$ be homogeneous of degree $k$ with respect to the zoom action. On the open dense subset $\bR^\times \times M\subset \D_w(M,N)$, the zoom action is simply $\hat{\kappa}_\lambda(\tau,m)=(\lambda^{-1}\tau,m)$, hence on the intersection of the domain of $F$ with this open dense subset, $F$ must agree with $t^{-k}f$ where $f=F|_{t=1}\in C^\infty_M$. Restricting $f$ to the domain of a weighted coordinate chart $U$ and using Lemma \ref{l:Ik-dyn}, it follows that $f\in \I_k$. Thus $\hat{f}_{[k]}$ is defined, and since $F$, $\hat{f}_{[k]}$ agree on an open dense subset, they must be equal.
\end{proof}
The map $t\colon \D_w(M,N)\rightarrow \bR$ is a submersion intertwining $\hat{\kappa}$ with the weight $-1$ scaling action of $\bR^\times$ on $\bR$. The weighted deformation space construction extends to a functor $\D_w$ from the category of weighted manifolds to the category of $\bR^\times$-manifolds equipped with a $\bR^\times$-equivariant submersion to $\bR$, and moreover we have the following.
\begin{theorem}
\label{t:full-faithful}
The functor $\D_w$ is fully faithful.
\end{theorem}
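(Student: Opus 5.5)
Fully faithful means two things: faithfulness (injectivity on morphisms) and fullness (surjectivity on morphisms between $\D_w(M,N)$ and $\D_w(M',N')$). The key tool will be Proposition \ref{p:Rees-alg-dyn}, which recovers the Rees algebra, and hence the ideals $\I_k$, intrinsically from the $\bR^\times$-manifold $\D_w(M,N)$ as the sheaf of finite sums of zoom-homogeneous smooth functions.

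\emph{Faithfulness.} Let $f,g\colon (M,N,\I_\bullet)\to (M',N',\I'_\bullet)$ be weighted morphisms with $\D_w(f)=\D_w(g)$. Restricting to the fiber $t^{-1}(1)$, which is identified with $M$ via $j_1$ and with $M'$ via $j_1'$, gives $f=g$. This uses only that $\D_w(f)$ restricts to $f$ on $t=1$, which is clear from the construction: $\D_w(f)$ is induced by the algebra map $\R_{N',w}(C^\infty_{M'})\to \R_{N,w}(C^\infty_M)$, $t^{-k}h\mapsto t^{-k}f^*h$, which is compatible with $\ev_1$.

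\emph{Fullness.} Let $F\colon \D_w(M,N)\to \D_w(M',N')$ be a smooth $\bR^\times$-equivariant map commuting with the submersions to $\bR$.
\begin{enumerate}[1.]
\item Since $F$ preserves $t$, it maps $t^{-1}(1)=M$ into $t^{-1}(1)=M'$. Define the smooth map $f=F|_{t=1}\colon M\to M'$.
\item By equivariance, on the open dense set $\bR^\times\times M$ the map is $F(\tau,m)=(\tau,f(m))$. Indeed $\hat\kappa_\lambda(1,m)=(\lambda^{-1},m)$, so $F(\lambda^{-1},m)=\hat\kappa'_\lambda F(1,m)=(\lambda^{-1},f(m))$.
\item Show $f$ is weighted. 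Take $h\in\I'_k(U')$. The function $\hat h_{[k]}=t^{-k}h$ is smooth on $\hat\pi'^{-1}(U')$ and homogeneous of degree $k$. Then $F^*\hat h_{[k]}$ is smooth on the open set $F^{-1}(\hat\pi'^{-1}(U'))$ and homogeneous of degree $k$. Its restriction to $t=1$ is $f^*h$.
\item Apply Proposition \ref{p:Rees-alg-dyn} to conclude $f^*h\in\I_k$. This requires that the domain of $F^*\hat h_{[k]}$ contain $t^{-1}(0)\cap \hat\pi^{-1}(f^{-1}(U'))$, or at least be a zoom-invariant open set containing the relevant points of $\nu_w$ over $f^{-1}(U')$, and that the sheaf statement of the proposition (proved locally via Lemma \ref{l:Ik-dyn}) applies there.
\item Conclude $F=\D_w(f)$. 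The two maps are continuous and agree on the dense set $\bR^\times\times M$, and the target is Hausdorff, so they agree everywhere.
\end{enumerate}

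\emph{Main obstacle.} The delicate point is step 4, a domain issue: we must know that $F$ maps $\hat\pi^{-1}(f^{-1}(U'))$ into $\hat\pi'^{-1}(U')$, which is the same as $\hat\pi'\circ F=f\circ\hat\pi$. This identity holds on the dense set $\bR^\times\times M$, so by continuity it holds everywhere, since both sides are continuous maps into Hausdorff $M'$.

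Given this, $F^*\hat h_{[k]}$ is smooth and homogeneous on a zoom-invariant open set $\hat\pi^{-1}(V)$ with $V=f^{-1}(U')$. Passing to a weighted chart $V_0\subset V$, the functions $\tau^{-k}f^*h(\tau^{-w}\cdot x)$ extend smoothly to $\tau=0$. Lemma \ref{l:Ik-dyn}, applied in the weighted coordinates after shrinking to a scaling-invariant neighborhood of $N\cap V_0$, then gives $f^*h\in\I_k(V_0)$. Off $N$ the condition is trivial because $\I_k$ and $C^\infty$ agree away from $N$. The sheaf property finally gives $f^*h\in \I_k(V)$.
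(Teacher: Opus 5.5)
Your proof is correct and follows essentially the same route as the paper. Faithfulness comes from restricting to $t^{-1}(1)$, and fullness from equivariance on the dense open set $\bR^\times\times M$, pulling back $\hat{h}_{[k]}$, invoking Proposition \ref{p:Rees-alg-dyn}, and concluding by density. You obtain $\hat\pi'\circ F=f\circ\hat\pi$ by a density argument, which also makes explicit the domain issue the paper leaves implicit, whereas the paper uses limits of the zoom action to see that $f$ is a map of pairs; also, a harmless slip: in the coordinates $y^i=\hat{x}^i_{[w_i]}$ the function reads $\tau^{-k}(f^*h)(\tau^{w}\cdot y)$, not with $\tau^{-w}$.
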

\begin{proof}
Let $\M=(M,N,\I_\bullet)$, $\M'=(M',N',\I'_\bullet)$ be weighted manifolds. To avoid confusion we will use primes $'$ to denote corresponding objects for $\M'$: $t'$, $\kappa'$, and so on. It is clear that $\D_w\colon \Hom(\M,\M')\rightarrow \Hom(\D_w(M,N),\D_w(M',N'))$ is injective, since if $h\in \Hom(\M,\M')$ is a weighted morphism then $\D_w(h)|_{t^{-1}(1)}=h$. 

Conversely let $H\colon \D_w(M,N)\rightarrow \D_w(M',N')$ be a morphism in the category of $\bR^\times$-manifolds equipped with a $\bR^\times$-equivariant submersion to $\bR$. By assumption, for each $\tau \in \bR$, $H$ restricts to a smooth map $H_\tau$ between the fibers $t^{-1}(\tau)\subset \D_w(M,N)$ and $(t')^{-1}(\tau)\subset \D_w(M',N')$. By $\bR^\times$-equivariance and continuity, $H$ satisfies
\[ H(\lim_{\lambda\rightarrow a} \hat{\kappa}_\lambda(p))=\lim_{\lambda\rightarrow a} \hat{\kappa}'_\lambda H(p) \]
for $a=0,\infty$ when the limit on the left-hand-side exists. For $p \in t^{-1}(0)=\nu_w(M,N)$ and $a=0$ we deduce that $H_0$ maps $N$ to $N'$. For $p\in \{\tau\}\times N\subset t^{-1}(\tau)$ and $a=\infty$ we deduce that $H_\tau(p)=H_0(p)$. Thus $h:=H_1$ is a map of pairs $(M,N)\rightarrow (M',N')$. It also follows from the fact that $H$ is a $\bR^\times$-equivariant map of manifolds over $\bR$, that on the open dense subset $\bR^\times \times M\subset \D_w(M,N)$, $H$ is given by $H(\tau,m)=(\tau,h(m))$.

Let $f'\in \I_k'$. By $\bR^\times$-equivariance, $H^*(\hat{f}'_{[k]})$ is homogeneous of degree $k$, hence by Proposition \ref{p:Rees-alg-dyn}, $H^*(\hat{f}'_{[k]})=\hat{f}_{[k]}$ for some $f \in \I_k$. Thus $h^*\I_k'\subset \I_k$ and $h$ is a weighted morphism. As $\D_w(h)$, $H$ agree on the open dense subset $\bR^\times \times M\subset \D_w(M,N)$, continuity implies $\D_w(h)=H$.
\end{proof}

\begin{remark}
Let $(M,N,\I_\bullet)$ be a weighted manifold. Another useful construction is the \emph{weighted blow-up} of $M$ along $N$, which can be constructed as the quotient $(\D_w(M,N)-\bR\times N)/\bR^\times$. Note that if some of the weights are even, the $\bR^\times$ action is not quite free, and this space may have $\bZ/2\bZ$-orbifold singularities.
\end{remark}

\section{Fiber products}
In this section we discuss fiber products of weighted manifolds, largely following \cite{hudson2024multiplicative}. With the aim of keeping the article self-contained, we re-prove Hudson's result on fiber products for morphisms that are weighted transverse. We also prove that the weighted normal bundle and weighted deformation space constructions are compatible with fiber products.

\begin{definition}
Let $(M,N,\I_\bullet)$ be a weighted manifold. A \emph{weighted submanifold} is a submanifold $S\subset M$ such that every point $p \in S\cap N$ admits a weighted coordinate chart that is also a submanifold chart for $S$.
\end{definition}
\begin{remark}
Any submanifold of $M-N$ or of $N$ is weighted. Any submanifold that intersects $N$ transversely is weighted. If $M$ is trivially weighted along $N$, then a weighted submanifold is the same thing as a submanifold $S$ of $M$ that intersects $N$ cleanly (i.e. $S\cap N$ is smooth and $T(S\cap N)=TS\cap TN$).
\end{remark}
It is more or less immediate from the definition that a weighted submanifold $S$ inherits a weighting along $S\cap N$ such that $S\hookrightarrow M$ is a weighted morphism, and moreover $T_{S\cap N}S_{(i)}=TS\cap T_NM_{(i)}$. 
\begin{remark}
By \cite[Proposition 2.11]{hudson2024multiplicative}, if $r\colon M\rightarrow M$ is a weighted morphism satisfying $r\circ r=r$ then $S=r(M)$ is a weighted submanifold.
\end{remark}

\begin{definition}
A weighted morphism $f\colon (M,N,\I_\bullet)\rightarrow (M',N',\I'_\bullet)$ is a \emph{weighted submersion} if $f$ is a submersion and the induced morphism $\gr(\d_Nf)\colon \gr(T_NM)\rightarrow \gr(T_{N'}M')$ between the associated graded vector bundles is surjective.
\end{definition}
\begin{remark}
\label{r:wt-sub-loc-sec}
By \cite[Theorem 2.22]{hudson2024multiplicative}, a weighted morphism $f$ is a weighted submersion if and only if for every $p \in M$ there exist weighted coordinate charts $U\ni p$ and $U'\supset f(U)$ respectively, for which $f|_U$ is a coordinate projection. It follows that weighted submersions admit weighted local sections passing through any point of the domain, and conversely a weighted morphism admitting weighted local sections through any point of the domain is a weighted submersion.
\end{remark}

\begin{definition}
Let $(M,N,\I_\bullet)$, $(M',N',\I'_\bullet)$, $(M'',N'',\I''_\bullet)$ be weighted manifolds. Weighted morphisms $f\colon M\rightarrow M''$, $f'\colon M'\rightarrow M''$ are \emph{weighted transverse} if they are transverse and the induced vector bundle morphisms $\gr(\d_Nf),\gr(\d_{N'}f')$ are transverse.
\end{definition}
In detail, the transversality condition for the associated graded bundles says that for each $k=0,1,2,...,r$, and for all $(p,p')\in N\times_{N''}N'$ with $p''=f(p)=f'(p')$,
\[ \gr^k(\d_pf)(\gr^k(T_pM))+\gr^k(\d_{p'}f')(\gr^k(T_{p'}M'))=\gr^k(T_{p''}M'').\]
In particular for $k=0$, $f|_N,f'|_{N'}$ are transverse as maps to $N''$, hence $N\times_{N''}N'$ is a smooth submanifold of $M\times_{M''}M'$.

\begin{theorem}[cf. Theorem 2.27 of \cite{hudson2024multiplicative}]
\label{t:fiber-product}
Let $(M,N,\I_\bullet)$, $(M',N',\I'_\bullet)$, and $(M'',N'',\I''_\bullet)$ be weighted manifolds. Let $f\colon M\rightarrow M''$, $f'\colon M'\rightarrow M''$ be weighted morphisms that are weighted transverse. The fiber product $M\times_{M''}M'$ is a weighted submanifold of the product $M\times M'$.
\end{theorem}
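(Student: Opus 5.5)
The plan is to work locally near a point $(p,p')\in N\times_{N''}N'$. Away from $N\times N'$ there is nothing to prove, since any submanifold of the complement of the submanifold $N\times N'$ is weighted. Here $M\times M'$ carries the product weighting along $N\times N'$: in product charts the weights are concatenated, so $\I_k^{M\times M'}=\sum_{i+j=k}\I_i\otimes\I'_j$. Transversality of $f,f'$ gives that $P=M\times_{M''}M'=(f\times f')^{-1}(\Delta_{M''})$ is a submanifold of dimension $\dim M+\dim M'-\dim M''$. Moreover $P\cap(N\times N')=N\times_{N''}N'$, and this is smooth by the $k=0$ transversality. What remains is to produce, around $(p,p')$, a weighted coordinate chart of $M\times M'$ that is also a submanifold chart for $P$.

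The approach is to choose a weighted chart $(U'',y^1,\dots,y^m)$ of $M''$ around $p''$, with $y^j$ of weight $w''_j$. The defining functions of $P$ are $g^j=f^*y^j-f'^*y^j$, and since $f,f'$ are weighted morphisms, $g^j\in\I^{M\times M'}_{w''_j}$. The graded transversality condition says exactly that, degree by degree, the differentials $\d g^j|_{(p,p')}$ have linearly independent images in $\gr^{w''_j}(T^*_{(p,p')}(M\times M'))$. Indeed, the weight-$k$ block of $\d g$ is $\gr^k(\d f)^*-\gr^k(\d f')^*$ applied to $\gr^k(T^*M'')$, and this map is injective precisely when $\gr^k(\d f)+\gr^k(\d f')$ is surjective. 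Note that $\gr^k(T^*)$ is the dual of $\gr^k(T)$.

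Next, I would complete $\{g^j\}$ to a full coordinate system. Take a weighted chart $(x^1,\dots,x^N)$ of $M\times M'$ at $(p,p')$ (a product chart). For each weight $k$, the classes of the $x^i$ of weight $k$ give a frame of $\gr^k(T^*)$ near $(p,p')$, after restricting to $N\times N'$. So I can choose a subset of those $x^i$ of weight $k$ whose classes, together with the classes of the $g^j$ with $w''_j=k$, form a frame. The resulting system $(g^j, x^{i})_{\text{chosen}}$ has each function in $\I_{\text{its weight}}$, and it has the same weight multiplicities as the original chart. Its differentials at $(p,p')$ are independent: a graded-independent family of covectors in the filtered space $T^*_{(p,p')}$ is independent, since each $\gr^k$ piece injects through the filtration. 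So by the inverse function theorem it is a coordinate chart near $(p,p')$.

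Proposition \ref{p:criterion-for-wt-chart} then implies that, on a neighborhood of $N\times N'$ near $(p,p')$, this is a weighted coordinate chart. It is a submanifold chart for $P=\{g=0\}$ by construction. The main obstacle I anticipate is the identification of graded transversality with graded independence of the $\d g^j$. This requires care that $\gr^k(\d f)^*$ on $\gr^k(T^*)$ is computed by $f^*y+\I_{k+1}+\I_k\cap\I_N^2$, matching the description of $\gr^k(T^*_NM)$ given in Section 3. A second subtlety is applying Proposition \ref{p:criterion-for-wt-chart}, which is stated for two charts on the same $U$. I would handle this by shrinking to a common domain around $(p,p')$ on which both charts are defined.
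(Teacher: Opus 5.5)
Your proposal is correct and follows essentially the same route as the paper. You build defining functions $f^*z^j-(f')^*z^j$ from a weighted chart of $M''$, get graded linear independence of their differentials from the dual of the weighted-transversality surjection, complete them by a subset of the weight-$k$ coordinates of a weighted chart of $M\times M'$ (the paper's replacement-lemma step), and conclude with Proposition \ref{p:criterion-for-wt-chart}. The extra points you flag are harmless details the paper leaves implicit: points off $N\times N'$, the duality $\gr^k(T^*)\cong\gr^k(T)^*$, and shrinking to a common domain.
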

\begin{proof}
$M\times M'$ carries the product weighting $\I^{M\times M'}_{k}=C^\infty_{M\times M'}\cdot \sum_{i+j=k}\pi_1^*\I^M_{i}\cdot \pi_2^*\I^{M'}_{j}$. Let $(p,p')\in N\times_{N''}N'=M\times_{M''}M'\cap (N\times N')$, and we must exhibit weighted coordinates around $(p,p')$ that are also submanifold coordinates for $M\times_{M''}M'\subset M\times M'$. Choose a weighted coordinate system $x^1,...,x^{n+n'}$ for $M\times M'$ near $(p,p')$. For each $k$, let $J_k\subset \{1,...,n+n'\}$ be the subset such that $j\in J_k \Leftrightarrow w_j=k$, in other words $\{x^j\mid j \in J_k\}$ is the subset of the coordinates that have weight $k$. Let $z^1,...,z^{n''}$ be weighted coordinates for $M''$ near $p''=f(p)=f'(p')$ with weights $w_1'',...,w_{n''}''$. Let
\[ y^j=f^*z^j-(f')^*z^j, \qquad j=1,...,n''.\]
Since $f,f'$ are weighted, $y^j\in \I^{M\times M'}_{w_j''}$. The functions $\{y^j\mid j=1,...,n''\}$ cut out the fiber product $M\times_{M''}M'$ near $(p,p')\in M\times M'$. Weighted transversality implies that the map induced by $\d_pf\oplus (-\d_{p'}f')\colon T_pM\oplus T_{p'}M'\rightarrow T_{p''}M''$ on the $k$-th pieces of the associated graded spaces is surjective, thus the dual map induced by $((\d_pf)^*,-(\d_{p'}f')^*)\colon T_{p''}^*M''\rightarrow T_p^*M\oplus T_{p'}^*M'$ on the $k$-th pieces of the associated graded is injective, and hence the set of images of the differentials
\[ \{\d_{(p,p')}y^j=((\d_pf)^*\d z^j,-(\d_{p'}f')^*\d z^j)\mid w_j''=k, j=1,...,n''\} \]
in $\gr^k(T^*_pM\oplus T^*_{p'}M')$ is linearly independent. By the replacement lemma in linear algebra, for each $k$ there is a subset $\tilde{J}_k\subset J_k$, such that the images of the differentials
\begin{equation} 
\label{e:basis-wt-k}
\{\d_{(p,p')} y^j\mid w_j''=k,j=1,...,n''\}\cup \{\d_{(p,p')} x^j\mid j \in \tilde{J}_k\} 
\end{equation}
in $\gr^k(T^*_pM\oplus T^*_{p'} M')$ form a basis of $\gr^k(T^*_pM\oplus T^*_{p'} M')$. Let $\tilde{J}=\cup_k \tilde{J}_k$. Then $\{y^j\mid j=1,...,n''\}\cup \{x^j\mid j \in \tilde{J}\}$ is a system of coordinates having the correct weights (with multiplicities) to form a weighted coordinate chart, hence by Proposition \ref{p:criterion-for-wt-chart} these coordinates form a weighted coordinate chart near $(p,p')$; by construction this is also a system of submanifold coordinates for $M\times_{M''}M'\subset M\times M'$.
\end{proof}

\begin{proposition}
\label{p:fiber-prod-def-space}
Let $(M,N,\I_\bullet)$, $(M',N',\I'_\bullet)$, and $(M'',N'',\I''_\bullet)$ be weighted manifolds. Let $f\colon M\rightarrow M''$, $f'\colon M'\rightarrow M''$ be weighted morphisms that are weighted transverse. Then $\nu_w(f),\nu_w(f')$ (resp. $\D_w(f),\D_w(f')$) are transverse and the canonical morphisms are isomorphisms
\begin{align*} 
\nu_w(M\times_{M''}M',N\times_{N''}N')&\xrightarrow{\sim} \nu_w(M,N)\times_{\nu_w(M'',N'')}\nu_w(M',N'),\\
\D_w(M\times_{M''}M',N\times_{N''}N')&\xrightarrow{\sim} \D_w(M,N)\times_{\D_w(M'',N'')}\D_w(M',N'),
\end{align*} 
in the category of graded bundles, resp. the category of $\bR^\times$-manifolds equipped with a $\bR^\times$-equivariant submersion to $\bR$.
\end{proposition}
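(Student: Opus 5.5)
The plan is to work locally in adapted weighted coordinates, reducing everything to the linear-algebra picture of Theorem \ref{t:fiber-product}, and then to use the coordinate descriptions of $\nu_w$ and $\D_w$ from Theorem \ref{t:wt-norm} and the theorem on $\D_w$. Throughout, write $P=M\times_{M''}M'$ and $Q=N\times_{N''}N'$. By Theorem \ref{t:fiber-product}, $P$ is a weighted submanifold of $M\times M'$ along $Q$, so $\nu_w(P,Q)$ and $\D_w(P,Q)$ are defined. The two projections $P\to M$, $P\to M'$ are weighted morphisms, since they are restrictions of the weighted projections of $M\times M'$. Functoriality of $\nu_w$ and $\D_w$ then produces the canonical maps into the fiber products, once we know these fiber products exist. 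Note also that $\nu_w$ and $\D_w$ commute with finite products. This follows because $\D_w(M\times M',N\times N')$ is the fiber product $\D_w(M,N)\times_\bR \D_w(M',N')$: in weighted coordinates, the product weighting has coordinates $(x,x')$ with their weights, and the chart $(t,\hat{x}_{[w]},\hat{x}'_{[w']})$ identifies both sides.

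The first step is transversality. Since $\D_w(M,N)\supset \bR^\times\times M$ is open dense, over $t\neq 0$ the maps $\D_w(f)$, $\D_w(f')$ are just $\mathrm{id}\times f$ and $\mathrm{id}\times f'$, which are transverse over $\bR$. At $t=0$, the differential of $\nu_w(f)$ at a point of $\nu_w(M,N)$ lying over $p\in N$ is controlled by its linearization. The weighted tangent space splits, non-canonically, as $T_pN\oplus\gr(\nu(M,N))_p$, and the map becomes $\gr(\d_Nf)$ on the graded pieces plus higher polynomial corrections, which are triangular with respect to weight. Surjectivity of the sum $\gr(\d_pf)+\gr(\d_{p'}f')$ is exactly the weighted transversality hypothesis. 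A triangularity (filtration) argument then upgrades this to transversality of $\d\nu_w(f)$ and $\d\nu_w(f')$ at all points of the fiber over $(p,p')$, not only on the zero section. This works because $\kappa$-homogeneity makes the differential at a general point a filtered map whose associated graded is the one at the zero section. Transversality of the $\D_w$ maps at $t=0$ follows by adding the $\partial_t$ direction. Hence the fiber products are manifolds.

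The main step, and the expected obstacle, is showing that the canonical map is a diffeomorphism. I would use the coordinates built in the proof of Theorem \ref{t:fiber-product}. Near $(p,p')$ there are weighted coordinates $\{y^j\}\cup\{x^j\mid j\in\tilde J\}$ on $M\times M'$ with $y^j=f^*z^j-(f')^*z^j$, and $P=\{y=0\}$. Passing to $\D_w(M\times M')=\D_w(M)\times_\bR\D_w(M')$, the homogeneous extensions satisfy $\hat y^j_{[w_j'']}=\D_w(f)^*\hat z^j_{[w''_j]}-\D_w(f')^*\hat z^j_{[w''_j]}$. Thus, in the chart $(t,\hat y,\hat x_{\tilde J})$, the fiber product $\D_w(M)\times_{\D_w(M'')}\D_w(M')$ is locally $\{\hat y=0\}$, because $\hat z$ together with $t$ are coordinates on $\D_w(M'')$. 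On the other hand, $\D_w(P,Q)$ is the same set: since $(y,x_{\tilde J})$ is a weighted submanifold chart, the restrictions $x_{\tilde J}|_P$ are weighted coordinates on $P$, and $\D_w$ of the inclusion is the coordinate inclusion $\{\hat y=0\}$. The delicate point is checking that the induced weighting on $P$ coincides with the one whose ideals are generated by these restricted coordinates, so that the Rees algebras match. This identification is essentially the content of the weighted submanifold definition, and I expect it to require the most care.

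Restricting to $t=0$ gives the $\nu_w$ statement. Compatibility with the $\kappa$- and zoom actions and with $t$ is automatic from functoriality, so the isomorphisms hold in the stated categories.
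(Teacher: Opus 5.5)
Your proof is correct, but it is organized differently from the paper's.

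\textbf{The paper's route.} The paper treats $\nu_w$ first and without coordinates. It identifies the linearization of $\nu_w(f)$ with $\gr(\nu(f))$. It notes that the linearization of $\nu_w(M\times_{M''}M',N\times_{N''}N')$ is $\gr(\nu(M,N))\times_{\gr(\nu(M'',N''))}\gr(\nu(M',N'))$. It then invokes conservativity of the linearization functor on graded bundles. For $\D_w$, it observes that the canonical map is a diffeomorphism on the fibers over $\tau\neq 0$ and over $\tau=0$. It concludes because a fiberwise diffeomorphism over $\bR$ between manifolds submersing onto $\bR$ is a diffeomorphism.

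\textbf{Your route.} You go the other way. You prove the $\D_w$ statement directly in the chart $(t,\hat y,\hat x_{\tilde J})$. Using $\hat y^j_{[w''_j]}=\D_w(f)^*\hat z^j_{[w''_j]}-\D_w(f')^*\hat z^j_{[w''_j]}$, you show that the image of $\D_w(M\times_{M''}M')$ and the fiber product are both the slice $\{\hat y=0\}$. You then restrict to $t=0$ to get the $\nu_w$ statement.

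\textbf{Comparison.} Your argument is self-contained: it needs nothing from the theory of graded bundles beyond the coordinates already built in Theorem \ref{t:fiber-product}. It also gives transversality near the relevant points for free, since the $\hat y$ belong to a chart. The paper's argument is shorter and more functorial. Your filtered-differential argument for transversality off the zero section also fills in a step the paper only asserts tersely, namely that transverse linearizations imply transverse $\nu_w(f),\nu_w(f')$.

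\textbf{The point you flagged as delicate.} This is in fact immediate. The weighting inherited by the weighted submanifold $P$ is the restriction $\iota^*\I^{M\times M'}_\bullet$. In the chart $(y,x_{\tilde J})$ with $P=\{y=0\}$, every monomial containing some $y^j$ restricts to zero. Hence $\iota^*\I^{M\times M'}_k$ is generated by the restricted monomials in $x_{\tilde J}$ of weight $\ge k$.

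\textbf{What remains to write out.} You still need the routine globalization. The canonical map is the identity over $t\neq 0$ and commutes with the projections to $M\times M'$. Over each such chart it is a diffeomorphism onto $\{\hat y=0\}$. It is therefore a bijective local diffeomorphism, hence a diffeomorphism.
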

\begin{proof}
The linearization of $\nu_w(f)\colon \nu_w(M,N)\rightarrow \nu_w(M'',N'')$ along $N$ is obtained by applying the normal bundle functor to $\nu_w(f)$, and yields the morphism induced on associated graded bundles:
\[ \nu(\nu_w(f))=\gr(\nu(f))\colon \nu(\nu_w(M,N),N)\simeq \gr(\nu(M,N))\rightarrow \nu(\nu_w(M'',N''),N'')\simeq \gr(\nu(M'',N'')),\]
and similarly for the linearization of $\nu_w(f')$. By hypothesis, the linearizations $\gr(\nu(f))$, $\gr(\nu(f'))$ are transverse, and hence $\nu_w(f),\nu_w(f')$ are transverse. Recall that $\D_w(M'',N'')$ has a surjective submersion to $\bR$, with fibers $\D_w(M'',N'')=\nu_w(M'',N'')\sqcup (\bR^\times \times M'')$ (and similarly for $M,M'$). Transversality of $f,f'$ implies that $\D_w(f),\D_w(f')$ are transverse over the subset $\bR^\times \times M''\subset \D_w(M'',N'')$, and as just explained, the morphisms induced on $0$-fibers $\nu_w(f),\nu_w(f')$ are also transverse; since $\D_w(M,N),\D_w(M',N'),\D_w(M'',N'')$ are equipped with compatible surjective submersions to $\bR$, it follows that $\D_w(f),\D_w(f')$ are also transverse. 

Applying the functor $\nu_w$ (resp. $\D_w$) to the weighted morphisms given by projection to the first and second factors $M\times_{M''}M'\rightarrow M,M'$ yields the canonical morphisms in the statement of the proposition.

The linearization of $\nu_w(M\times_{M''}M',N\times_{N''}N')$ along $N\times_{N''}N'$ is the associated graded 
\[ \gr(\nu(M\times_{M''}M',N\times_{N''}N'))\simeq \gr(\nu(M,N))\times_{\gr(\nu(M'',N''))}\gr(\nu(M',N')) \]
and it follows that the morphism of graded bundles $\nu_w(M\times_{M''}M',N\times_{N''}N')\rightarrow \nu_w(M,N)\times_{\nu_w(M'',N'')}\nu_w(M',N')$ induces an isomorphism on associated graded bundles, and hence is itself an isomorphism. The morphism $\D_w(M\times_{M''}M',N\times_{N''}N')\rightarrow\D_w(M,N)\times_{\D_w(M'',N'')}\D_w(M',N')$ is trivially a diffeomorphism over the open subset $\bR^\times \times M''\subset \D_w(M'',N'')$, and also over the $0$-fiber $\nu_w(M'',N'')$ as just explained; these observations together with the fact that $\D_w(M\times_{M''}M',N\times_{N''}N')$, $\D_w(M,N)\times_{\D_w(M'',N'')}\D_w(M',N')$ are equipped with compatible surjective submersions to $\bR$, together imply that the map $\D_w(M\times_{M''}M',N\times_{N''}N')\rightarrow\D_w(M,N)\times_{\D_w(M'',N'')}\D_w(M',N')$ is a diffeomorphism.
\end{proof}

\section{Multiplicative weightings}
In this section we recall the definition of multiplicative weightings of Lie groupoids along their units from \cite{hudson2024multiplicative}, and prove that in this context the weighted normal bundle and weighted deformation space are themselves Lie groupoids.

Let $G\rightrightarrows M$ be a Lie groupoid equipped with a weighting along $M$. Equip $M$ with the trivial weighting for which the submanifold $N=M$. The source map $s\colon G\rightarrow M$ and the target map $t\colon G \rightarrow M$ are weighted submersions, and the unit embedding $u\colon M\hookrightarrow G$ is also a weighted morphism. By Theorem \ref{t:fiber-product}, the space of composable arrows $G^{(2)}=G\times_M G$ is a weighted submanifold of the product $G\times G$.

\begin{definition}
\label{d:mult-weighting}
Let $G\rightrightarrows M$ be a Lie groupoid. A \emph{multiplicative weighting} of $G$ along $M$ is a weighting of $G$ along $M$ such that the groupoid multiplication
\[ m\colon G^{(2)}\rightarrow G \]
is a weighted morphism.
\end{definition}
\begin{proposition}
\label{p:inversion-weighted}
Let $G\rightrightarrows M$ be a Lie groupoid equipped with a multiplicative weighting along $M$. Then the inversion map is a weighted morphism from $G$ to itself.
\end{proposition}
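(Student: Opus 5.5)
Inversion $i\colon G\rightarrow G$ will be written as a composition of weighted morphisms. The standard identity is $i(g)=m(\ldots)$, but inversion is not directly a composition of multiplication with something obvious. A cleaner route is a weighted morphism $\Phi\colon G^{(2)}\rightarrow G^{(2)}$, $\Phi(g,h)=(gh,h^{-1})$, or better the map $\Psi\colon G\times_{s,s}G\rightarrow G^{(2)}$ with $\Psi(a,b)=(a b^{-1},b)$. Its inverse is $(g,h)\mapsto (gh,h)$, which is clearly weighted since it has components $m$ and $\pr_2$. So the plan is as follows. First, $G\times_{s,s}G$ is a weighted submanifold of $G\times G$ by Theorem~\ref{t:fiber-product}, since $s$ is a weighted submersion and hence weighted transverse to itself. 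Second, the map $\Theta\colon G^{(2)}\rightarrow G\times_{s,s}G$, $(g,h)\mapsto(gh,h)$, is a diffeomorphism, with inverse $\Psi(a,b)=(ab^{-1},b)$. Third, $\Theta$ is weighted: its components $m$ and $\pr_2$ are weighted, and the weighting on a weighted submanifold of a product is the restricted product weighting, so it suffices that the pullbacks of $\pi_1^*\I_i\cdot\pi_2^*\I_j$ lie in $\I^{G^{(2)}}_{i+j}$.

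The key step is showing that $\Theta^{-1}$ is also weighted, that is, $\Theta$ is a weighted isomorphism. I will use the following device. A diffeomorphism that is a weighted morphism between weighted manifolds of the same weight sequences, and that restricts to a bijection $N\to N'$ (here $M\times_M M\simeq M$ on both sides), is a weighted isomorphism. To prove this, take weighted coordinates $y^j$ on the target. Their pullbacks $\Theta^*y^j$ are coordinates lying in $\I_{w_j}$, and they carry the same weights with multiplicities. Then Proposition~\ref{p:criterion-for-wt-chart} shows they form a weighted chart near the units, which gives $\Theta^*\I'_k=\I_k$. Away from the units, all ideals $\I_k$ with $k\ge1$ are trivial ($\I_k=C^\infty$ off $N$), so nothing needs checking there. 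To match the weight sequences, I compare the associated graded normal bundles. Both $G^{(2)}$ and $G\times_{s,s}G$ along $M$ have $\gr$ equal to $\gr(A)\oplus\gr(A)$: in fiber product coordinates from the proof of Theorem~\ref{t:fiber-product}, each is locally weighted-isomorphic to the product weighting of $G$ with $s$-fibers. Moreover $\gr(\d\Theta)$ is the map $(a,b)\mapsto(a+b,b)$, which is invertible.

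Once $\Theta^{-1}=\Psi$ is known to be weighted, $i$ is a composite of weighted morphisms. The unit section $u\circ s\colon G\rightarrow G$ is weighted because $s$ and $u$ are. The map $G\rightarrow G\times_{s,s}G$, $b\mapsto (u(t(b))\ldots)$ needs care, so I use $b\mapsto(u(s(b)),b)\in G\times_{s,s}G$. This is weighted into the product and lands in the weighted submanifold, hence is weighted; a submanifold chart argument shows that weightedness into $M\times M'$ implies weightedness into the submanifold. Applying $\Psi$ gives $(b^{-1},b)$, and projecting to the first factor yields $i$. The main obstacle is the third step, namely verifying carefully that $\gr(\d\Theta)$ is an isomorphism in each graded degree (equivalently that the weight sequences agree), since Proposition~\ref{p:criterion-for-wt-chart} needs exactly matching weights.
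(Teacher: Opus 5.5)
Your argument is correct and is essentially the paper's. In both, a shear diffeomorphism between two fiber products is built from $m$ and a projection; it is weighted, and both fiber products carry the same weight sequence (the nonzero weights of $G$ with doubled multiplicity), so its inverse is weighted. Inversion is then written as a unit-insertion map, followed by that inverse, followed by a projection. The differences are minor: the paper uses $\tau(g_1,g_2)=(g_1,g_1g_2)$ from $G\times_{s,M,t}G$ to $G\times_{t,M,t}G$ and gets $\tau^{-1}$ weighted from the weighted-local-sections characterization in Remark~\ref{r:wt-sub-loc-sec}. You instead use the mirror-image map $(g,h)\mapsto(gh,h)$ into $G\times_{s,s}G$ and invert it by pulling back weighted coordinates and applying Proposition~\ref{p:criterion-for-wt-chart}, which keeps your proof within results proved in the paper.
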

\begin{proof}
Let $\tau\colon G\times_{s,M,t}G\rightarrow G\times_{t,M,t}G$ be the map $\tau(g_1,g_2)=(g_1,g_1g_2)$ with components $(\pr_1,m)$, where $\pr_1$ is projection to the first factor. Since $\pr_1,m$ are weighted morphisms, $\tau$ is a weighted morphism. Since $G$ is a Lie groupoid, $\tau$ is a diffeomorphism. The fiber products $G\times_{s,M,t}G$ and $G\times_{t,M,t}G$ are weighted along the obvious diagonally embedded copies of $M$ and have the same weight sequence (given by doubling the multiplicity of each non-zero weight for the weighting of $G$ along $M$). Thus the injective induced maps on filtered pieces $T_M(G\times_{s,M,t}G)_{(i)}\rightarrow T_M(G\times_{t,M,t}G)_{(i)}$ are isomorphisms, hence by Remark \ref{r:wt-sub-loc-sec}, $\tau$ admits weighted local sections, and consequently $\tau^{-1}$ is a weighted morphism. The result then follows from the observation that the inversion map factors as the composition of weighted morphisms
\[ G\xrightarrow{\iota_1} G\times_{t,M,t}G\xrightarrow{\tau^{-1}}G\times_{s,M,t}G\xrightarrow{\pr_2}G \]
where $\iota_1(g)=(g,u(t(g)))$.
\end{proof}
\begin{remark}
The definition of multiplicative weighting was introduced in \cite{hudson2024multiplicative}, where the more general case of weightings along Lie subgroupoids is studied. There is another approach (discussed in \cite{LMweightings}) to multiplicative weightings of Lie groupoids in terms of the jet subbundle $Q\subset T_rG$ from Remark \ref{r:jets}, namely as weightings such that $Q$ is a Lie subgroupoid of $T_rG$. The two approaches are shown to be equivalent in \cite{hudson2024multiplicative}.
\end{remark}

\begin{theorem}
\label{t:def-sp-grpd}
Let $G\rightrightarrows M$ be a Lie groupoid equipped with a multiplicative weighting along $M$. Then, applying the weighted normal bundle functor $\nu_w$ (resp. weighted deformation space functor $\D_w$) yields Lie groupoids
\[ \nu_w(G,M)\rightrightarrows M, \qquad \D_w(G,M)\rightrightarrows \bR\times M.\]
The Lie groupoid $\nu_w(G,M)$ is a bundle of simply connected nilpotent Lie groups over $M$, with groupoid multiplication intertwining the $(\bR,\cdot)$-monoid action. The Lie groupoid $\D_w(G,M)$ is a smooth family of Lie groupoids $t\colon \D_w(G,M)\rightarrow \bR$, interpolating between $t^{-1}(1)=G$ and $t^{-1}(0)=\nu_w(G,M)$, with the groupoid multiplication intertwining the $\bR^\times$ zoom action.
\end{theorem}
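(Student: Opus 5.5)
The plan is to apply the functors $\nu_w$ and $\D_w$ to every structure map of $G$ and then verify the groupoid axioms by functoriality. The structure maps are source, target, unit, inversion and multiplication. The source, target and unit maps are weighted morphisms, as noted before Definition \ref{d:mult-weighting}. The multiplication is weighted by Definition \ref{d:mult-weighting}, and inversion by Proposition \ref{p:inversion-weighted}.

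Give $M$ the trivial weighting along $N=M$. Then $\nu_w(M,M)=M$, since $\I_1=0$ and $\gr=C^\infty_M$, and $\D_w(M,M)=\bR\times M$, since the Rees algebra is $C^\infty_M[t]$. So the functors produce maps $\nu_w(s),\nu_w(t)\colon \nu_w(G,M)\to M$ and $\D_w(s),\D_w(t)\colon \D_w(G,M)\to \bR\times M$, together with unit maps and inversions.

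The key input is Proposition \ref{p:fiber-prod-def-space}. The maps $s,t$ are weighted submersions, hence weighted transverse to each other, and they are submersions on the associated graded. The proposition therefore identifies
\[
\nu_w(G^{(2)},M)\cong \nu_w(G,M)\times_{M}\nu_w(G,M),\qquad
\D_w(G^{(2)},M)\cong \D_w(G,M)\times_{\bR\times M}\D_w(G,M).
\]
Applying $\nu_w$, resp. $\D_w$, to $m$ then gives a smooth multiplication on the space of composable pairs. The same argument on triple fiber products $G^{(3)}$ handles associativity. Each groupoid identity is a commutative diagram of weighted morphisms, for instance $m\circ(m\times \mathrm{id})=m\circ(\mathrm{id}\times m)$ on $G^{(3)}$, $m\circ(u\circ t,\mathrm{id})=\mathrm{id}$, and $m\circ(\mathrm{id},\mathrm{inv})=u\circ t$. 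Functoriality transports each of these diagrams to the image category.

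It remains to check that the new source and target maps are surjective submersions. Weighted submersions admit weighted local sections through every point (Remark \ref{r:wt-sub-loc-sec}), and $\nu_w$, $\D_w$ carry these to local sections, which gives the submersion property. For $\D_w$, equivariance of all the structure maps under the zoom action is built into the functor. On $t^{-1}(\tau)$ with $\tau\ne 0$ the structure is that of $G$, and on $t^{-1}(0)$ it is that of $\nu_w(G,M)$. Similarly, $\nu_w(m)$ intertwines the $(\bR,\cdot)$-action because $\nu_w$ lands in graded bundles.

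For the structure of $\nu_w(G,M)$, $\kappa_0=\iota\circ\pi$ forces the source and target to coincide. Indeed $s\circ\kappa_\lambda=s$, and letting $\lambda\to 0$ gives $s=\pi$; likewise $t=\pi$. So $\nu_w(G,M)$ is a bundle of Lie groups over $M$. Each fiber is diffeomorphic to $\bR^{\tn{codim}}$ via the graded coordinates of Theorem \ref{t:wt-norm}, hence simply connected. It carries a contracting action $\kappa_\lambda$ by group automorphisms with strictly positive weights. Differentiating gives a Lie algebra with a positive grading, compatible with the bracket, and such a Lie algebra is nilpotent.

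The main obstacle I expect is the fiber product identification for $\D_w$ and the smoothness of the multiplication near $t=0$. This requires the transversality hypotheses of Proposition \ref{p:fiber-prod-def-space} to hold for the pair $(s,t)$. I would check this in weighted coordinates adapted to $s$ as a coordinate projection, using Remark \ref{r:wt-sub-loc-sec}.
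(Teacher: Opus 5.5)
Your proposal follows the same route as the paper. It uses functoriality of $\nu_w$ and $\D_w$, the fiber product identification of Proposition \ref{p:fiber-prod-def-space} (whose hypothesis holds because $s,t$ are weighted submersions), and the weightedness of inversion from Proposition \ref{p:inversion-weighted}. It then shows that $\nu_w(s)=\nu_w(t)$ is the bundle projection, that the fibers are diffeomorphic to vector spaces, and that the positive grading forces nilpotency. You spell out more of the groupoid axioms than the paper does, which is fine.

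The one step that fails as written is the submersion property via local sections. Since $M$ carries the trivial weighting along itself, $\I^M_1=\I_M=0$ in $C^\infty_M$. A weighted local section $\sigma\colon U\to G$ of $s$ must therefore satisfy $\sigma^*\I_M=0$, so $\sigma$ takes values in $M$ and is just $u|_U$. Hence $\nu_w(\sigma)$ and $\D_w(\sigma)$ pass only through units, and they say nothing about submersivity of $\D_w(s)$ at other points of $t^{-1}(0)=\nu_w(G,M)$.

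The fix is easy. Use the coordinate-projection form from Remark \ref{r:wt-sub-loc-sec} instead. If $s$ is $(x,y)\mapsto x$ in weighted coordinates, then in the induced coordinates $(t,\hat{x}_{[0]},\hat{y}_{[w]})$ on $\D_w(G,M)$ the map $\D_w(s)$ is $(t,\hat{x},\hat{y})\mapsto(t,\hat{x})$. Alternatively, note directly that $\D_w(s)$ restricts to the submersion $\pi$ on $t^{-1}(0)$ and is compatible with the submersion $t$ to $\bR$. The ``main obstacle'' you flag is not an obstacle: the transversality required by Proposition \ref{p:fiber-prod-def-space} already follows from $s,t$ being weighted submersions.
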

\begin{proof}
Most of the claims follow from functoriality of the weighted normal bundle/weighted deformation space constructions, Proposition \ref{p:fiber-prod-def-space} and Proposition \ref{p:inversion-weighted}. For example
\[ \D_w(G,M)^{(2)}:=\D_w(G,M)\times_{\bR\times M}\D_w(G,M)\simeq \D_w(G^{(2)},M)\xrightarrow{\D_w(m)}\D_w(G,M) \]
is the Lie groupoid multiplication of $\D_w(G,M)$, and similarly for $\nu_w(G,M)$. The source and target maps of $\nu_w(G,M)$ are obtained by applying the functor $\nu_w$ to the source and target maps of $G$, and the resulting maps $\nu_w(G,M)\rightarrow \nu_w(M,M)\simeq M$ both agree with the bundle projection to $M$, hence $\nu_w(G,M)$ is a bundle of Lie groups, which are simply connected since the fibers of $\nu_w(G,M)$ are diffeomorphic to $\bR$-vector spaces. The $(\bR,\cdot)$-monoid action on the fibers intertwining the group multiplication differentiates to a $\bZ_{>0}$-grading on the Lie algebras of the fibers, which are therefore nilpotent (any finite dimensional $\bZ_{>0}$-graded Lie algebra is nilpotent for degree reasons).
\end{proof}

When $G$ is a Lie groupoid equipped with the standard weighting along $M$, $\nu_w(G,M)=\nu(G,M)=A$ is the Lie algebroid of $G$, viewed as a Lie groupoid over $M$ with source and target both given by the bundle projection $A\rightarrow M$, and the groupoid multiplication given by addition in the fibers of $A$; in other words, $A\rightarrow M$ is a bundle of abelian Lie groups. The Lie groupoid $\D_w(G,M)=\D(G,M)$ is the \emph{adiabatic Lie groupoid} associated to $G$. A special case is $G=\tn{Pair}(M)=M\times M$ the pair groupoid of $M$, for which $\nu_w(G,M)=A=TM$ and $\D_w(G,M)$ is the \emph{tangent groupoid} of $M$. 

We will see below that for a non-standard multiplicative weighting of $G$ along $M$, the fibers of $\nu_w(G,M)$ are non-abelian nilpotent Lie groups in general; this occurs already for non-standard weightings of the pair groupoid $\tn{Pair}(M)=M\times M$ along the diagonal $M$. For appropriate choices of non-standard weightings, the \emph{weighted pair groupoid} $\D_w(\tn{Pair}(M),M)$ (and more generally, the \emph{weighted adiabatic groupoid} $\D_w(G,M)$) recovers the Lie groupoids discussed for instance in \cite{van2019groupoid}, key to the construction of non-standard pseudo-differential calculi used in the study of hypoelliptic operators.

\section{Lie filtrations and dg weightings}
In this section we recall the notion of Lie filtrations $A_\bullet$ of a Lie algebroid $A\rightarrow M$. We define dg weightings of NQ-manifolds and prove that the data of a Lie filtration of $A_\bullet$ is equivalent to the data of a dg weighting of the NQ-manifold $A[1]$ (the NQ-manifold whose corresponding sheaf of dg algebras is $(\Omega_A,\d_A)$, the Chevalley-Eilenberg-de Rham complex of $A$).

\begin{definition}
Let $A$ be a Lie algebroid over a manifold $M$. A \emph{filtration} $A_\bullet$ of $A$ is a sequence of nested subbundles
\[ A_1\subset A_2\subset \cdots \subset A_r=A. \]
(By convention $A_0=0$ and $A_j=A$ for $j>r$.) A \emph{Lie filtration} of $A$ is a filtration $A_\bullet$ with the property that $[\A_i,\A_j]\subset \A_{i+j}$, where $\A_i$ is the sheaf of local sections of $A_i$.
\end{definition}
There are many interesting examples already for the case $A=TM$. For instance if $M$ is a contact manifold with hyperplane distribution $H$, setting $A_1=H$, $A_2=A=TM$ is an example of a Lie filtration. More generally, in sub-Riemannian geometry one studies `bracket-generating distributions' $A_1\subset A=TM$ with the property that, for each $k$, iterated Lie brackets of sections of $A_1$ of depth $\le k$ span a subbundle $A_k$, and $A_r=TM$ for some $r<\infty$; in this case $A_\bullet$ is a Lie filtration. The following result is well-known.
\begin{proposition}
\label{p:nilp-Lie}
Let $A$ be a Lie algebroid equipped with a Lie filtration $A_\bullet$. The Lie bracket of $A$ descends to a bracket on the fibers of the associated graded vector bundle $\gr(A)$, making the latter into a bundle of graded nilpotent Lie algebras.
\end{proposition}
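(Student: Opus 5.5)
The plan is to define the bracket on $\gr(A)=\bigoplus_{k=1}^r A_k/A_{k-1}$ fiberwise by lifting to sections. Given $\xi\in A_i|_m$ and $\eta\in A_j|_m$, choose local sections $\sigma\in\A_i$ and $\tau\in\A_j$ with $\sigma(m)=\xi$ and $\tau(m)=\eta$. Then set
\[ [\xi+A_{i-1}|_m,\ \eta+A_{j-1}|_m] := [\sigma,\tau](m)+A_{i+j-1}|_m. \]
The Lie filtration hypothesis $[\A_i,\A_j]\subset\A_{i+j}$ ensures that $[\sigma,\tau](m)\in A_{i+j}|_m$. Extending bilinearly over the graded components then gives a graded bracket on each fiber $\gr(A)_m$.

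The key step, and the only real obstacle, is showing this is well defined, i.e. independent of the chosen lifts and of the representatives modulo $A_{i-1}$ and $A_{j-1}$. For the lifts: by antisymmetry it suffices to change $\sigma$ to $\sigma+\sigma'$ with $\sigma'\in\A_i$ and either $\sigma'(m)=0$ or $\sigma'\in\A_{i-1}$.
\begin{itemize}
\item If $\sigma'\in\A_{i-1}$, then $[\sigma',\tau]\in\A_{i+j-1}$ by the Lie filtration property, so it vanishes in the quotient.
\item If $\sigma'(m)=0$, write locally $\sigma'=\sum_a f_a e_a$ with $e_a$ a local frame of $A_i$ adapted to the filtration and $f_a(m)=0$. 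The Leibniz rule gives
\[ [\sigma',\tau]=\sum_a f_a[e_a,\tau]-\sum_a(\rho(\tau)f_a)e_a. \]
At $m$ the first sum vanishes. The second sum lies in $A_i|_m\subset A_{i+j-1}|_m$, since $j\ge 1$.
\end{itemize}
So the bracket is independent of all choices. This use of the anchor term together with $j\ge1$ is the point where one must be careful: it is exactly why the bracket is only defined on the associated graded, and why the anchor drops out. The same Leibniz computation, applied with $\sigma=f\sigma_0$, shows the result is $C^\infty(M)$-bilinear, hence tensorial. It follows that it defines a smooth bundle map $\gr^i(A)\otimes\gr^j(A)\to\gr^{i+j}(A)$.

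Antisymmetry and the Jacobi identity then pass directly from the bracket on sections of $A$, since they can be checked on lifts. Finally, nilpotency holds for degree reasons. The grading is concentrated in degrees $1,\dots,r$, so any bracket of $r+1$ homogeneous elements has degree at least $r+1$ and hence vanishes. Therefore each fiber is a graded nilpotent Lie algebra of step at most $r$, smoothly varying in $m$, which is the claim of Proposition \ref{p:nilp-Lie}.
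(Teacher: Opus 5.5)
Your proposal is correct and takes the same approach as the paper. The paper defines the bracket by $[X+\A_{i-1},Y+\A_{j-1}]=[X,Y]+\A_{i+j-1}$, asserts that it is well defined and $C^\infty_M$-bilinear, and deduces nilpotency from the $\bZ_{>0}$-grading; your Leibniz-rule computation, using $j\ge 1$ to absorb the anchor term into $A_{i+j-1}$, fills in the well-definedness check that the paper leaves to the reader.
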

\begin{proof}
In detail the bracket is given by
\[ [X+\A_{i-1},Y+\A_{j-1}]=[X,Y]+\A_{i+j-1} \]
for $X\in \A_i,Y\in \A_j$. One verifies that the bracket is well-defined and $C^\infty_M$-bilinear, hence induces a Lie bracket on the fibers of $\gr(A)$. The resulting bundle of Lie algebras is $\bZ_{>0}$-graded and is thus nilpotent for degree reasons.
\end{proof}
For example in the case of a contact manifold mentioned above, the nilpotent Lie algebra in Proposition \ref{p:nilp-Lie} is the Heisenberg Lie algebra. In more elaborate examples, the isomorphism class of the nilpotent Lie algebra can vary, see for instance \cite[Example 2.5]{beschastnyi2026sub}.

In the remainder of this section we generalize weightings to NQ-manifolds and reformulate Lie filtrations of $A$ dually in terms of weightings of the NQ-manifold $A[1]$ associated to the Chevalley-Eilenberg-de Rham complex $(\Omega_A,\d_A)$ of $A$. The dual characterization is essentially a reformulation, in terms of weightings of NQ-manifolds, of a result contained in \cite[Theorem 5.15(c)]{hudson2024multiplicative} (formulated there in terms of `infinitesimally multiplicative' weightings of Lie algebroids). This discussion is not needed in the following two sections.

Let 
\[ V=\bigoplus_{j=0}^p V_j=V_0\oplus V_{>0} \] 
be a finite-dimensional $\bZ_{\ge 0}$-graded real vector space with a fixed homogeneous basis, and let $n_0=\dim(V_0)$, $n=\dim(V)$. For an open $U\subset V_0$ define
\[ C^\infty_V(U)=C^\infty(U)\otimes \Sym(V_{>0}^*),\]
where $\Sym(V_{>0}^*)$ denotes the \emph{graded symmetric algebra}, i.e. the ordinary symmetric algebra of the even degree subspace of $V_{>0}^*$ tensored with the exterior algebra of the odd degree subspace of $V_{>0}^*$; $C^\infty_V$ is a sheaf of graded-commutative rings on $V_0$. Let $x^1,...,x^n$ be the homogeneous coordinates on $V$ corresponding to the basis. Fix a \emph{weight sequence} $0\le w_1\le \cdots \le w_n=r$. The weight sequence determines a nested sequence of homogeneous ideals
\begin{equation} 
\label{e:wt-loc-form-graded}
I_k=(x^s=(x^1)^{s_1}\cdots (x^n)^{s_n}\mid w\cdot s\ge k)
\end{equation}
of $C^\infty_V(U)$.

Let $(\M,M,C^\infty_\M)$ be a $\bZ_{\ge 0}$-\emph{graded manifold}: an ordinary manifold $M$ together with a sheaf of $\bZ_{\ge 0}$-graded commutative rings $C^\infty_\M$ on $M$, such that around each point in $M$ there is an open neighborhood $U\subset M$ and an isomorphism $\phi$ from $C^\infty_\M(U)$ to $C^\infty_V(U)$ for a $\bZ_{\ge 0}$-graded vector space with basis $V$ as above. By analogy with ordinary manifolds, we refer to $\phi$ as a coordinate chart of $\M$, and we refer to the corresponding local sections $x^1,...,x^n\in C^\infty_\M(U)$ as local coordinates on $\M$.
\begin{example}
\label{e:grad-mfld}
Let $A\rightarrow M$ be a vector bundle. Let $A[1]$ be the $\bZ_{\ge 0}$-graded manifold with underlying ordinary manifold $M$ and sheaf of $\bZ_{\ge 0}$-graded commutative rings $C^\infty_{A[1]}=\Omega_A$, the sheaf of sections of the exterior algebra $\wedge A^*=\Sym(A[1]^*)$. Local coordinates for $A[1]$ are obtained by choosing local coordinates on an open $U\subset M$ (these become the coordinates of degree $0$) together with a local frame of $A^*|_U$ (these become the coordinates of degree $1$).
\end{example}
\begin{definition}
Let $(\M,M,C^\infty_\M)$ be a $\bZ_{\ge 0}$-graded manifold. A \emph{graded weighting} of $\M$ is a nested sequence of ideal sheaves
\[ C^\infty_\M=\I_0\supset \I_1\supset \cdots  \]
such that for every $p \in M$, there exists a coordinate chart $(U,\phi=(x^1,...,x^n))$ around $p$ and a weight sequence $w=(w_1,...,w_n)$, such that in $\phi$-coordinates, $\{\I_k(U)\}$ is of the form \eqref{e:wt-loc-form-graded}.
\end{definition}
The ideal $\I_1$ is the vanishing ideal of a $\bZ_{\ge 0}$-graded submanifold $\N$, and one speaks of a weighting of $\M$ along $\N$. We will focus on the case where $\N=M$ is the underlying manifold of the $\bZ_{\ge 0}$-graded manifold $\M$.
\begin{proposition}
Let $\M=A[1]$ for a vector bundle $A\rightarrow M$. The data of a weighting of $\M$ along $M$ is equivalent to a sequence of subbundles
\[ A_1\subset \cdots \subset A_r=A,\]
where $A_k$ is the annihilator of the elements of $\Omega^1_A$ of weight $>k$.
\end{proposition}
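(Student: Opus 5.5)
Here is the plan. Start from a weighting $\I_\bullet$ of $A[1]$ along $M$ and produce the subbundles. Choose a weighted chart $(x^1,\dots,x^p,\xi^1,\dots,\xi^m)$ on $U$, with $x^i$ of degree $0$ and $\xi^a$ of degree $1$. Since $\I_1$ is the vanishing ideal of $M$, the degree-$0$ coordinates must have weight $0$. Indeed, a positive-weight degree-$0$ coordinate would cut out a proper submanifold of $M$ inside $\I_1$. So $\I_1$ is generated by the $\xi^a$, and these have weights $w_a\ge 1$.

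Define $\Omega^1_A(>k)=\Omega^1_A\cap\I_{k+1}$, the degree-$1$ elements of weight $>k$. Every element of $\I_{k+1}$ is a sum of terms $f\,\xi^{s}$ with $s\cdot w\ge k+1$. The degree-$1$ part of such a sum is a $C^\infty(U)$-linear combination of single $\xi^a$ with $w_a\ge k+1$. Therefore $\Omega^1_A(>k)|_U=\operatorname{span}_{C^\infty(U)}\{\xi^a\mid w_a>k\}$. This is the sheaf of sections of a subbundle of $A^*$ spanned by part of a frame. Its annihilator $A_k$ is a subbundle of $A$, and the $A_k$ are nested, with $A_r=A$ and $A_0=0$.

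For the converse, start from a flag $A_1\subset\cdots\subset A_r=A$. Choose locally a frame $e_1,\dots,e_m$ of $A$ adapted to the flag, and give $e_a$ weight $w_a=\min\{k\mid e_a\in A_k\}$. Let $\xi^a$ be the dual coframe. Define
\[ \I_k=\sum_{i}\ \sum_{j_1+\cdots+j_i=k}\J_{j_1}\cdots\J_{j_i},\qquad \J_j=\Omega_A\cdot\Gamma(\operatorname{ann}A_{j-1}), \]
in the spirit of Remark \ref{r:nested-submanifolds}. This is coordinate-free, and in adapted coordinates it becomes exactly \eqref{e:wt-loc-form-graded}, because $\operatorname{ann}(A_{j-1})$ is spanned by the $\xi^a$ with $w_a\ge j$. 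So it is a graded weighting.

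It remains to show the two constructions are mutually inverse. Flag to weighting to flag is immediate from the local computation above. For weighting to flag to weighting, the key point is that $\I_\bullet$ is determined by its degree-$1$ part. In a weighted chart, the generators of $\I_k$ are the monomials $\xi^{s}$ with $s\cdot w\ge k$. Each such monomial is a product of degree-$1$ elements $\xi^{a}\in\Omega^1_A\cap\I_{w_a}$. Hence $\I_k$ equals the ideal generated by the products $\theta_1\cdots\theta_i$ with $\theta_l\in\Omega^1_A\cap\I_{j_l}$ and $\sum_l j_l\ge k$. This description uses only the sheaves $\Omega^1_A\cap\I_j$, which are recovered from the flag.

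The only slightly delicate step is the claim that degree-$0$ coordinates have weight $0$. Equivalently, $\I_1\cap C^\infty_M$ should be zero, or at least the submanifold $\N$ should equal $M$. I would handle this from the hypothesis that the weighting is along $M$: $\I_1$ is the vanishing ideal of $M$ in $A[1]$, namely the ideal generated by $\Omega^{\ge1}_A$. Everything else is local linear algebra over $C^\infty_M$ and needs no substantial new input.
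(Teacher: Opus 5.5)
Your proposal is correct and takes the same route as the paper. You read off the subbundles $A_k$ from the local normal form \eqref{e:wt-loc-form-graded}, and conversely you build $\I_k=\sum_i\sum_{j_1+\cdots+j_i=k}\J_{j_1}\cdots\J_{j_i}$ from the ideals $\J_j$ generated by $\operatorname{ann}(A_{j-1})$, as in Remark \ref{r:nested-submanifolds}, checking it against an adapted frame. Your extra observation that $\I_\bullet$ is generated by its degree-$1$ pieces $\Omega^1_A\cap\I_j$ usefully fills in the paper's bare assertion that the two constructions are mutually inverse.
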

\begin{proof}
Let $\M=A[1]$ be weighted. It follows from the local normal form of a weighting, equation \eqref{e:wt-loc-form-graded}, that $A_k$ is a smooth vector subbundle of $A$. Thus a weighting determines a sequence of nested subbundles $0_M=A_0\subset A_1\subset \cdots \subset A_r=A$ (compare also equation \eqref{e:normal-bun-filtration}). Conversely, given a sequence of subbundles as in the statement, let $\J_j=\I_{A_{j-1}}\subset \Omega_A$ be the ideal generated by the annihilator of $A_{j-1}$ in $\Omega_A^1$, and set $\I_k=\sum_i\sum_{j_1+\cdots+j_i=k}\J_{j_1}\cdots\J_{j_i}$ (compare Remark \ref{r:nested-submanifolds}). Choosing a local frame of $A$ (as in Example \ref{e:grad-mfld}) compatible with the filtration $A_\bullet$ yields local weighted coordinates, and thus a sequence of subbundles $A_1\subset \cdots \subset A_r=A$ determines a weighting. The constructions are inverse to each other.
\end{proof} 

Let $(\M,M,C^\infty_\M,\D)$ be an \emph{NQ-manifold}: a $\bZ_{\ge 0}$-graded manifold $(\M,M,C^\infty_\M)$ together with a \emph{cohomological vector field} $\D$, i.e. a derivation $\D$ of $C^\infty_\M$ of degree $1$ such that $\D^2=0$.
\begin{example}
Let $A\rightarrow M$ be a Lie algebroid. Let $\M=A[1]$, $C^\infty_\M=\Omega_A$. Let $\D=\d_A$ be the differential of the Chevalley-Eilenberg-de Rham complex. Then $(A[1],M,\Omega_A,\d_A)$ is an NQ-manifold.
\end{example}
\begin{definition}
Let $(\M,M,C^\infty_\M,\D)$ be an NQ-manifold. A \emph{dg weighting} of $(\M,M,C^\infty_\M,\D)$ is a graded weighting $\I_\bullet$ of $(\M,M,C^\infty_\M)$ such that each $\I_k$ is a differential ideal, that is, $\D\I_k\subset \I_k$.
\end{definition}
\begin{theorem}
Let $A\rightarrow M$ be a Lie algebroid. Let $\I_\bullet$ be a graded weighting of $(A[1],M,\Omega_A)$. Let $A_k\subset A$ be the annihilator of the elements of $\Omega^1_A$ of weight $>k$. Then $\I_\bullet$ is a dg weighting of $(A[1],M,\Omega_A,\d_A)$ if and only if $A_\bullet$ is a Lie filtration.
\end{theorem}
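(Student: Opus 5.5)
The plan is to reduce both conditions to a statement about $\d_A$ on $1$-forms, and then read that statement off the Cartan formula for $\d_A$.

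\textbf{Local description.} Work locally with a frame $e_1,\dots,e_m$ of $A$ adapted to the filtration, meaning that $\{e_i\mid w_i\le k\}$ spans $A_k$. Let $e^1,\dots,e^m$ be the dual coframe and give $e^i$ weight $w_i$. By the preceding proposition, $\I_\bullet$ is the weighting built from the $\J_j$. Here $\J_j$ is the ideal of $\Omega_A$ generated by the $1$-forms annihilating $A_{j-1}$, and
\[ \I_k=\sum_i\sum_{j_1+\cdots+j_i=k}\J_{j_1}\cdots\J_{j_i}. \]
So $\I_k$ is spanned over $C^\infty_M$ by the monomials $e^{i_1}\wedge\cdots\wedge e^{i_l}$ with $w_{i_1}+\cdots+w_{i_l}\ge k$. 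In low degrees this gives:
\begin{itemize}
\item $\I_k\cap\Omega^0_A=0$ for $k\ge1$;
\item $\I_k\cap\Omega^1_A=\Gamma(\tn{ann}(A_{k-1}))$;
\item for $2$-forms, $\beta\in\I_j$ if and only if $\beta(X,Y)=0$ for all $X\in A_a$, $Y\in A_b$ with $a+b\le j-1$.
\end{itemize}
To prove the last point, expand $\beta=\sum\beta_{pq}e^p\wedge e^q$ in the adapted frame. Evaluating on $(e_p,e_q)$ with $w_p+w_q\le j-1$ isolates exactly the coefficients that are not allowed, since a monomial $e^p\wedge e^q$ with $w_p+w_q\ge j$ kills $A_a\times A_b$ whenever $a+b<j$.

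\textbf{Reduction to generators.} I claim $\I_\bullet$ is a dg weighting if and only if $\d_A\alpha\in\I_j$ for every $j\ge1$ and every $\alpha\in\Gamma(\tn{ann}(A_{j-1}))$.
\begin{itemize}
\item Necessity is immediate, since such $\alpha$ lie in $\I_j$.
\item For sufficiency, first note that $\d_A(f\alpha)=\d_Af\wedge\alpha+f\,\d_A\alpha$ lies in $\I_j$, using $\I_0\I_j\subset\I_j$. Hence $\d_A\J_j\subset\I_j$.
\item Then for a product $a_1\cdots a_i$ with $a_\ell\in\J_{j_\ell}$ and $j_1+\cdots+j_i=k$, the graded Leibniz rule together with $\I_l\I_{l'}\subset\I_{l+l'}$ gives $\d_A(a_1\cdots a_i)\in\I_k$.
\item The case $k=0$ is trivial.
\end{itemize}

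\textbf{Main computation.} Take $\alpha\in\Gamma(\tn{ann}(A_{j-1}))$ and sections $X\in\A_a$, $Y\in\A_b$ with $a,b\ge1$ and $a+b\le j-1$. The Chevalley–Eilenberg formula gives
\[ \d_A\alpha(X,Y)=\rho(X)\alpha(Y)-\rho(Y)\alpha(X)-\alpha([X,Y]). \]
Since $a,b\le j-1$, both $\alpha(X)$ and $\alpha(Y)$ vanish identically, so the formula reduces to $\d_A\alpha(X,Y)=-\alpha([X,Y])$. The characterization of $\I_j\cap\Omega^2_A$ above is pointwise, and every vector is the value of a local section, so it can be tested on sections. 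Hence $\d_A\alpha\in\I_j$ for all such $\alpha$ if and only if $[\A_a,\A_b]\subset\A_{j-1}$ whenever $a+b\le j-1$. For this step I use that, pointwise, $\tn{ann}(A_{j-1})$ cuts out exactly $A_{j-1}$.

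\textbf{Conclusion.} Letting $j$ range over all positive integers, the last condition is exactly $[\A_a,\A_b]\subset\A_{a+b}$ for all $a,b\ge 1$; take $j=a+b+1$. The case $a=0$ is vacuous because $A_0=0$. This is the Lie filtration condition.

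The step I expect to need the most care is the description of $\I_j\cap\Omega^2_A$ by vanishing on $A_a\times A_b$ with $a+b<j$, including the check that it does not depend on the choice of adapted frame. The rest is formal.
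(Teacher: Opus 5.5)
Your proof is correct and follows the same route as the paper. Both arguments reduce to the identity $\d_A\alpha(X,Y)=\varrho(X)\alpha(Y)-\varrho(Y)\alpha(X)-\alpha([X,Y])=-\alpha([X,Y])$ for $\alpha$ annihilating $A_{k-1}$ and $X\in\A_m$, $Y\in\A_{m'}$ with $m+m'<k$; the paper runs this through structure functions in an adapted coframe, while you use a frame-free description of $\I_j\cap\Omega^2_A$ and also spell out the Leibniz-rule reduction from generators to all of $\I_k$, which the paper leaves implicit.
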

\begin{proof}
Let $\varrho\colon A\rightarrow TM$ denote the anchor map. For $k=1,2,...,r$ let $\{X_k^1,...,X_k^{n_k}\}$ be sections (over an open $U\subset M$) of $A_k$ that descend to a local frame of $A_k/A_{k-1}$, hence $\{X_k^i\mid 1\le k\le r,1\le i\le n_k\}$ is a local frame of $A$ adapted to the filtration $A_\bullet$. Let $\{\alpha^k_i\}$ be the dual local frame of $A^*$. Then $\alpha^k_i$ annihilates $A_{k-1}$ hence $\alpha^k_i\in \I_k$. There are functions $f_{ll'}^{ii'}=-f_{l'l}^{i'i}\in C^\infty_M$ such that
\begin{equation} 
\label{e:dAalphaki}
\d_A\alpha^k_i=-\frac{1}{2}\sum_{l,l'}\sum_{j,j'} f_{ll'}^{jj'}\alpha^l_j\wedge \alpha^{l'}_{j'}.
\end{equation}
Evaluate both sides on a pair $(X_m^j,X_{m'}^{j'})$ for which $m+m'<k$. On the left hand side use the usual formula $\d_A\alpha(X,X')=\varrho(X)\alpha(X')-\varrho(X')\alpha(X)-\alpha([X,X'])$ and the fact that $\alpha^k_i$ annihilates $A_{k-1}$, while on the right hand side use duality $\alpha^l_j(X_{l'}^{j'})=\delta^l_{l'}\delta_j^{j'}$, to obtain
\begin{equation} 
\label{e:dAalphaki2}
\alpha^k_i([X_m^j,X_{m'}^{j'}])=f_{mm'}^{jj'}. 
\end{equation}
Suppose $\I_\bullet$ is a dg weighting. Then $\d_A\alpha^k_i \in \I_k$ and thus the terms in \eqref{e:dAalphaki} with $l+l'<k$ vanish, hence $f_{ll'}^{jj'}=0$ whenever $l+l'<k$. By equation \eqref{e:dAalphaki2}, $\alpha^k_i([X_m^j,X_{m'}^{j'}])=0$. As this holds for all $\alpha^k_i$ with $k>m+m'$, we deduce that $[X_m^j,X_{m'}^{j'}]$ is a section of $A_{m+m'}$, hence $A_\bullet$ is a Lie filtration. Conversely if $A_\bullet$ is a Lie filtration then $[X_m^j,X_{m'}^{j'}]$ is a section of $A_{m+m'}$ and the left hand side of \eqref{e:dAalphaki2} vanishes when $m+m'<k$, hence the terms of \eqref{e:dAalphaki} with $l+l'<k$ vanish, and it follows that $\I_\bullet$ is a dg weighting.
\end{proof}

\section{The multiplicative weighting associated to a Lie filtration}\label{s:mult-weighting-from-Lie}
Throughout this section we fix a Lie groupoid $(t,s)\colon G\rightrightarrows M$ with Lie algebroid $A$. We show that a Lie filtration $A_\bullet$ of $A$ determines a multiplicative weighting $\I_\bullet$ of $G$ along $M$. By working with local Lie groupoids one has a similar result more generally when $A$ is not necessarily integrable.

We introduce some additional notation. Let $\A$ denote the sheaf of smooth sections of $A$, and if $A_1\subset \cdots \subset A_r=A$ is a filtration of $A$, let $\A_k$ denote the sheaf of smooth sections of $A_k$. If $X\in \A_k(U)$ let $X^L$ (resp. $X^R$) denote the corresponding left (resp. right) invariant vector field on $s^{-1}(U)\subset G$ (resp. $t^{-1}(U)\subset G$).

\begin{definition}
Let $A_\bullet$ be a filtration of $A$. For $k\le 0$ set $\I_k=C^\infty_G$. Inductively define $\I_1=\I_M\subset C^\infty_G$, and for $k>1$,
\[ \I_k=\{f\in \I_M\mid \forall i>0,\forall X\in \A_i, X^L f \in \I_{k-i}\}\subset C^\infty_G.\]
It is not difficult to verify by induction that $\I_\bullet$ is a nested sequence of ideals that we will refer to as the \emph{sequence of ideals $\I_\bullet\subset C^\infty_G$ associated to the filtration $A_\bullet$}.
\end{definition}
\begin{example}
For example $\I_2$ consists of those $f\in \I_M$ such that
\[ X \in \A_1 \Rightarrow X^L f|_M=0 \]
and $\I_3$ consists of those $f \in \I_M$ such that
\[ X_1,X_2\in \A_1,Y\in \A_2\Rightarrow X_1^LX_2^Lf|_M=0=Y^Lf|_M,\]
and so on.
\end{example}
\begin{remark}\label{r:left-right}
The same sequence of ideals $\I_\bullet$ results if right invariant vector fields are used instead of left invariant vector fields. For example the identity
\[ X^R=(X^R-X^L)+X^L \]
and the fact that $X^R-X^L$ is tangent to $M$ together imply that if $f, X^Lf \in \I_M$ then $X^Rf \in \I_M$ too. Similarly the identity
\[ X^RY^R=(X^R-X^L)(Y^R-Y^L)+(X^R-X^L)Y^L+(Y^R-Y^L)X^L+Y^LX^L \]
implies that if $f,X^Lf,Y^Lf,X^LY^Lf\in \I_M$ then $X^RY^Rf\in \I_M$ too. The general argument is similar.
\end{remark}

Let $\U\A$ denote the sheaf of associative algebras over $M$ such that for an open $U\subset M$, $\U\A(U)$ is the universal enveloping algebra of the Lie-Rinehart algebra $\A(U)$. We will refer to $\U\A$ as the universal enveloping algebra of $\A$ for short. For example if $A=TM$ then $\U\A$ is the sheaf of scalar differential operators. If $P\in \U\A(U)$ then the corresponding left (resp. right) invariant differential operator on $s^{-1}(U)$ (resp. $t^{-1}(U)$) is denoted $P^L$ (resp. $P^R$).

\begin{definition}
\label{d:A-order}
Let $A_\bullet$ be a filtration of $A$. If $X \in \A_k$ then we say that $X$ (or $X^L$ or $X^R$) has $A_\bullet$-\emph{order at most} $k$. For a monomial $P=X_1\cdots X_l$ in the universal enveloping algebra, we say that $P$ (or $P^L$ or $P^R$) has $A_\bullet$-\emph{order at most} $k$ if $X_1\in \A_{k_1},...,X_l\in \A_{k_l}$ and $k_1+\cdots+k_l\le k$. The definition is extended to arbitrary elements of $\U\A$ by declaring that the $A_\bullet$-order of a sum of monomials is at most the maximum of upper bounds for the $A_\bullet$-orders of the summands.
\end{definition}
\begin{remark}
Note that an element of $\U\A$ (even a monomial) has many different expressions as sums of monomials in sections of $\A$, and these may lead to different upper bounds for the $A_\bullet$-order. The `$A_\bullet$-order' itself is the minimum over all possible expressions. We will only ever need upper bounds for the `$A_\bullet$-order', and so Definition \ref{d:A-order} with its qualifier `at most' will suffice.
\end{remark}

\begin{proposition}
\label{p:operator-condition}
Let $\I_\bullet \subset C^\infty_G$ be the sequence of ideals associated to a filtration $A_\bullet$ of $A$. Then $f \in \I_k$ if and only if $P^Lf\in \I_M$ for every $P\in \U\A$ with $A_\bullet$-order at most $k-1$.
\end{proposition}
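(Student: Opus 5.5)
We argue by strong induction on $k$, unwinding the recursive definition of $\I_k$. For $k\le 1$ the statement is immediate. The only elements of $A_\bullet$-order at most $k-1\le 0$ are constants times $1$ (empty monomials): monomials with at least one factor have order at least $1$, since $\A_0=0$. So the condition reduces to $f\in\I_M$, which matches $\I_1=\I_M$. For $k\le 0$ there is nothing to check, under the convention that $\I_k=C^\infty_G$ and the condition is vacuous.

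For the inductive step let $k>1$ and assume the claim for all $k'<k$.

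Suppose first that $f\in\I_k$, and let $P$ have $A_\bullet$-order at most $k-1$. By linearity it suffices to take $P=X_1\cdots X_l$ a monomial with $X_j\in\A_{k_j}$ and $\sum k_j\le k-1$. If $l=0$ then $P^Lf=cf\in\I_M$. Otherwise $P^L f=(X_1\cdots X_{l-1})^L(X_l^Lf)$, since $P\mapsto P^L$ is an algebra homomorphism from $\U\A$ to left-invariant differential operators. By definition of $\I_k$ we have $X_l^Lf\in\I_{k-k_l}$. The monomial $X_1\cdots X_{l-1}$ has $A_\bullet$-order at most $k-1-k_l=(k-k_l)-1$, so the inductive hypothesis at level $k-k_l<k$ gives $(X_1\cdots X_{l-1})^L X_l^L f\in\I_M$. (If $k-k_l\le 0$ the $A_\bullet$-order bound becomes negative, which is impossible for a nonempty monomial; so this case only occurs with $l=1$, and then $X_l^Lf$ need only lie in $C^\infty_G$. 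In fact the order bound forces $k_l\le k-1$, so $k-k_l\ge 1$ and this case never arises.)

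Conversely, suppose $P^Lf\in\I_M$ for all $P$ of $A_\bullet$-order at most $k-1$. Taking $P=1$ gives $f\in\I_M$. Fix $i>0$ and $X\in\A_i$; we must show $X^Lf\in\I_{k-i}$. If $k-i\le 0$ this is automatic. Otherwise, apply the inductive hypothesis at level $k-i$: for any $Q$ of order at most $k-i-1$, the product $QX$ has order at most $k-1$, so $Q^L(X^Lf)=(QX)^Lf\in\I_M$. Hence $X^Lf\in\I_{k-i}$, and therefore $f\in\I_k$.

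The only points needing care are routine. First, the order-bound bookkeeping is compatible with right multiplication: if $Q$ has order at most $a$ via some expression, then $QX$ has order at most $a+i$ via the same expression multiplied on the right by $X$. Second, $(QX)^L=Q^LX^L$, which holds because left-invariant extension is a homomorphism of sheaves of algebras, as $\U\A$ acts on $C^\infty$ of $s$-fibers via left-invariant operators. Third, everything is local over $M$: sections over $U$ act on $s^{-1}(U)$, and the ideals are sheaves, so we can argue on sufficiently small opens. I expect no genuine obstacle. The mild subtlety is keeping straight that the order is only an upper bound defined via expressions, which the definition's ``at most'' phrasing already handles.
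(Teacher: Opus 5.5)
Your proof is correct and follows the paper's approach. The paper's entire proof is that the claim ``follows inductively from the definitions of $\I_k$ and of $A_\bullet$-order,'' and your strong induction on $k$ writes that induction out: peel off the last factor $X_l$ for the forward direction, and test against $P=1$ and $P=QX$ for the converse.
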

\begin{proof}
This follows inductively from the definitions of $\I_k$ and of $A_\bullet$-order.
\end{proof}
\begin{remark}\label{r:left-right2}
As in Remark \ref{r:left-right} there is an analogous statement involving the right invariant family of differential operators $P^R$ instead of $P^L$.
\end{remark}

\begin{theorem}
\label{t:wt-assoc-Lie-filt}
Let $A_\bullet$ be a Lie filtration. The associated sequence of ideals $\I_\bullet \subset C^\infty_G$ is a weighting of $G$ along $M$. Moreover, under the identification $A=\nu(G,M)$, the induced filtration $\nu(G,M)_{(\bullet)}$ coincides with the Lie filtration $A_\bullet$.
\end{theorem}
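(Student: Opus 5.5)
The statement is local, so I will verify it near a point $m\in M$. Away from $M$ every $\I_k$ equals $C^\infty_G$ there (since $\I_1=\I_M$), and any chart is weighted with all weights $0$. The plan is to build weighted coordinates near $m$ from an adapted frame of $A$ and the exponential-type map it generates.

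\textbf{Step 1 (coordinates).} Choose a local frame $\{X_k^i\mid 1\le k\le r, 1\le i\le n_k\}$ of $A$ over $U\ni m$, adapted to $A_\bullet$ as in the proof of the theorem on dg weightings. Let $y^1,\dots,y^p$ be coordinates on $U$, and define
\[ \Phi(y,c)=\Phi_{X_1^1}^{c_1^1}\circ\cdots\circ\Phi_{X_r^{n_r}}^{c_r^{n_r}}(u(y)), \]
the composition of the flows of the left invariant vector fields $(X_k^i)^L$ applied to the unit $u(y)$. This is a diffeomorphism from a neighborhood of $U\times\{0\}$ onto a neighborhood of $u(U)$ in $G$, because its differential at $c=0$ is the identity on $TM\oplus A=T_MG$. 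Set $x^i_k=c^i_k\circ\Phi^{-1}$ with weight $k$, and give the $y^j$ (pulled back by $t$ or $s$) weight $0$. Let $\I'_\bullet$ be the ideals of the form \eqref{e:wt-loc-form} in these coordinates. The claim becomes $\I_k=\I'_k$ near $M$.

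\textbf{Step 2 ($\I'_k\subset\I_k$).} I will show $x^i_k\in\I_k$. The ideal property of the $\I_k$ and $\I_a\I_b\subset\I_{a+b}$ then give the inclusion. That product rule is proved from Proposition \ref{p:operator-condition} by expanding $P^L(fg)$ with the Leibniz rule: each term splits the $A_\bullet$-order of $P$ between $f$ and $g$. To show $x^i_k\in\I_k$, I use Proposition \ref{p:operator-condition}. For a monomial $P=Y_1\cdots Y_l$ with $Y_j\in\A_{k_j}$ and $\sum k_j\le k-1$, I must show $P^Lx^i_k|_M=0$. Expanding each $Y_j$ in the adapted frame, it suffices to treat $P$ a product of frame elements $X_{k_1}^{i_1}\cdots X_{k_l}^{i_l}$ times functions, and $C^\infty_M$ coefficients can be commuted through at the cost of lower-order terms. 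Pulled back by $\Phi$ and restricted to $M$, such a derivative of $c^i_k$ is computed from the Taylor expansion of the composed flows in $c$ via the BCH-type identity. The coefficient of the linear term $c^i_k$ only appears from frame elements of weight $k$, while higher brackets of $X$'s of weights $k_1,\dots,k_l$ lie in $\A_{\sum k_j}$ by the Lie filtration property. Hence no combination of total weight $<k$ can produce the $X_k^i$-component.

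This is the main obstacle. My first attempt would instead argue by induction on $k$ using the recursive definition of $\I_k$. Write $X^L=\sum_j a_j\,\partial/\partial c_j$ in the $\Phi$ coordinates, and prove that the coefficient $a$ in the direction $\partial/\partial c_{k'}$ of $(X^i_{k''})^L$ lies in $\I'_{k'-k''}$. This uses the Lie filtration, through the formula for left invariant vector fields in flow coordinates as a sum of iterated brackets. It gives $X^L\I'_k\subset\I'_{k-i}$ for $X\in\A_i$, which, combined with the recursive definition, yields $\I'_k\subset\I_k$ by induction.

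\textbf{Step 3 ($\I_k\subset\I'_k$).} Proceed by induction on $k$. Take $f\in\I_k$ and expand it in the $c$-variables as a sum of weighted homogeneous polynomial terms of weight $<k$ plus an element of $\I'_k$. The lowest weight term $\sum_s a_s(y)c^s$ with $s\cdot w=j<k$ is detected by applying the corresponding monomial $P^L=\prod (X^{i}_{k})^L$. Modulo $\I_M$ and terms of lower or equal-but-different multi-index, this gives $s!\,a_s(y)$ by Step 2's estimate. By Proposition \ref{p:operator-condition} this vanishes, so $a_s=0$, and the induction proceeds.

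\textbf{Step 4 (induced filtration).} With weighted coordinates in hand, $T_MG_{(k)}$ is spanned by $TM$ and $\partial/\partial c^i_{k'}$ with $k'\le k$. At $M$ these are $X^i_{k'}$ under $\nu(G,M)=A$. Hence $\nu(G,M)_{(k)}=A_k$.
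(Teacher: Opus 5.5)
Your route differs from the paper's, and the main step is missing. You build canonical flow coordinates of the second kind and aim to prove they are weighted. The paper never uses flows. It starts from arbitrary coordinates with $X_i\bar x^j|_e=\delta_i^j$ and corrects them recursively by polynomials in the lower-weight coordinates. Its only inputs are the PBW-type reduction to standard monomials (the one place the Lie filtration is used) and a linear system that is triangular in the length of monomials. Your Steps 1, 3 and 4 are essentially sound. Step 3 even makes explicit the reverse inclusion $\I_k\subset\I'_k$, which the paper's proof does not spell out.

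\textbf{The gap is Step 2.} The claim $x^i_k\in\I_k$ (in your second sketch, $X^L\I'_k\subset\I'_{k-i}$ for $X\in\A_i$) is the only place your argument uses that $A_\bullet$ is a Lie filtration. It is therefore the whole content of the theorem, and you only assert it. The BCH sketch does not go through as written, for three reasons:
\begin{enumerate}[(i)]
\item The $t$-fibers of $G$ are not groups.
\item The frame satisfies $[X_a^L,X_b^L]=\sum_d (s^*f^d_{ab})X_d^L$, with structure functions that in general vary along the fiber.
\item $u(m)\exp(Z)$ depends on the section $Z$ near $m$, not only on $Z(m)$.
\end{enumerate}
So there is no finite-dimensional Lie algebra in which to compute, and ``no combination of total weight $<k$ can produce the $X^i_k$-component'' remains a heuristic. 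The paper can restrict to a Lie group because its algebraic scheme parametrizes over $M$ verbatim; a BCH computation does not. Even for a Lie group, second-kind coordinates are a nonlinear function of the BCH data. There the clean argument uses first-kind coordinates $x=\log$: for $P=Y_1\cdots Y_l$ with $Y_j\in A_{k_j}$, $P^Lx^i(e)$ is the $i$-th component of the part of $\mathrm{BCH}(t_1Y_1,\dots,t_lY_l)$ multilinear in $t_1,\dots,t_l$, which lies in $A_{k_1+\cdots+k_l}$.

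\textbf{How to close the gap.} Your second sketch is the right way, but two things must actually be done.
\begin{enumerate}[1.]
\item Write $\partial_{c_j}=(\Phi^{c_1}_{X_1^L}\circ\cdots\circ\Phi^{c_{j-1}}_{X_{j-1}^L})_*X_j^L$. Apply Taylor's formula with remainder, $(\Phi^{c}_{X^L})_*V=\sum_{q<N}\frac{(-c)^q}{q!}\mathrm{ad}_{X^L}^qV+c^NR_N(c)$, to left-invariant $V$. Together with $\mathrm{ad}_X^q\A_j\subset\A_{qi+j}$ for $X\in\A_i$, this gives $\partial_{c_j}=\sum_a b^a_jX_a^L$ with $b^a_j\in\I'_{w_a-w_j}$ and $b^a_j=\delta^a_j$ at $c=0$.
\item This expresses $\partial_{c_j}$ through the $X_a^L$, not conversely, so you must invert while tracking weights. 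With $D_\lambda=\mathrm{diag}(\lambda^{w_a})$, the matrix $D_\lambda^{-1}b(\lambda\cdot x)D_\lambda$ extends smoothly to $\lambda=0$ with block-unipotent limit, hence so does its inverse. Lemma \ref{l:Ik-dyn} then gives $X_a^L=\sum_j\tilde b^j_a\partial_{c_j}$ with $\tilde b^j_a\in\I'_{w_j-w_a}$. Only then do $X^L\I'_k\subset\I'_{k-i}$ and your induction follow.
\end{enumerate}

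\textbf{Two smaller points.}
\begin{itemize}
\item Since $X^L$ generates right multiplication by bisections, it is tangent to the $t$-fibers. So the weight-zero coordinates must be $t^*y^j$, not ``$t$ or $s$''. This is what makes $X^L$ have no $\partial_y$-component and commute with the coefficients $a_u(y)$ in Step 3.
\item In Step 3 the matrix $(P_s^Lc^u|_M)$ over multi-indices of equal weight is not diagonal. It vanishes for $|u|>|s|$ and equals $s!\,\delta_{su}$ for $|u|=|s|$, but can be nonzero for $|u|<|s|$. The $a_u$ must therefore be killed by increasing length, exactly as in the paper's triangular system.
\end{itemize}

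Once completed, your argument shows that the second-kind coordinates of any adapted frame are weighted. That is more explicit than the paper's recursive correction, at the price of the flow calculus that the paper's algebraic argument avoids.
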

\begin{proof}
The theorem is a special case of \cite[Theorem 4.1]{LMsingularlie}, but for the sake of completeness we include the proof. To avoid distraction we focus on the case of a single source fiber so $M=\{e\}$, $\dim(G)=n$, $G$ is a Lie group, and $A=A_e$ is its Lie algebra; the general case is a parametrized version and is no more difficult. For $X \in A_e=T_eG$ we will use the same symbol $X$ for the left-invariant vector field $X^L$, omitting the superscript `$L$', and similarly for elements of the universal enveloping algebra. We seek coordinates $x^1,...,x^n$ that vanish at $e\in G$ and satisfy the following \emph{vanishing conditions}:
\begin{equation} 
\label{e:vanishing-conditions}
A_\bullet-\tn{order}(P)<w_i \quad \Rightarrow \quad Px^i|_e=0.
\end{equation}
Let $n_i=\tn{rank}(A_i)$. Choose a basis $X_1,...,X_n$ of $A$ such that $X_1,...,X_{n_1}\in A_1$, $X_1,...,X_{n_2}\in A_2$, and so on. If $s=(s_1,...,s_n)$ is a multi-index, then the operator $X^s=X_1^{s_1}\cdots X_n^{s_n}$ has $A_\bullet$-order at most $w\cdot s=w_1s_1+\cdots+w_ns_n$. Let us call such monomials, with $X_1,...,X_n$ appearing in lexicographic order, `standard monomials'. We will refer to the number of factors $s_1+\cdots+s_n$ appearing in a monomial $X^s$ as the \emph{length} of the monomial.

Any differential operator of $A_\bullet$-order at most $k$ can be written as a linear combination, with $C^\infty$-coefficients, of the standard monomials $X^s$ such that $w\cdot s\le k$. Indeed, any differential operator can certainly be written as a linear combination with $C^\infty$ coefficients of monomials in the $X_i$ that are not necessarily standard. The point is that if one starts with a monomial having $A_\bullet$-order $k$ that is \emph{not} in standard order, it may be replaced with the standard monomial built from the same factors, at the cost of error terms involving commutators. The Lie filtration condition tells us that these commutator error terms also have $A_\bullet$-order at most $k$. On the other hand, the error terms are monomials with strictly smaller length (since $XY-YX$ is a vector field), hence inductively they can be written as a linear combination of standard monomials of $A_\bullet$-order at most $k$. 

As a result it suffices to find coordinates satisfying the vanishing conditions \eqref{e:vanishing-conditions} for standard monomials:
\begin{equation}  
\label{e:vanishing-condition2}
w\cdot s<w_i \quad \Rightarrow \quad X^s x^i|_e=0.
\end{equation}
Choose an initial system of coordinates $\bar{x}^i$ satisfying
\[ X_i\bar{x}^j|_e=\delta_i^j.\]
These satisfy the vanishing condition \eqref{e:vanishing-condition2} for standard monomials of length $1$. In particular $\bar{x}^1,...,\bar{x}^{n_1} \in \I_1$ and $\bar{x}^{n_1+1},...,\bar{x}^{n_2}\in \I_2$ (since $X_i\bar{x}^j|_e=0$ for $1\le i\le n_1$ and $n_1+1\le j\le n_2$). Assume inductively (in $k$) that we have already found coordinates with the correct weights up to weight $k$ for some $k\ge 2$, that is, $\bar{x}^i \in \I_{w_i}$ for $i=1,...,n_k$ for some $k\ge 2$. We will modify the coordinates $\bar{x}^i$ with $n_k+1\le i\le n_{k+1}$ to ensure that they have the required weight $k+1$. For $i\le n_k$ let $x^i=\bar{x}^i$, unchanged. We seek new coordinates $x^i$, for $n_k+1\le i\le n_{k+1}$, that belong to $\I_{k+1}$. We attempt to find a change of coordinates of the form
\begin{equation} 
\label{e:xi-def}
x^i=\bar{x}^i+\sum_{|u|\ge 2,w\cdot u< w_i} c^i_u(x)^u, 
\end{equation}
where $u=(u_1,...,u_{n_k})$ is a multi-index, $|u|=u_1+\cdots+u_{n_k}$, and $(x)^u=(x^1)^{u_1}\cdots (x^{n_k})^{u_{n_k}}$; that is, we modify $\bar{x}^i$ by a sum of monomials in the variables $x^1,...,x^{n_k}$. The $c^i_u$ are constants to be determined. Since the added terms have quadratic and higher degree, this is a valid coordinate change sufficiently near $e\in G$. To ensure $x^i$ belongs to $\I_{k+1}$ we require $x^i$ satisfy the vanishing conditions
\[ w\cdot s<k+1 \quad \Rightarrow \quad X^sx^i|_e=0.\]
We claim these constraints uniquely determine the constants $c_{iu}$. To show this we argue inductively in the length $l$ of the standard monomial: suppose that the $c_{iu}$ with $|u|=u_1+\cdots+u_{n_k}<l$ have already been determined in such a way that the vanishing conditions involving standard monomials of length $<l$ are satisfied. For the inductive step, we must choose the constants $c^i_s$ with $|s|=l$ such that the vanishing conditions involving standard monomials of length equal to $l$ are also satisfied. This is easy: let $|s|=l$ (with $w\cdot s<k+1$), apply the standard monomial $X^s$ to both sides of the equation \eqref{e:xi-def} defining $x^i$, and evaluate at $e$:
\[ X^sx^i|_e=X^s\bar{x}^i|_e+\sum_{|s|>|u|\ge 2,w\cdot u< w_i} c^i_uX^s(x)^u|_e+c^i_sX^s(x)^s|_e \]
where observe that we dropped terms with $|u|>|s|$ since then $X^s(x)^u|_e=0$. Crucially we also dropped the terms with $|u|=|s|$ but $u\ne s$ as then $X^s(x)^u|_e=0$ too: when the length of the monomial $X^s$ matches the length of the monomial $(x)^u$, the only non-zero contributions (from the Leibniz rule) to $X^s(x)^u|_e$ come from expressions having precisely one vector field from the monomial $X^s$ applied to each factor in the monomial $(x)^u$, and if $s\ne u$ then at least one of these vanishes at $e$, thanks to the condition $X_i\bar{x}^j|_e=\delta_i^j$. Rearrange the resulting equation to solve for $c^i_s$:
\[ c^i_s=-a_s^{-1}\Big(X^s\bar{x}^i|_e+\sum_{|s|>|u|\ge 2,w\cdot u<w_i}c^i_uX^s(x)^u|_e\Big), \quad a_s=X^s(x)^s|_e.\] 
This completes the inductive step proving the claim, and also completes the proof that $\I_\bullet$ is a weighting. The claim regarding the filtration of the normal bundle $\nu(G,M)$ follows immediately from the definition of $\nu(G,M)_{(\bullet)}$ and $\I_\bullet$.
\end{proof}
\begin{definition}
Let $A_\bullet$ be a Lie filtration and $\I_\bullet\subset C^\infty_G$ the associated sequence of ideals. In light of Theorem \ref{t:wt-assoc-Lie-filt}, we refer to $\I_\bullet$ as the \emph{weighting (of $G$ along $M$) associated to the Lie filtration} $A_\bullet$.
\end{definition}
\begin{theorem}
\label{t:Lie-filt-wt-mult}
The weighting of $G$ along $M$ associated to a Lie filtration $A_\bullet$ is multiplicative.
\end{theorem}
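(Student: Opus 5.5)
We need to show $m^*\I_k\subset \I^{(2)}_k$, where $\I^{(2)}_\bullet$ is the weighting on $G^{(2)}=G\times_M G$ obtained by restricting the product weighting on $G\times G$ (Theorem \ref{t:fiber-product}). The plan is to describe $\I^{(2)}_k$ by a differential-operator condition analogous to Proposition \ref{p:operator-condition}, and then use that multiplication intertwines invariant operators.

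\textbf{Step 1: differential characterization of $\I^{(2)}_k$.} Points of $G^{(2)}$ are pairs $(g_1,g_2)$ with $s(g_1)=t(g_2)$, and the units sit inside as $M\ni x\mapsto(x,x)$. Two kinds of vector fields on $G^{(2)}$ are tangent to it:
\[ (Y^R,0) \text{ (right-invariant in the first factor)}, \qquad (0,X^L) \text{ (left-invariant in the second factor)}. \]
Both preserve the fiber-product constraint, since $X^L$ is tangent to $s$-fibers and $Y^R$ to $t$-fibers. Together with vector fields tangent to $M$ they span $T_MG^{(2)}$. I would show:
\begin{quote}
$F\in \I^{(2)}_k$ if and only if $(Q^R\otimes P^L)F|_M=0$ for all $P,Q\in \U\A$ with $A_\bullet$-order of $P$ plus $A_\bullet$-order of $Q$ at most $k-1$.
\end{quote}
To prove this, first note that on each factor, by Proposition \ref{p:operator-condition} and Remark \ref{r:left-right2}, the ideals $\I_k$ are given by weighted coordinates $x^i$ adapted to the standard monomials, as in the proof of Theorem \ref{t:wt-assoc-Lie-filt}. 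Weighted submanifold coordinates on $G^{(2)}$ near $M$ can then be taken as $(\tn{base coordinates}, x^i(g_1), x^j(g_2))$, with weights inherited from the factors. The characterization follows from the same triangularity argument as in Theorem \ref{t:wt-assoc-Lie-filt}, run with the standard monomials $Y^{s'}\otimes X^{s}$ in place of $X^s$.

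\textbf{Step 2: multiplication intertwines operators.} By invariance, for $f\in C^\infty_G$,
\[ (0,X^L)\,m^*f=m^*(X^Lf), \qquad (Y^R,0)\,m^*f=m^*(Y^Rf). \]
Hence
\[ (Q^R\otimes P^L)\,m^*f = m^*(Q^RP^Lf). \]
Since left- and right-invariant operators commute, $Q^RP^L=P^LQ^R$. Restricting to units gives
\[ (Q^R\otimes P^L)\,m^*f\big|_M=(Q^RP^Lf)\big|_M. \]

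\textbf{Step 3: conclude.} Let $f\in\I_k$ and let $P,Q$ have total $A_\bullet$-order at most $k-1$. We need $(Q^RP^Lf)|_M=0$. Along $M$, rewrite $Q^R$ as a left-invariant operator. The identity $X^R=(X^R-X^L)+X^L$, where $X^R-X^L$ is tangent to $M$, is iterated exactly as in Remark \ref{r:left-right}. The pieces tangent to $M$ are then commuted to the left, where they preserve $\I_M$. This reduces the claim to showing $R^Lf\in\I_M$ for left-invariant $R$ of order at most $k-1$. The commutator terms produced along the way must keep $A_\bullet$-order at most $k-1$, and this is exactly where the Lie filtration hypothesis is used. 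The conclusion then follows from Proposition \ref{p:operator-condition}.

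\textbf{Main obstacle.} I expect Step 1 to be the hardest part, namely matching the abstractly defined fiber-product weighting with the operator description. If a direct approach stalls, I would fall back on the following. Identify $G^{(2)}\cong G\times_{s,M,t}G$ locally by splitting $s$-fibers via a local bisection. Then verify that the generators $x^i(g_1)$ and $x^j(g_2)$ satisfy the operator vanishing conditions. Finally apply Proposition \ref{p:criterion-for-wt-chart} together with a rank count, as in its proof.
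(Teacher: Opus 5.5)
Your proposal is correct and is essentially the paper's argument. The paper also describes $\I^{(2)}_\bullet$ through the mixed fields $(X^R\times 0)$, $(0\times X^L)$ tangent to $G^{(2)}$; its inductive form unwinds to your operator condition because these fields commute. It then concludes from $m^*$ intertwining them with $X^R$, $X^L$. The only difference is that the paper justifies this description by identifying the product weighting with the one defined by the product Lie filtration $A_k\times A_k$, rather than by your direct triangularity check in restricted product coordinates.

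Two small corrections. The tangency holds because $Y^R$ is tangent to the $s$-fibers and $X^L$ to the $t$-fibers; you have these swapped. Step 3 needs no commutators: by the definition of $\I_\bullet$ and Remark \ref{r:left-right}, if $P,Q$ have $A_\bullet$-orders at most $a,b$ with $a+b\le k-1$, then $Q^RP^Lf\in \I_{k-a-b}\subset \I_M$ directly.
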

\begin{proof}
We must show that $m\colon G^{(2)}=G\times_MG\rightarrow G$ is a weighted morphism. Recall how the weighting $\I^{(2)}_\bullet$ of $G^{(2)}$ is defined (Theorem \ref{t:fiber-product}): take the product weighting $\I^{G\times G}_\bullet$ on $G\times G$ and restrict to the weighted submanifold $G^{(2)}\subset G\times G$, resulting in a weighting of $G^{(2)}$ along $M\simeq M^{(2)}\subset G^{(2)}$. The ideal $\I^{G\times G}_k$ is generated by $\sum_{i+j=k} \pi_1^*\I_i\cdot \pi_2^*\I_j$; this is the weighting of $G\times G$ along $M\times M$ defined by the product Lie filtration $(A\times A)_k:=A_k\times A_k$, hence $\I^{G\times G}_1=\I_{M\times M}$ and for $k>1$ we have, inductively,
\[ \I^{G\times G}_k=\{f\in \I_{M\times M}\mid \forall i<k, \forall X\in \A_i, (X^R\times 0) f, (0\times X^L)f \in \I^{G\times G}_{k-i}\} \]
where $X^R\times 0$ (resp. $0\times X^L$) denotes the vector field on $G\times G$ given by $X^R$ on the first factor of the product $G\times G$ (resp. second factor of the product $G\times G$), and here we have chosen to use a mix of right and left invariant vector fields according to Remarks \ref{r:left-right}, \ref{r:left-right2}. This choice (of left/right) is convenient since $(X^R\times 0)$, $(0\times X^L)$ are tangent to $G^{(2)}$, hence the same conditions define the weighting $\I^{(2)}_\bullet$ of $G^{(2)}$, that is $\I^{(2)}_1=\I_{M^{(2)}}$ and for $k>1$ we have, inductively,
\[ \I^{(2)}_k=\{f\in \I_{M^{(2)}}\mid \forall i<k, \forall X\in \A_i, (X^R\times 0) f, (0\times X^L) f \in \I^{(2)}_{k-i}\}. \]
Now $m^*\I_1=m^*\I_M\subset \I_{M^{(2)}}=\I^{(2)}_1$ because $m\colon M^{(2)}\subset G^{(2)}\rightarrow M\subset G$ is the identity. Let $k>1$ and suppose inductively we have shown that $m^*\I_j\subset \I_j^{(2)}$ for $j<k$. Let $f\in \I_k$ and we must show $m^*f$ belongs to $\I^{(2)}_k$. If $X\in \A_i$ then
\[ (X^R\times 0)m^*f=m^*(X^Rf)\in m^*\I_{k-i}\subset \I^{(2)}_{k-i}\]
by induction, and similarly
\[ (0\times X^L)m^*f=m^*(X^Lf)\in m^*\I_{k-i}\subset \I^{(2)}_{k-i} \]
by induction. Thus $m^*f\in \I^{(2)}_k$ as required.
\end{proof}

\section{The Lie filtration associated to a multiplicative weighting}
We continue to use notation introduced in the previous section, and in particular $G\rightrightarrows M$ is a Lie groupoid with Lie algebroid $A$. In this section we prove that every multiplicative weighting of $G$ along $M$ comes from a unique Lie filtration, in the manner explained in the previous section. As a result there is a one-one correspondence between Lie filtrations of $A$, and multiplicative weightings of $G$ along $M$.

\begin{definition}
Let $\I_\bullet$ be a weighting of $G$ along $M$. For $k=1,...,r$, let $A_k=\nu(G,M)_{(k)}\subset \nu(G,M)=A$. We refer to $A_\bullet$ as the \emph{filtration of $A$ associated to the weighting $\I_\bullet$}. Let $\A_k$ be the sheaf of local sections of $A_k$.
\end{definition}

\begin{definition}
Let $\I_\bullet$ be a weighting of $G$ along $M$ and by convention set $\I_j=C^\infty_G$ for $j\le 0$. Let $P \in \U\A$. If $P^L \I_j\subset \I_{j-k}$ for all $j$, then $P^L$ is said to have $\I_\bullet$-\emph{order at most $k$}.
\end{definition}
\begin{theorem}
\label{t:mult-wt-Lie-filt}
Let $\I_\bullet$ be a multiplicative weighting of $G$ along $M$, and let $A_\bullet$ be the filtration of $A$ associated to the weighting $\I_\bullet$. Then
\begin{enumerate}[a)]
\item If $X \in \A$ and $1\le k\le r$, then $X\in \A_k$ if and only if $X^L$ has $\I_\bullet$-order at most $k$.
\item If $P \in \U\A$ is a monomial with $A_\bullet$-order at most $k$, then $P^L$ has $\I_\bullet$-order at most $k$.
\item $A_\bullet$ is a Lie filtration of $A$.
\item The weighting $\I_\bullet'$ defined by the Lie filtration $A_\bullet$ coincides with $\I_\bullet$.
\end{enumerate}
\end{theorem}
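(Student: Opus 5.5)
The plan is to prove a) first. Parts b), c), d) then follow from a) combined with earlier results.

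\emph{Part a), the direction $X\in\A_k\Rightarrow$ $\I_\bullet$-order at most $k$.} I would express $X^L$ through the multiplication map. Use the embedding $\iota\colon G\to G^{(2)}$, $\iota(g)=(g,u(s(g)))$. This is a weighted morphism, since its components are $\tn{id}$ and $u\circ s$, both weighted, and $G^{(2)}\subset G\times G$ carries the weighting of Theorem \ref{t:fiber-product}.

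Next, choose near a point of $M$ a vector field $Y$ on $G$ that is tangent to the $t$-fibres, restricts along $M$ to $X$ (viewing $A=\ker(\d t)|_M$), and satisfies $Y\I_j\subset\I_{j-k}$ for all $j$. To build $Y$, I would use Remark \ref{r:wt-sub-loc-sec}: since $t$ is a weighted submersion, there are weighted charts in which $t$ is a coordinate projection. Because $X\in T_MG_{(k)}\cap\ker \d t$, along $M$ it is a $C^\infty_M$-combination of coordinate fields $\partial/\partial x^i$ in fibre directions with $w_i\le k$. Extending the coefficients to be independent of the positive-weight coordinates gives $Y$, and such a field lowers weighted order by at most $k$ by the local form \eqref{e:wt-loc-form}.

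The field $0\times Y$ on $G\times G$ is tangent to $G^{(2)}$ and lowers product-weighted order by at most $k$. Because $G^{(2)}$ is a weighted submanifold with a chart adapted to it, the same holds for its restriction to $G^{(2)}$. Differentiating $f(g\,h)$ in $h$ at $h=1_{s(g)}$ in the direction $X$ gives
\[ X^Lf=\iota^*\big((0\times Y)\,m^*f\big). \]
So for $f\in\I_j$, multiplicativity gives $m^*f\in\I^{(2)}_j$, hence $(0\times Y)m^*f\in\I^{(2)}_{j-k}$, and finally $X^Lf\in\I_{j-k}$. Patching local choices with a partition of unity on $M$ is harmless, since $X^L$ is globally defined.

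\emph{Part a), the converse.} If $X^L$ has $\I_\bullet$-order at most $k$, then $X^L\I_{k+1}\subset\I_1=\I_M$. Hence $X$, viewed at points of $M$, annihilates $\I_{k+1}$, so $X\in T_MG_{(k)}$. Passing to $\nu(G,M)=A$ gives $X\in A_k$.

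\emph{Part b).} This is immediate from a), since $\I_\bullet$-orders add under composition of operators.

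\emph{Part c).} For $X\in\A_i$ and $Y\in\A_j$, we have $[X,Y]^L=[X^L,Y^L]=X^LY^L-Y^LX^L$. By b) each term has $\I_\bullet$-order at most $i+j$. By the converse in a), $[X,Y]\in\A_{i+j}$; this is trivial when $i+j\ge r$.

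\emph{Part d).} By Proposition \ref{p:operator-condition}, $\I'_k$ consists of those $f$ with $P^Lf\in\I_M$ for all $P$ of $A_\bullet$-order at most $k-1$. By b), every $f\in\I_k$ satisfies this, so $\I_k\subset\I'_k$. Theorem \ref{t:wt-assoc-Lie-filt} says $\I'_\bullet$ induces the filtration $A_\bullet$ on $\nu(G,M)$, as $\I_\bullet$ does, so both weightings have the same weight sequences.

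Take a weighted chart $(x^i)$ for $\I_\bullet$. Then $x^i\in\I_{w_i}\subset\I'_{w_i}$. By Proposition \ref{p:criterion-for-wt-chart}, applied to the weighting $\I'_\bullet$, this chart is weighted for $\I'_\bullet$ near $M$. Hence $\I_k=\I'_k$ near $M$. Away from $M$ both sequences are all of $C^\infty_G$, because $\I_1=\I'_1=\I_M$. Therefore $\I_\bullet=\I'_\bullet$.

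\emph{Main obstacle.} I expect the hardest step to be the construction of $Y$ in a) together with checking that $0\times Y$ restricted to $G^{(2)}$ has product-weighted order at most $k$. Here I would lean on the adapted charts from Remark \ref{r:wt-sub-loc-sec} and Theorem \ref{t:fiber-product}.
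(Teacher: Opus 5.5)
Your proof is correct. For the converse in a), and for b), c) and d), it follows the paper essentially verbatim. The genuine difference is in the forward direction of a).

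\emph{The paper's route.} It takes an arbitrary local vector field $\hat X$ with $\hat X\I_j\subset\I_{j-k}$ whose normal projection along $M$ is $X$, with no tangency condition imposed. It writes $(X^Lf)(g)=\partial_\epsilon|_0\,(m^*f)(g,s(g)\exp(\epsilon X))$ and expands $m^*f=\sum c_{ll'}\,\pi_1^*f_l\,\pi_2^*f'_{l'}$ using multiplicativity. It then runs a Leibniz-rule case analysis. The key trick is that for $l'>0$ one has $f'_{l'}\in\I_M$, so $X^Lf'_{l'}|_M=\hat X f'_{l'}|_M$ sees only the normal component. Weightedness of $u\circ s$ then finishes.

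\emph{Your route.} You choose $Y$ tangent to the $t$-fibres. Then $0\times Y$ is tangent to $G^{(2)}$ and $X^Lf=\iota^*\big((0\times Y)m^*f\big)$ holds exactly. The conclusion is a three-step chain: $m^*$ preserves weighted order, $0\times Y$ lowers it by at most $k$, and $\iota^*$ preserves it. There is no expansion of $m^*f$ and no case analysis. The price is extra input in building $Y$: a weighted chart in which $t$ is a coordinate projection. You also need the easy check that order-lowering passes to the weighted submanifold $G^{(2)}$, which is the same reasoning the paper uses in Theorem \ref{t:Lie-filt-wt-mult}.

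\emph{Two small points.} First, you should say why the $t$-coordinates in such a chart are exactly the weight-$0$ ones. If $t^*z^i=x^i$, then $x^i|_M=z^i\notin\I_M$, so $w_i=0$; since there are only $\dim M$ weight-$0$ coordinates, the fibre coordinates are precisely the positive-weight ones. Second, you can avoid citing Hudson's normal form (Remark \ref{r:wt-sub-loc-sec}). Take any weighted chart and replace its weight-$0$ coordinates $x^a$ by $t^*(x^a|_M)$. The differentials still form a basis at points of $M$, so Proposition \ref{p:criterion-for-wt-chart} makes the new chart weighted near $M$, and in it $t$ is a coordinate projection. This keeps your argument as self-contained as the paper's.
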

\begin{proof}
a) Suppose $X^L$ has $\I_\bullet$-order at most $k$. Recall that $A_k=\nu(G,M)_{(k)}$ consists of normal vectors such that the corresponding normal directional derivative annihilates $\I_{k+1}$. By definition of $\I_\bullet$-order, $f\in \I_{k+1}\Rightarrow X^Lf\in \I_M \Rightarrow X^Lf|_M=0$. Hence the normal projection $\nu(X^L)=X$ is a section of the subbundle $\nu(G,M)_{(k)}=A_k\subset A=\nu(G,M)$.

Conversely suppose $X \in \A_k$ and we must show that $X^L$ has $\I_\bullet$-order at most $k$. First, we claim that there exists a vector field $\hat{X}$ defined on an open subset of $G$ such that $\hat{X}\I_j\subset \I_{j-k}$ for all $j$, and such that the normal projection $\nu(\hat{X}|_M)=X$. To construct $\hat{X}$ one can work locally and then patch together using a partition of unity, so we may as well assume that the domain of the local section $X$ is an open subset $U\subset M$ contained in the intersection of a local weighted coordinate chart for $G$ with the unit space $M$. Elements of $\A_k(U)$ are $C^\infty(U)$-linear combinations of normal projections $\nu(\partial/\partial x^i|_M)$ of coordinate vector fields for coordinates $x^i$ of strictly positive weight at most $k$. Hence $X=\sum_{1\le w_i\le k} f_i\nu(\partial/\partial x^i|_M)$ for smooth functions $f_i$ on $U$. Extend $f_i$ to a smooth function $\hat{f}_i$ on the domain of the coordinate chart. Then $\hat{X}=\sum_{1\le w_i\le k} \hat{f}_i(\partial/\partial x^i)$ has the desired properties.

Let $f\in \I_j$ and we must show $X^Lf\in \I_{j-k}$. If $j\le k$ this is automatic since $\I_{j-k}=C^\infty_G$, so assume $j>k$. Recall $m\colon G^{(2)}\rightarrow G$ denotes the groupoid multiplication. We shall write $\exp(tX)$ for the 1-parameter family of local bisections of $G$ generated by $X$: it need not be defined globally for any $t>0$; however for any given $p \in M$, $p\exp(tX)$ is defined for $t$ sufficiently small (and is given by the time $t$ flow of $X^L$ applied to $p$), which suffices for our purposes. Observe that
\[ (X^Lf)(g)=\frac{\partial}{\partial t}\bigg|_0 f(gs(g)\exp(tX))=\frac{\partial}{\partial t}\bigg|_0 (m^*f)(g,s(g)\exp(tX)).\]
By multiplicativity 
\[ m^*f=\sum_{l+l'\ge j} c_{ll'}(\pi_1^*f_l)(\pi_2^*f'_{l'}) \] 
where $f_l\in \I_l$, $f_{l'}'\in \I_{l'}$, $c_{ll'}\in C^\infty(G^{(2)})$ and $\pi_1,\pi_2\colon G^{(2)}=G\times_M G\rightarrow G$ are the projection maps to the two factors. Therefore
\[ (X^Lf)(g)=\frac{\partial}{\partial t}\bigg|_0\sum_{l+l'\ge j} c_{ll'}(g,s(g)\exp(tX))f_l(g)f'_{l'}(s(g)\exp(tX)). \]
Applying the Leibniz rule to this expression, the result is a sum of two contributions coming from applying the $t$-derivative to $c_{ll'}$ and to $f'_{l'}$ respectively, and we analyze each contribution in turn. When the derivative $\partial/\partial t|_0$ is applied to $c_{ll'}$ we obtain
\[ \sum_{l+l'\ge j} ((0\times X^L)c_{ll'})(g,s(g))f_l(g)f'_{l'}(s(g)). \]
If $l'>0$ then $f'_{l'}(s(g))=0$. Otherwise if $l'=0$ then $l\ge j$ $\Rightarrow$ $f_l\in \I_j$, and hence this expression belongs to $\I_j\subset \I_{j-k}$. 

When the derivative $\partial/\partial t|_0$ is applied to $f'_{l'}$ we obtain
\begin{equation} 
\label{e:2nd-contrib}
\sum_{l+l'\ge j} c_{ll'}(g,s(g))f_l(g)(X^Lf'_{l'})(s(g)). 
\end{equation}
If $l'=0$ then $l\ge j$ and hence the corresponding term of \eqref{e:2nd-contrib} belongs to $\I_j\subset \I_{j-k}$. Otherwise if $l'>0$ then $X^Lf'_{l'}|_M=\hat{X}f'_{l'}|_M$, therefore the $l,l'$ summand equals
\begin{equation} 
\label{e:llprime-summand}
c_{ll'}(g,s(g))f_l(g)(\hat{X}f'_{l'})(s(g))=c_{ll'}(g,s(g))f_l(g)(s^*u^*(\hat{X}f'_{l'}))(g). 
\end{equation}
The vector field $\hat{X}$ had the property that $\hat{X} \I_{l'}\subset \I_{l'-k}$ for all $l'$, thus $\hat{X}f'_{l'}\in \I_{l'-k}$. Recall $u\colon M\hookrightarrow G$, $s\colon G\rightarrow M$ are both weighted morphisms, hence the composition $u\circ s\colon G\rightarrow G$ is a weighted morphism, and therefore $s^*u^*(\hat{X} f'_{l'})\in \I_{l'-k}$. Since $f_l\in \I_l$ and $\I_l\I_{l'-k}\subset \I_{l+l'-k}$, it follows that \eqref{e:llprime-summand} belongs to $\I_{l+l'-k}\subset \I_{j-k}$. Hence each summand in \eqref{e:2nd-contrib} belongs to $\I_{j-k}$, and this completes the argument that $X^Lf \in \I_{j-k}$.
\bigskip

\noindent b) This follows immediately from a) and the definition of $A_\bullet$-order. 
\bigskip

\noindent c) Let $X\in \A_i$ and $Y\in \A_j$. By part a), $X^L,Y^L$ have $\I_\bullet$-orders at most $i,j$ respectively. Hence $X^LY^L$, $Y^LX^L$ both have $\I_\bullet$-order at most $i+j$, and thus $[X,Y]^L=[X^L,Y^L]$ has $\I_\bullet$-order at most $i+j$. By part a) again, $[X,Y]\in \A_{i+j}$.
\bigskip

\noindent d) Let $\I_\bullet'$ be the weighting defined by the Lie filtration $A_\bullet$ (Theorem \ref{t:wt-assoc-Lie-filt}). We have $\I_1=\I_M=\I_1'$. We must show that $\I_k=\I_k'$ for $k=2,3,...$. Both weightings have the same filtration $A_\bullet$ of $A=\nu(G,M)$, and hence the same weight sequence. Let $f \in \I_k$. Let $P \in \U\A$ be a monomial with $A_\bullet$-order at most $k-1$. By part b), $P^L$ has $\I_\bullet$-order at most $k-1$, hence $P^Lf\in \I_M$; as this holds for all $P$ with $A_\bullet$-order at most $k-1$, $f\in \I_k'$ by Proposition \ref{p:operator-condition}, and therefore $\I_k\subset \I_k'$. In particular if $\phi=(x^1,...,x^n)$ are $\I_\bullet$-weighted local coordinates with weights $w_1,...,w_n$ then $x^i\in \I'_{w_i}$. But this means we have a coordinate chart $\phi=(x^1,...,x^n)$ for which the coordinates have the correct weights (with multiplicities) to be an $\I_\bullet'$-weighted coordinate chart, hence by Proposition \ref{p:criterion-for-wt-chart}, $\phi=(x^1,...,x^n)$ is an $\I_\bullet'$-weighted coordinate chart as well (at least near $M$), and it follows that the two weightings coincide.
\end{proof}

\printbibliography
\end{document}